\documentclass[11 pt]{amsart}

\usepackage[utf8]{inputenc}
\usepackage{tikz, tikz-3dplot}
\usepackage{amsmath, amsthm, amssymb,graphics }
\usetikzlibrary{arrows,shapes,positioning}
\usetikzlibrary{intersections}
\usetikzlibrary{decorations.pathreplacing,calligraphy}

\usepackage{amsmath,amsthm,amssymb,amscd,color, xcolor,mathtools,url,tikz}
\usepackage{bbm}
\usepackage{pgfplots}
\pgfplotsset{compat=newest}
\usepgfplotslibrary{fillbetween}

\evensidemargin 0.0in \oddsidemargin 0.0in \textwidth 6.5in
\topmargin  -0.2in \textheight  9.0in \overfullrule = 0pt

\newtheorem{thm}{Theorem}[section]
 \newtheorem{cor}[thm]{Corollary}
 \newtheorem{lem}[thm]{Lemma}
 \newtheorem{prop}[thm]{Proposition}

\newtheorem{conj}{Conjecture}

 \theoremstyle{definition}
 
 \theoremstyle{remark}
 \newtheorem{rem}[thm]{Remark}
 
 \numberwithin{equation}{section}

\setcounter{tocdepth}{1}
\def\les{{\;\lessapprox}\;}\def\ges{{\;\gtrapprox}\;}
\def\be#1 {\begin{equation} \label{#1}}
\def\ee{\end{equation}}

\def\sqw{\hbox{\rlap{\leavevmode\raise.3ex\hbox{$\sqcap$}}$%
\sqcup$}}
\def\findem{\ifmmode\sqw\else{\ifhmode\unskip\fi\nobreak\hfil
\penalty50\hskip1em\null\nobreak\hfil\sqw
\parfillskip=0pt\finalhyphendemerits=0\endgraf}\fi}

\newcommand{\R}{\mathbb R}

\newcommand{\C}{\mathbb C}
\newcommand{\s}{\mathcal S}

\author[C. Demeter]{Ciprian Demeter}
\address{Ciprian Demeter, Department of Mathematics, 831 E Third Street, Bloomington, Indiana, U.S.A.}
\email{demeterc@iu.edu}

\author[P. Germain]{Pierre Germain}
\address{Pierre Germain, Department of Mathematics, Huxley Building, South Kensington Campus,
Imperial College London, London SW7 2AZ, United Kingdom}
\email{pgermain@ic.ac.uk}

\title{$L^2$ to $L^p$ bounds for spectral projectors on the Euclidean two-dimensional torus}

\subjclass[2000]{11L07, 11P21, 42B15}

\keywords{Spectral projectors, $l^2$ decoupling, lattice points in annuli, exponential sums}

\begin{document}

\begin{abstract} We consider spectral projectors associated to the Euclidean Laplacian on the two-dimensional torus, in the case where the spectral window is narrow. Bounds for their $L^2$ to $L^p$ operator norm are derived, extending the classical result of Sogge; a new question on the convolution kernel of the projector is introduced. The methods employed include $\ell^2$ decoupling, small cap decoupling, and estimates of exponential sums. 
\end{abstract}

\maketitle

\tableofcontents 

\section{Introduction}

\subsection{Eigenfunctions of the Laplacian on the two-dimensional torus}

We consider the torus
$$
\mathbb{T}^2 = \mathbb{R}^2 / \mathbb{Z}^2,
$$
on which Fourier series are given by
$$
f(x) = \sum_{k \in \mathbb{Z}^2} \widehat{f}_k e^{2\pi i k \cdot x}, \qquad \widehat{f}_k = \int_{\mathbb{T}^2} f(x) e^{-2\pi i k \cdot x} \,dx.
$$

A classical question is to estimate the $L^p$ norms of eigenfunctions of the Laplacian: if $\varphi$ is such that $- \Delta \varphi = \lambda^2 \varphi$ on $\mathbb{T}^2$, and if it is normalized in $L^2$, what is the optimal bound on $\| \varphi \|_{L^p}$, for $p \geq 2$? What should be expected is unclear (the question is asked in~\cite{Bourgain1}), but one possibility is that
\begin{equation}
\label{macareux}
\| \varphi \|_{L^p} \lesssim_p 1 \quad \mbox{if $p<\infty$, while} \quad \| \varphi \|_{L^\infty} \lesssim_\epsilon \lambda^\epsilon.
\end{equation}
This was proved for $p=4$ by Cooke~\cite{Cooke} and Zygmund~\cite{Zygmund}; and for $p=\infty$, the bound $\lambda^{\frac{C}{\log \log \lambda}}$ follows from the divisor bound in Gaussian integers.
Proving optimal bounds for $\| \varphi \|_{L^p}$ for any $p \in (4,\infty)$ appears to be a very hard problem, which can be relaxed by considering spectral projectors on narrow spectral windows, to which we now turn.

\subsection{A conjecture on spectral projectors on narrow windows} For $\lambda>2$ and $\delta<1$, 
the spectral projector on the range $(\lambda - \delta, \lambda + \delta)$ for the square root of the Euclidean Laplacian is given through functional calculus by the formula
\begin{align*}
P_{\lambda,\delta} = \mathbf{1}_{(\lambda - \delta,\lambda + \delta)} (\sqrt{-\Delta}) \qquad \mbox{or} \qquad P_{\lambda,\delta} f (x) = \sum_{k \in \mathcal{A}_{\lambda ,\delta}} \widehat{f}_k e^{2\pi i k \cdot x},
\end{align*}
where $\mathcal{A}_{\lambda,\delta}$ is the annulus with inner radius $\lambda-\delta$ and width $2 \delta$:
$$
\mathcal{A}_{\lambda,\delta} = \{ x \in \mathbb{R}^2, \, \lambda-\delta < |x| <\lambda + \delta \}.
$$
Two consecutive eigenvalues of $\sqrt{-\Delta}$ close to $\lambda$ are at least $\sim \frac{1}{2} \lambda^{-1}$ apart. Thus, if $\delta = \frac{1}{4} \lambda^{-1}$, bounding $P_{\lambda,\delta}$ is equivalent to bounding eigenfunctions of the Laplacian.

In the present paper, we consider the following conjecture, which focuses on the case where the spectral window is at least slightly larger than $\lambda^{-1}$.

\begin{conj}[\cite{GermainMyerson1}] If $p \geq 2$, the operator norm of $P_{\lambda,\delta}$ satisfies for any $\kappa>0$
\label{conjproj}
\begin{equation}
\label{chardonneret}
\| P_{\lambda,\delta} \|_{L^{2} \to L^p} \lesssim_{\kappa,p}  \lambda^{\frac{1}{2} - \frac{2}{p}} \delta^{\frac 12} + (\lambda \delta)^{\frac{1}{4} - \frac{1}{2p}}  \qquad \mbox{if} \quad \delta > \lambda^{-1+\kappa}
\end{equation}
or in other words
$$
\| P_{\lambda,\delta} \|_{L^{2} \to L^p}
\lesssim_{\kappa,p} \left\{
\begin{array}{ll}
(\lambda \delta)^{\frac{1}{4} - \frac{1}{2p}} & \mbox{if $p \leq 6$} \\
(\lambda \delta)^{\frac{1}{4} - \frac{1}{2p}} & \mbox{if $p \geq 6$, $\delta \leq \lambda^{-1+\frac{8}{p+2}}$} \\
\lambda^{\frac{1}{2} - \frac{2}{p}} \delta^{\frac 1 2} & \mbox{if $p \geq 6$, $\delta \geq \lambda^{-1+\frac{8}{p+2}}$}.
\end{array}
\right.
$$
The conjecture is said to be satisfied with $\epsilon$ loss if
\begin{equation}
\| P_{\lambda,\delta} \|_{L^{2} \to L^p} \lesssim_{\kappa,p,\epsilon} \lambda^\epsilon \left[  \lambda^{\frac{1}{2} - \frac{2}{p}} \delta^{\frac 12} + (\lambda \delta)^{\frac{1}{4} - \frac{1}{2p}}  \right]\qquad \mbox{if} \quad \delta > \lambda^{-1+\kappa}. 
\end{equation}
\end{conj}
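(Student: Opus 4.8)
The plan is to play two reductions against each other and to split the argument at $p=6$ and, for $p\ge 6$, at the threshold $\delta=\lambda^{-1+8/(p+2)}$ at which the two terms of \eqref{chardonneret} coincide. The first reduction is $TT^*$: since $P_{\lambda,\delta}$ is a self-adjoint projection, $\|P_{\lambda,\delta}\|_{L^2\to L^p}^2=\|P_{\lambda,\delta}\|_{L^{p'}\to L^p}=\|g\mapsto g*D_{\lambda,\delta}\|_{L^{p'}\to L^p}$, where $D_{\lambda,\delta}(x)=\sum_{k\in\mathcal A_{\lambda,\delta}}e^{2\pi i k\cdot x}$ is the convolution kernel of the projector, so Young's inequality reduces matters to bounding $\|D_{\lambda,\delta}\|_{L^{p/2}(\mathbb T^2)}$; this route is efficient only when $\delta$ is large, since $\|D_{\lambda,\delta}\|_{L^{p/2}}\ge\|D_{\lambda,\delta}\|_{L^2}=(\#\mathcal A_{\lambda,\delta})^{1/2}\gtrsim(\lambda\delta)^{1/2}$ already when $p\ge 4$. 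The second reduction, appropriate for small $\delta$, is to decompose $\mathcal A_{\lambda,\delta}$ into caps and reassemble via a decoupling inequality. Two endpoints are free: $\|P_{\lambda,\delta}\|_{L^2\to L^2}=1$ trivially, and $\|P_{\lambda,\delta}\|_{L^2\to L^\infty}\le(\#\mathcal A_{\lambda,\delta})^{1/2}\lesssim_\epsilon\lambda^\epsilon(\lambda\delta)^{1/2}$, the count because $\mathcal A_{\lambda,\delta}$ meets $\lesssim\lambda\delta$ circles $\{|x|^2=n\}$, each carrying $\lesssim_\epsilon\lambda^\epsilon$ lattice points by the divisor bound in $\mathbb Z[i]$.

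For $2\le p\le 6$ I would run $\ell^2$ decoupling. Rescaling $\mathcal A_{\lambda,\delta}$ to the unit annulus of width $\delta/\lambda$, applying the Bourgain--Demeter $\ell^2$ decoupling theorem for the circle at the exponent $6$, and transferring the estimate to $\mathbb T^2$, one decomposes the annulus into caps $\theta$ of tangential length $\sim\sqrt{\lambda\delta}$ and thickness $\sim\delta$ and obtains $\|P_{\lambda,\delta}f\|_{L^6}\lesssim_\epsilon\lambda^\epsilon\big(\sum_\theta\|P_\theta f\|_{L^6}^2\big)^{1/2}$. Writing $N_\theta$ for the number of lattice points in $\theta$, Cauchy--Schwarz gives $\|P_\theta f\|_{L^\infty}\le N_\theta^{1/2}\|P_\theta f\|_{L^2}$, hence $\|P_\theta f\|_{L^6}\le N_\theta^{1/3}\|P_\theta f\|_{L^2}$ by log-convexity; summing and using $\sum_\theta\|P_\theta f\|_{L^2}^2=\|P_{\lambda,\delta}f\|_{L^2}^2\le\|f\|_{L^2}^2$ gives $\|P_{\lambda,\delta}f\|_{L^6}\lesssim_\epsilon\lambda^\epsilon(\max_\theta N_\theta)^{1/3}\|f\|_{L^2}$. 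Crucially the cap count is elementary: a cap sits inside a (tilted) rectangle of dimensions $\sqrt{\lambda\delta}$ by $O(\delta)$, so $\#(\Omega\cap\mathbb Z^2)=\mathrm{area}(\Omega)+O(\mathrm{perimeter}(\Omega))$ for convex $\Omega$ yields $N_\theta\lesssim\lambda^{1/2}\delta^{3/2}+\sqrt{\lambda\delta}\lesssim\sqrt{\lambda\delta}$, with no arithmetic loss. Therefore $\|P_{\lambda,\delta}\|_{L^2\to L^6}\lesssim_\epsilon\lambda^\epsilon(\lambda\delta)^{1/6}$, and interpolating with the trivial $L^2$ bound reproduces $(\lambda\delta)^{1/4-1/2p}$ for all $2\le p\le 6$.

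For $p\ge 6$ the two sub-regimes need different and harder inputs. When $\delta\le\lambda^{-1+8/(p+2)}$ one still decouples, but now at the level of small caps (tangential length between $\delta$ and $\sqrt{\lambda\delta}$): I would invoke a small-cap decoupling estimate of Demeter--Guth--Wang type, use again an elementary count for lattice points in a small cap (down to $\lesssim 1$ point per $\delta\times\delta$ plate), and optimise the cap scale against the number-of-caps loss in the small-cap constant, aiming to land on $(\lambda\delta)^{1/4-1/2p}$. When $\delta\ge\lambda^{-1+8/(p+2)}$, the target $\lambda^{1/2-2/p}\delta^{1/2}$ separates the powers of $\lambda$ and $\delta$ and therefore cannot be obtained by interpolating the $L^2$, $L^6$ and $L^\infty$ bounds (all of which are powers of $\lambda\delta$); one must inject genuine arithmetic information, namely sharp bounds for the exponential sums $\sum_{k\in\mathcal A_{\lambda,\delta}}e^{2\pi i k\cdot x}$ and for the $L^q$-norms of the kernel $D_{\lambda,\delta}$ they define. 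I would handle these by the circle method on $\mathbb T^2$: a Farey dissection, with Gauss sums and Poisson summation on the major arcs (where the kernel behaves like a rescaled localised lattice count, producing the $(\lambda\delta)$-spike at the origin and smaller ones at small-denominator rationals) and van der Corput / exponent-pair estimates on the minor arcs.

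The step I expect to be the genuine obstacle is the $p\ge 6$, large-$\delta$ regime --- equivalently, the sharp $L^q$ bound for the convolution kernel. Everything upstream of it is harmonic analysis plus soft lattice-point counting, but this last estimate is an essentially number-theoretic statement about annuli of width $\delta$, and its optimal form collides with the Gauss circle problem and the conjectural exponent pairs; I would therefore expect to establish \eqref{chardonneret} (with $\epsilon$-loss) only on the range of $(p,\delta)$ for which currently available exponent pairs suffice, and a weaker bound --- at worst the interpolant of the $L^6$ and $L^\infty$ estimates --- outside it. A secondary technical point is calibrating the small-cap decoupling in the range $p>6$, $\delta$ small so that the final exponent is exactly $(\lambda\delta)^{1/4-1/2p}$ rather than merely close to it.
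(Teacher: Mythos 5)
Your plan correctly identifies the two pillars of the paper's attack on this conjecture --- $\ell^2$ decoupling on caps for $p\le 6$, and Poisson summation plus exponential sum bounds for $p\ge 6$ at larger $\delta$ --- but it misses the decomposition that makes the caps argument effective, and proposes a small-cap step that cannot work as stated.

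For $p\le 6$ you use the trivial count $N_\theta\lesssim\sqrt{\lambda\delta}$; this gives $(\lambda\delta)^{1/6+\epsilon}$ at $p=6$ and the conjecture with $\epsilon$-loss on $[2,6]$, but no more. The paper's key additional device (Section~\ref{sectioncaps}) is to sort caps into classes $\mathcal C_s$ with $\sim 2^s$ lattice points, to prove the geometric bound $\#\mathcal C_s\lesssim\lambda\delta\, 2^{-2s}$ (Lemma~\ref{lemmacounting2}), and to complement decoupling with a loss-free bilinear $L^4$ estimate (Lemma~\ref{lemmaL4}). Caps with few lattice points have a much better $L^\infty$ bound than $(\lambda\delta)^{1/2}$, and the class with $\sim\sqrt{\lambda\delta}$ points is sparse; exploiting this is what yields the $\epsilon$-loss-free conjecture for $2\le p<6$ and extends the cap route to $p\in(6,10]$ for all $\delta$ and to all $p$ when $\delta>\lambda^{-1/3}$ (Theorem~\ref{thm1}). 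Your uniform count discards exactly this information, and as a sanity check the naive $L^6$--$L^\infty$ interpolation produces $(\lambda\delta)^{1/2-2/p}$, which exceeds the conjectured $(\lambda\delta)^{1/4-1/(2p)}$ for all $p>6$. Relatedly, your small-cap plan for $p\ge 6$ and small $\delta$ is a dead end: the critical exponent for small-cap decoupling of the circle is $\min(2+2/\beta,6)\le 6$, so there is no small-cap decoupling at $L^p$ into $\delta^{-1}$-caps once $p>6$. In the paper, small-cap decoupling is used only for Conjecture~\ref{conjkernel} at $p\le 6$ (Theorem~\ref{dd12}), and feeds back into Conjecture~\ref{conjproj} via Lemma~\ref{equivalence}(ii) only under the opposite constraint $\delta>\lambda^{-1+4/p}$. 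Finally, for $p\ge 6$ and $\delta$ large, your kernel/exponential-sum plan is in the right direction, but the paper's Section~\ref{sectiondyadic} performs a dyadic Poisson summation, uses Stein--Tomas $L^2$--$L^\infty$ interpolation in place of the lossier Young inequality (this matters for $p\in[6,8]$, where Young cannot beat the $(\lambda\delta)^{1/2}$ tail of the kernel), and plugs in M\"uller's two-dimensional van der Corput bound rather than a full Farey dissection, yielding Theorem~\ref{thm2} without $\epsilon$-loss. Your qualitative assessment of this regime --- governed by exponent-pair technology, ultimately colliding with the Gauss circle problem --- agrees with the paper's remarks.
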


\begin{rem} The justification for this conjecture can be found in~\cite{GermainMyerson1}, where two basic examples are considered: the Knapp example, and the spherical example. They lead to the two terms on the right-hand side of~\eqref{chardonneret}.
\end{rem}

\begin{rem} Combining the conjecture with the guess~\eqref{macareux}, it might be the case that the estimate~\eqref{chardonneret} is true for any $\kappa \geq 0$ and $p \in [2, \infty]$, with an implicit constant $C(p,\kappa)$ which only blows up as $(p,\kappa) \to (\infty,0)$.
\end{rem}

\subsection{Known results on Conjecture~\ref{conjproj}}
Conjecture~\ref{conjproj} is known to hold in a number of cases:
\begin{itemize}
\item If $\delta =1$, the conjecture corresponds to the fundamental result of Sogge~\cite{Sogge}, which holds on any Riemannian manifold (see also~\cite{BlairHuangSogge} for a recent extension to logarithmically small spectral windows for general nonpositively curved manifolds, including in particular the torus).
\item If $p=4$, the conjecture was proved for the full range $\lambda^{-1} <\delta<1$ by Bourgain-Burq-Zworski~\cite{BourgainBurqZworski}.
\item If $p \leq 6$, the conjecture with $\epsilon$ loss is a consequence of the $\ell^2$ decoupling of Bourgain-Demeter~\cite{BourgainDemeter3} as was observed in~\cite{GermainMyerson1}.
\item If $p = \infty$, the conjecture for the full range $\lambda^{-1} <\delta<1$ with $\epsilon$ loss follows immediately from the  bound $\lambda^\epsilon$ for the $L^\infty$ norm of eigenfunctions.
\item If $p=\infty$, the conjecture without $\epsilon$ loss would be a consequence of the estimate $N(\lambda) = \pi \lambda^2 + O(\lambda \delta)$ for the number $N(r)$ of integer points in the disc with radius $r$. This corresponds to the Gauss circle problem, for which the best current bound, due to Huxley~\cite{Huxley2}, allows $\delta > \lambda^{-\frac{77}{208} + \epsilon}$, with $\frac{77}{208} \sim 0.37$. Note however that Conjecture~\ref{conjproj} is expected to hold down to $\delta = \lambda^{-1+\kappa}$, while the the estimate $N(\lambda) = \pi \lambda^2 + O(\lambda \delta)$ can only be true for $\delta > \lambda^{-\frac 12}$
\end{itemize}

For the two-dimensional Euclidean cylinder, the conjecture is identical, and it has been proved with $\epsilon$ loss~\cite{GermainMyerson2}. Finally, this conjecture has also been considered in higher dimensions, for which we refer to \cite{Bourgain1,Bourgain2,BourgainDemeter1,BourgainDemeter2,BourgainDemeter3,BourgainShaoSoggeYao,GermainMyerson1,GermainMyerson3,Hickman}.

\subsection{A new conjecture}

The convolution kernel
$$
\Phi_{\lambda,\delta} = \sum_{k \in \mathcal{A}_{\lambda,\delta} \cap \mathbb{Z}^2} e^{2\pi i k \cdot x} \quad \mbox{is such that} \quad P_{\lambda,\delta} f = \Phi_{\lambda,\delta} * f.
$$

\begin{conj}
\label{conjkernel}
If $p \geq 2$ and $\kappa > 0$, then if $\delta > \lambda^{-1 + \kappa}$,
$$
\left\| \Phi_{\lambda,\delta} \right\|_{L^p} \lesssim_{p,\kappa} \lambda^{1 - \frac 2 p} \delta + (\lambda \delta)^{\frac 12}
$$
or in other words,
$$
\left\| \Phi_{\lambda,\delta} \right\|_{L^p} \lesssim 
\left\{
\begin{array}{ll}
(\lambda \delta)^{\frac 12} & \mbox{if $2 \leq p \leq 4$}\\
(\lambda \delta)^{\frac 12} & \mbox{if $p \geq 4$ and $\delta < \lambda^{\frac 4 p -1}$}\\
\lambda^{1 - \frac 2 p} \delta & \mbox{if $p \geq 4$ and $\delta > \lambda^{\frac 4 p -1}$}.
\end{array}
\right.
$$
\end{conj}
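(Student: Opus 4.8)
The plan is to separate the two terms on the right-hand side, which reflect respectively the size of $\Phi_{\lambda,\delta}$ near the origin (the Knapp term $\lambda^{1-\frac{2}{p}}\delta$) and its typical size (the $L^2$ term $(\lambda\delta)^{\frac12}$), and to treat the ranges $2\le p\le 4$ and $p\ge 4$ differently. First I would record the reformulation $\|\Phi_{\lambda,\delta}\|_{L^p}=\|P_{\lambda,\delta}\|_{L^1\to L^p}$, which holds because $P_{\lambda,\delta}f=\Phi_{\lambda,\delta}\ast f$ and convolution against a fixed kernel attains its $L^1\to L^p$ operator norm (tested against an approximate identity); thus Conjecture~\ref{conjkernel} is exactly the $L^1\to L^p$ analogue of Conjecture~\ref{conjproj}. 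Two elementary endpoints: $\|\Phi_{\lambda,\delta}\|_{L^2}^2=\#(\mathcal A_{\lambda,\delta}\cap\mathbb Z^2)=\sum_{(\lambda-\delta)^2<m<(\lambda+\delta)^2}r_2(m)$, where $r_2(m)$ is the number of ways of writing $m$ as a sum of two integer squares, and since $r_2(m)\lesssim_\epsilon m^\epsilon$ and the interval has length $\approx 4\lambda\delta\ge 4\lambda^\kappa$, this is $\lesssim_\epsilon\lambda^\epsilon\lambda\delta$ --- the case $p=2$; and $\|\Phi_{\lambda,\delta}\|_{L^\infty}=\Phi_{\lambda,\delta}(0)=\#(\mathcal A_{\lambda,\delta}\cap\mathbb Z^2)\approx\lambda\delta$ --- the case $p=\infty$. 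Moreover the first term is essentially free wherever Conjecture~\ref{conjproj} is available: since $P_{\lambda,\delta}=P_{\lambda,\delta}\circ P_{\lambda,\delta}$ and $\|P_{\lambda,\delta}\|_{L^1\to L^2}=\big(\#(\mathcal A_{\lambda,\delta}\cap\mathbb Z^2)\big)^{1/2}\approx(\lambda\delta)^{\frac12}$, Conjecture~\ref{conjproj} would give $\|\Phi_{\lambda,\delta}\|_{L^p}\le\|P_{\lambda,\delta}\|_{L^2\to L^p}(\lambda\delta)^{\frac12}\lesssim\lambda^{1-\frac{2}{p}}\delta+(\lambda\delta)^{\frac{3}{4}-\frac{1}{2p}}$. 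Hence the whole difficulty is to replace the second term $(\lambda\delta)^{\frac{3}{4}-\frac{1}{2p}}$ by $(\lambda\delta)^{\frac12}$, i.e.\ to prove genuine square-root cancellation for the flat exponential sum $\Phi_{\lambda,\delta}$.

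For $p=4$ one has $\|\Phi_{\lambda,\delta}\|_{L^4}^4=\sum_{v\in\mathbb Z^2}r(v)^2$ with $r(v)=\#\{(k,k')\in(\mathcal A_{\lambda,\delta}\cap\mathbb Z^2)^2:\ k-k'=v\}$, so the statement at $p=4$ is the additive-energy bound $\sum_v r(v)^2\lesssim_\epsilon\lambda^\epsilon(\lambda\delta)^2$. The diagonal contributes $r(0)^2\approx(\lambda\delta)^2$; for $v\neq0$ I would bound $r(v)$ by the number of lattice points in the lens $\mathcal A_{\lambda,\delta}\cap(v+\mathcal A_{\lambda,\delta})$, which lies in a region of tangential length $\lesssim\min\!\big(\frac{\lambda\delta}{|v|},(\lambda\delta)^{1/2}\big)$ and radial width $\lesssim\delta$, whence $r(v)\lesssim\min\!\big(\frac{\lambda\delta}{|v|},(\lambda\delta)^{1/2}\big)+1$, and then sum dyadically in $|v|$, using $\sum_v r(v)=\big(\#(\mathcal A_{\lambda,\delta}\cap\mathbb Z^2)\big)^2$ (together with standard bounds for the number of lattice points in short annular arcs) to control how often $r(v)$ is large; this is close to the Bourgain--Burq--Zworski mechanism at $p=4$. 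Interpolating with the case $p=2$ then gives $\|\Phi_{\lambda,\delta}\|_{L^p}\lesssim_\epsilon\lambda^\epsilon(\lambda\delta)^{\frac12}$ for all $2\le p\le4$, which is the conjecture there.

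For $p>4$ the two terms genuinely compete, and one cannot reach the second one by interpolating against $L^\infty$ (that loses a factor $\delta^{-2/p}$). Heuristically the first term is the contribution of $B(0,\lambda^{-1})$ --- where $|\Phi_{\lambda,\delta}|\lesssim\lambda\delta$ --- and, more generally, of ``major arcs'' around rationals $a/q$ with $q$ small, while the second term is the ``minor-arc'' contribution where square-root cancellation should prevail. To make this rigorous I would decouple $\Phi_{\lambda,\delta}$ into the arcs $\theta$ of the annulus of a chosen angular width $\lambda^{-\alpha}$ ($\frac12\le\alpha\le1$), each containing $N_\theta=\#(\theta\cap\mathbb Z^2)$ lattice points, with $\|\Phi_\theta\|_{L^p}\le N_\theta^{1-\frac{1}{p}}$ by interpolation between $L^2$ and $L^\infty$ (equality in order when the lattice points of $\theta$ form an arithmetic progression). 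At the parabolic scale $\alpha=\frac12$ the $\ell^2$-decoupling of Bourgain--Demeter gives, for $4\le p\le6$, $\|\Phi_{\lambda,\delta}\|_{L^p}\lesssim_\epsilon\lambda^\epsilon\big(\sum_\theta N_\theta^{2-\frac{2}{p}}\big)^{1/2}$, but the right-hand side is too large when the $N_\theta$ concentrate into $\sim(\lambda\delta)^{1/2}$ ``full'' arcs; it should be replaced by a small-cap ($\ell^p$-type) decoupling for the circle, which would bound it by roughly $\big(\sum_\theta N_\theta^{p-1}\big)^{1/p}$ --- and this last quantity is $\lesssim(\lambda\delta)^{1/2}$ for every distribution of the $N_\theta$ with $\sum_\theta N_\theta\approx\lambda\delta$ and $N_\theta\lesssim(\lambda\delta)^{1/2}$. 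Combining such a small-cap estimate at the right scale $\alpha$ with a lattice-point count for the power sums $\sum_\theta N_\theta^{p-1}$ is what would, in principle, produce $\lambda^\epsilon\big[\lambda^{1-\frac{2}{p}}\delta+(\lambda\delta)^{\frac12}\big]$.

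The main obstacle is precisely this last step, especially for $p\ge6$ (and partly for $4<p<6$, where the operator-norm input is itself insufficient): $\Phi_{\lambda,\delta}$ is governed not only by the curvature of the circle but also by arithmetic concentration near rationals $a/q$ with small $q$ --- a genuine major/minor-arc phenomenon, invisible to $\ell^2$-decoupling --- so one really needs small-cap decoupling for the circle down to (or near) the scale of individual lattice points, and the currently available such estimates carry losses that eat up the conjectured gain over part of the range; the natural alternative would be a direct treatment of the exponential sum $\sum_{k\in\mathcal A_{\lambda,\delta}\cap\mathbb Z^2}e^{2\pi ik\cdot x}$ by the Hardy--Littlewood circle method. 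I would expect Conjecture~\ref{conjkernel} to be within reach for $p$ in a neighbourhood of $2$ (and plausibly up to $p=6$ with $\epsilon$-loss) by the scheme above, and to remain open for large $p$.
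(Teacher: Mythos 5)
This statement is a conjecture, not a theorem, so neither your proposal nor the paper proves it in full; the paper only establishes it with $\epsilon$ loss for $2\le p\le 6$ or $\delta\ge\lambda^{-1/3}$ (Theorem~\ref{mainthmB}, via Theorem~\ref{dd12}, Corollary~\ref{dd13}, Proposition~\ref{dd15}). Read as a strategy outline, your proposal overlaps substantially with Section~\ref{sectionconvolution}: the identity $\|\Phi_{\lambda,\delta}\|_{L^p}=\|P_{\lambda,\delta}\|_{L^1\to L^p}$ and the deduction from $P_{\lambda,\delta}=P_{\lambda,\delta}^2$ reproduce Lemma~\ref{equivalence}(i), the additive-energy reformulation for even $p$ matches Section~\ref{additivelink}, and you correctly identify $\ell^2$ decoupling as too weak for $p>4$ and small-cap decoupling as the right substitute.

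Two concrete issues. First, for $2\le p\le 4$ you propose a lens-counting bound $r(v)\lesssim\min(\lambda\delta/|v|,(\lambda\delta)^{1/2})+1$, but the cap $(\lambda\delta)^{1/2}$ is wrong for small $|v|$: for $|v|\sim 1$ the lens is an intersection of two width-$\delta$ strips crossing at angle $\sim 1/\lambda$, i.e.\ essentially a $\lambda\delta\times\delta$ box, which can contain $\sim\lambda\delta^2$ lattice points once $\delta>\lambda^{-1/2}$, and $\lambda\delta^2>(\lambda\delta)^{1/2}$ as soon as $\delta>\lambda^{-1/3}$. A corrected lens argument may still close, but as written there is a gap; the paper sidesteps it entirely by getting $p\le 6$, $\delta\le\lambda^{-1/3}$ from $\ell^2(L^6)$ decoupling plus collinearity of lattice points in caps of area $<1/2$ (Proposition~\ref{dd15}), and the complementary range from Corollary~\ref{dd13}. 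Second, and more importantly, you are too pessimistic about small-cap decoupling: you claim the available estimates ``carry losses that eat up the conjectured gain'' and predict the conjecture is within reach only ``plausibly up to $p=6$.'' The point of Theorem~\ref{dd12} is that the $\ell^p(L^p)$ and $\ell^q(L^p)$ small-cap decoupling theorems of \cite{DGW,FGM} already suffice, lossless up to $\lambda^\epsilon$, exactly at the critical scale $\delta=\lambda^{4/p-1}$ for $4\le p\le 6$ (where the cap length $\delta^{-1}$ corresponds to $\beta=2/(p-2)$, i.e.\ $p=2+2/\beta$), \emph{provided} one runs a delicate four-case analysis of the lattice-point geometry inside small caps, splitting by the number of points $2^s$, their spacing $2^m$, and the angle $\alpha$ between the lattice line and the cap axis. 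With Theorem~\ref{dd12} in hand, Corollary~\ref{dd13} propagates the conjecture to all $p$ when $\delta\ge\lambda^{-1/3}$, including the large-$p$ range you thought was out of reach, simply by H\"older downward in $p$ and the trivial $L^\infty$ bound upward. The idea you are missing is therefore not a stronger decoupling inequality but the careful combinatorics of how lattice points populate small caps, which is what makes the existing decoupling theorems sharp at the critical exponent.
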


This conjecture is interesting in two respects: first, it is partially equivalent to Conjecture~\ref{conjproj}; and second, it is equivalent to questions on additive energies of subsets of $\mathbb{Z}^2$. See Section~\ref{sectionconvolution} for more on these two points. This conjecture is based on the two following observations, which also show that the conjecture is optimal, if true.
\begin{itemize}
\item A naive counting argument shows that, for a given $\delta$, there exists in any interval of length $1$ a $\lambda$ such that $\# \mathcal{A}_{\lambda,\delta} \cap \mathbb{Z}^2 \gtrsim \lambda \delta$. For this choice of $\delta$ and $\lambda$, H\"older's inequality and Parseval's theorem imply that
$$
\| \Phi_{\lambda,\delta} \|_{L^p} \gtrsim \| \Phi_{\lambda,\delta} \|_{L^2} = \left( \# \mathcal{A}_{\lambda,\delta}' \right)^{\frac 12} \gtrsim (\lambda \delta)^{\frac 12}.
$$
\item Still considering $\lambda$ and $\delta$ such that $\# \mathcal{A}_{\lambda,\delta}  \cap \mathbb{Z}^2 \gtrsim \lambda \delta$, Bernstein's inequality gives that
$$
\| \Phi_{\lambda,\delta} \|_{L^p} \gtrsim \lambda^{-\frac{2}{p}} \| \Phi_{\lambda,\delta} \|_{L^\infty} = \lambda^{-\frac 2 p} \# \mathcal{A}_{\lambda,\delta}' \gtrsim \lambda^{1 - \frac{2}{p}} \delta.
$$
\end{itemize}

Note that the conjecture cannot hold all the way to $\kappa =0$ and $p \geq 2$, since it would imply a uniform bound on the number of lattice points on a circle, which is known to fail.

\subsection{Main results} Our main results verify the conjectures for various ranges in $(p,\lambda,\delta).$

\begin{thm} \label{mainthmA} \begin{itemize}
\item[(i)] Conjecture~\ref{conjproj} holds if $2 \leq p <6$, or $p \geq6$ and $\delta > \min \left( \lambda^{- \frac{1 - \frac{6}{p}}{3 - \frac 2p}} , \lambda^{- \frac{10 - \frac{64}{p}}{29 - \frac{14}{p}}+\epsilon} \right)$.

\item[(ii)]
Conjecture~\ref{conjproj} holds with $\epsilon$ loss if $2 \leq p \leq 10$, or $\delta > \lambda^{-\frac 13}$, or $p=\infty$.
\end{itemize}
\end{thm}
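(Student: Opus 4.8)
To prove Theorem~\ref{mainthmA} --- that is, Conjecture~\ref{conjproj} in the stated ranges --- the plan is to bound, equivalently, $\|\sum_{k \in \mathcal{A}_{\lambda,\delta} \cap \mathbb{Z}^2} a_k e^{2\pi i k \cdot x}\|_{L^p(\mathbb{T}^2)}$ over $\ell^2$-normalised coefficient sequences $a$, exploiting that the two terms of \eqref{chardonneret} have different origins: the spherical term $(\lambda\delta)^{\frac{1}{4} - \frac{1}{2p}}$ dominates when $p < 6$, while the Knapp term $\lambda^{\frac{1}{2} - \frac{2}{p}}\delta^{\frac{1}{2}}$ --- which is precisely what appears when all caps of the annulus carry comparable mass --- dominates for $p > 6$ with $\delta$ not too small. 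In the range $2 \le p \le 4$ no decoupling is needed: I would interpolate, via Riesz--Thorin on the operator, between Plancherel at $p = 2$, where the norm is $1$, and the Bourgain--Burq--Zworski bound $(\lambda\delta)^{\frac{1}{8}}$ at $p = 4$, valid for all $\lambda^{-1} < \delta < 1$; the interpolation exponents reproduce $(\lambda\delta)^{\frac{1}{4} - \frac{1}{2p}}$ exactly, with no loss.

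For $4 < p < 6$ I would rescale $\mathcal{A}_{\lambda,\delta}$ by $\lambda^{-1}$ to the $\frac{\delta}{\lambda}$-neighbourhood of the unit circle and apply $\ell^2$ decoupling for the circle into the $\sim (\lambda/\delta)^{1/2}$ canonical arcs $\tau$, each carrying frequencies in a $\delta \times (\lambda\delta)^{1/2}$ box; estimating a single cap by interpolating Plancherel with the trivial $\ell^1 \to \ell^\infty$ bound on its box, reassembling, and using $\sum_\tau \|a_\tau\|_{\ell^2}^2 = 1$ gives the conjectured exponent up to $\lambda^\epsilon$, once one also checks that $\#(\tau \cap \mathbb{Z}^2)$ is, for all $\tau$, no larger than $\lambda^\epsilon$ times its average $\lambda^{1/2}\delta^{3/2}$. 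Removing the $\lambda^\epsilon$ when $4 < p < 6$ is the delicate step and is where the strict inequality $p < 6$ must be used: rather than a single lossy $L^6$ decoupling I would iterate over dyadic scales, controlling the intermediate-cap contributions at each step by the loss-free $L^4$ estimate and collecting a genuine positive power from the $L^4$--$L^6$ gap, so that the product of the finitely many per-decade constants stays $O(1)$; a reverse-square-function version of the same idea would also work.

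For $p \ge 6$ and $\delta$ large --- the regime of part (i) and of the ``$\delta > \lambda^{-1/3}$'' clause of part (ii) --- the equal-mass scenario is the bottleneck, so I would pass to caps finer than canonical and use small-cap decoupling for the circle, recovering an $\ell^p$ sum over caps. Inside a cap of angular width $\lambda^{-\beta}$ with $\beta > \frac{1}{2}$ the relevant frequencies lie in a thin annular box of dimensions $\delta \times \lambda^{1-\beta}$, and the cost of the scheme is governed by how many lattice points such a box can hold, equivalently by cancellation in the exponential sum counting integers $n$ in a short interval with $n^2 + m^2$ in the spectral window. Feeding in the elementary divisor-type bound and optimising over $\beta$, the number of caps, and $p$ gives the first threshold $\delta > \lambda^{-\frac{1 - 6/p}{3 - 2/p}}$; inserting the sharpest available exponential-sum estimate of van der Corput--Bombieri--Iwaniec type --- responsible for the arithmetic constants $10,29,64,14$ --- gives the second, $\delta > \lambda^{-\frac{10 - 64/p}{29 - 14/p} + \epsilon}$; taking the better of the two proves part (i). For $6 \le p \le 10$ in part (ii) I would interpolate the $p = 6$ decoupling bound (with its $\lambda^\epsilon$) against this small-cap estimate at an auxiliary exponent in $(6,\infty)$, chosen so the interpolant matches \eqref{chardonneret} up to $\lambda^\epsilon$ on all of $[6,10]$; when $\delta > \lambda^{-1/3}$ the canonical caps already contain $\gtrsim 1$ lattice point on average, their counts behave like their area up to $\lambda^\epsilon$, and the decoupling-plus-counting scheme closes with $\epsilon$ loss for every $p$; the endpoint $p = \infty$ is the quoted $\lambda^\epsilon$ bound for eigenfunctions (Plancherel together with the Gaussian-integer divisor bound).

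The main obstacle is the large-$p$, small-$\delta$ corner. There the estimate is genuinely sensitive to the number of lattice points in thin annular caps, i.e. to deep cancellation in the associated exponential sums --- a relative of the Gauss circle problem --- and the thresholds $\lambda^{-\frac{10 - 64/p}{29 - 14/p}}$ mark exactly where current exponential-sum technology, once fed into the decoupling scheme, is exhausted; lowering $\delta$ further there would require new analytic number theory rather than new harmonic analysis. A secondary but real technical difficulty is making the $4 < p < 6$ bound loss-free, i.e. organising the multiscale decoupling so that the $\lambda^\epsilon$ never enters.
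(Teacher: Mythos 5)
Your proposal has the right global picture but contains one concrete error that sinks the $4<p<6$ part, and takes a genuinely different (and unsubstantiated) route to the large-$p$ thresholds.

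The central gap is the assertion that ``$\#(\tau\cap\mathbb{Z}^2)$ is, for all $\tau$, no larger than $\lambda^\epsilon$ times its average $\lambda^{1/2}\delta^{3/2}$.'' This is false: a cap $\theta$ of dimensions $\delta\times(\lambda\delta)^{1/2}$ whose long axis aligns with a lattice direction can contain an entire arithmetic progression of up to $\sim\lambda^{1/2}\delta^{1/2}$ lattice points, which dwarfs $\lambda^{1/2}\delta^{3/2}$ whenever $\delta\ll 1$. The paper does not pretend otherwise; the whole point of Section~\ref{sectioncounting} is to stratify the caps by their lattice-point count $2^s$ and prove (Lemma~\ref{lemmacounting2}) that there are at most $\lambda\delta 2^{-2s}$ caps with $\sim 2^s$ points. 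One then decomposes $P_{\lambda,\delta}=P^0_{\lambda,\delta}+\sum_s P^s_{\lambda,\delta}$ and balances the $L^4$ or $L^6$ decoupling bound for each $P^s$ against the $L^\infty$ bound $\lambda^{1/2}\delta^{1/2}2^{-s/2}$ coming from this cap count. Without the stratification your single-scale decoupling step cannot be closed. A second, smaller point: your mechanism for removing the $\lambda^\epsilon$ when $4<p<6$ (``iterate over dyadic scales... collecting a genuine positive power from the $L^4$--$L^6$ gap'') is not what happens in the paper, and I do not see how it would close on its own. The paper's loss-free bound on the $P^s$ pieces comes from interpolating the \emph{loss-free} bilinear $L^4$ estimate (Lemma~\ref{lemmaL4}) against the loss-free $L^\infty$ cap-count bound; the leftover $\lambda^\epsilon$ in the $P^0$ estimate is then absorbed using the polynomial gap built into the hypothesis $\lambda^{-1+\kappa}<\delta<\lambda^{-\kappa}$, $\kappa>0$, not by an iteration.

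On the $p\ge 6$ no-loss thresholds in part (i): you propose to derive them from small-cap decoupling plus lattice counting, optimising over the cap width $\beta$ and feeding in exponential-sum estimates. The paper does not do this for Conjecture~\ref{conjproj}; small-cap decoupling is reserved for Conjecture~\ref{conjkernel} (Section~\ref{sectionconvolution}). Theorem~\ref{thm2} is instead a Stein--Tomas-type $TT^*$ argument: Poisson-summing the mollified kernel $\Phi^\flat_{\lambda,\delta}$, decomposing it dyadically in the spatial variable into pieces $\Phi^{\flat,M}_{\lambda,\delta}$, interpolating a trivial $L^2\to L^2$ bound $M\delta$ against an $L^1\to L^\infty$ bound obtained from the asymptotics of the Bessel kernel. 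The first threshold $\delta>\lambda^{-(1-6/p)/(3-2/p)}$ comes from the \emph{trivial} pointwise bound $|S_{\lambda,M,x}|\lesssim\lambda^{1/2}\delta M^{3/2}$ on the resulting exponential sum, not from any ``elementary divisor-type bound'' as you suggest; the second, $\delta>\lambda^{-(10-64/p)/(29-14/p)+\epsilon}$, comes from M\"uller's two-dimensional Van der Corput estimate, applied pointwise, and the specific exponents $10,29,64,14$ arise from the choice $q=3$ in that theorem. Your small-cap framework might in principle produce some threshold of this shape, but you would need to verify that the resulting exponents match, and the connection to the constants you list is not established.

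In short: part (ii) with $2\le p\le 4$ is fine, but for $4<p<6$ you must introduce the dyadic stratification of caps by lattice-point count, and for the no-loss thresholds in part (i) your proposed mechanism is not the paper's and the exponents are unjustified.
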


This statement follows from combining Theorem~\ref{thm1} and Theorem~\ref{thm2}.
Turning to Conjecture~\ref{conjkernel}, it is a consequence of Conjecture~\ref{conjproj} if $\delta > \lambda^{-1+\frac{8}{p+2}}$; thus, the previous theorem gives the validity of Conjecture~\ref{conjkernel} in some range. Furthermore, combining Corollary~\ref{dd13} and Proposition~\ref{dd15} gives the following theorem.

\begin{thm} \label{mainthmB} Conjecture~\ref{conjkernel} holds with $\epsilon$ loss if $2 \leq p \leq 6$ or $\lambda> \delta^{-\frac 13}$.
\end{thm}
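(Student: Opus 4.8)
I would establish the two alternatives in the statement by separate arguments, invoking Corollary~\ref{dd13} for $2\le p\le 6$ and Proposition~\ref{dd15} for $\lambda>\delta^{-1/3}$. In both cases one works directly with the trigonometric polynomial $\Phi_{\lambda,\delta}$ rather than with the operator $P_{\lambda,\delta}$. The two elementary inputs are, first, the divisor bound $r_2(m)\lesssim_\epsilon m^\epsilon$, which gives for $\delta>\lambda^{-1}$ that $N:=\#(\mathcal{A}_{\lambda,\delta}\cap\mathbb{Z}^2)\lesssim_\epsilon\lambda^{\epsilon}\lambda\delta$, whence the two endpoint bounds $\|\Phi_{\lambda,\delta}\|_{L^2}=N^{1/2}\lesssim_\epsilon\lambda^{\epsilon}(\lambda\delta)^{1/2}$ and $\|\Phi_{\lambda,\delta}\|_{L^\infty}=N\lesssim_\epsilon\lambda^{\epsilon}\lambda\delta$; and second, the decoupling theorems for the circle.

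\textbf{The range $2\le p\le 6$.} For $p$ in this range I would apply the $\ell^2$-decoupling theorem of Bourgain--Demeter~\cite{BourgainDemeter3} to $\Phi_{\lambda,\delta}$, partitioning the annulus $\mathcal{A}_{\lambda,\delta}$ into caps $\theta$ of dimensions $\delta\times(\lambda\delta)^{1/2}$ (angular width $(\delta/\lambda)^{1/2}$). This reduces, up to $\lambda^{\epsilon}$, to estimating the square function $\big(\sum_\theta\|\Phi_\theta\|_{L^p}^2\big)^{1/2}$; since each cap has one side $<1$, the piece $\Phi_\theta$ behaves like a one-dimensional exponential sum, so $\|\Phi_\theta\|_{L^p}\lesssim M_\theta^{1-1/p}$, where $M_\theta$ is the number of lattice points in $\theta$. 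Inserting this, using $\sum_\theta M_\theta=N$ together with a bound on $\max_\theta M_\theta$, and separating the two cases $\delta\le\lambda^{4/p-1}$ and $\delta\ge\lambda^{4/p-1}$, yields the two-term bound of Conjecture~\ref{conjkernel}. The lower half of the range also admits an elementary treatment: the $p=4$ case is the statement that the additive energy $\#\{k_1+k_2=k_3+k_4:\ k_i\in\mathcal{A}_{\lambda,\delta}\cap\mathbb{Z}^2\}$ is $\lesssim_\epsilon\lambda^{\epsilon}(\lambda\delta)^2$, which can be obtained by estimating $\#\big(\mathcal{A}_{\lambda,\delta}\cap(\mathcal{A}_{\lambda,\delta}+v)\cap\mathbb{Z}^2\big)$ and summing in $v$, in the spirit of the $L^4$ analysis of Bourgain--Burq--Zworski~\cite{BourgainBurqZworski}; interpolating against the $L^2$ bound covers $2\le p\le 4$ directly.

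\textbf{The range $\lambda>\delta^{-1/3}$.} Here $p$ may be arbitrarily large, so $\ell^2$-decoupling is no longer sharp; instead I would combine small-cap decoupling for the circle with interpolation against the trivial bound $\|\Phi_{\lambda,\delta}\|_{L^\infty}=N\lesssim_\epsilon\lambda^{\epsilon}\lambda\delta$. Concretely, interpolating the $L^6$ estimate supplied by Corollary~\ref{dd13} against this $L^\infty$ bound produces $\|\Phi_{\lambda,\delta}\|_{L^p}\lesssim_\epsilon\lambda^{\epsilon}\lambda^{1-2/p}\delta$ for $p\ge 6$, and the hypothesis $\lambda>\delta^{-1/3}$ is precisely the condition under which this interpolation --- together with the corresponding small-cap input used to treat the intermediate exponents --- reproduces exactly the exponents predicted by Conjecture~\ref{conjkernel}. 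Combined with the case $2\le p\le 6$, this covers every case of the theorem.

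\textbf{Main difficulty.} The delicate point is the square-function estimate in the range $2\le p\le 6$ when $p$ is close to $6$ and the window is narrow, $\delta<\lambda^{-1/3}$: the caps then have area $\lambda^{1/2}\delta^{3/2}<1$, most of them are empty, and the crude bound $\sum_\theta M_\theta^{2-2/p}\le(\max_\theta M_\theta)^{1-2/p}N$ loses a power of $\lambda\delta$. Avoiding this loss requires genuinely arithmetic control on the number of lattice points lying on a short circular arc of the annulus --- of the type studied by Cilleruelo--C\'ordoba and others --- together with a bound on the number of ``heavy'' caps; matching this input against the $L^4$ additive-energy bound and carrying it all the way to the decoupling endpoint $p=6$ is where the substantive work lies.
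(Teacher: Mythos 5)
Your overall picture is right, but the proposal as written has two real problems: you swap the roles of the two ingredients, and you stop short of the lattice-point count that actually closes the $\ell^2$-decoupling argument.

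On attribution: the paper deduces the theorem in one line from Corollary~\ref{dd13} and Proposition~\ref{dd15}, but in the opposite pairing to the one you announce. Corollary~\ref{dd13} (small-cap decoupling via Theorem~\ref{dd12}) is the ingredient that handles the large-$\delta$ range $\delta\ge\lambda^{-1/3}$ for \emph{all} $p$, while Proposition~\ref{dd15} ($\ell^2(L^6)$ decoupling) handles $p\le 6$ for $\delta\le\lambda^{-1/3}$. (The condition ``$\lambda>\delta^{-1/3}$'' in the theorem statement is a typo for ``$\delta>\lambda^{-1/3}$'', i.e.\ $\lambda>\delta^{-3}$ — this is the only range that the two cited results actually cover, and your second argument implicitly assumes it as well.) Your plan invokes \ref{dd13} for $p\le 6$ and \ref{dd15} for the other range, which is backwards.

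On the $2\le p\le 6$ case, the genuine gap is the one you flag yourself. After $\ell^2$ decoupling and $\|\Phi_\theta\|_{L^p}\lesssim M_\theta^{1-1/p}$, you need to control $\sum_\theta M_\theta^{2-2/p}$, and, as you observe, the crude bound $(\max_\theta M_\theta)^{1-2/p}N$ loses. The input that the paper supplies — and which you leave as ``where the substantive work lies'' — is elementary and is already in Section~\ref{sectioncounting}. When $\delta\le\lambda^{-1/3}$, the $(\lambda\delta)^{1/2}\times\delta$ caps have area $\le 1/2$, so the lattice points in any such cap are collinear and equally spaced; assigning to each such cap the integer vector joining two consecutive lattice points, and noting that caps with well-separated angles carry distinct vectors, gives Lemma~\ref{lemmacounting2}: the number of caps with $\sim 2^s$ lattice points is $\lesssim\lambda\delta\, 2^{-2s}$ (for $s\ge 1$; $s=0$ is handled by the divisor bound with an $\epsilon$ loss). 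With that,
$$\sum_\theta M_\theta^{2-2/p}\;\les\;\sum_{s\ge 0}\lambda\delta\,2^{-2s}\,2^{s(2-2/p)}\;=\;\lambda\delta\sum_{s\ge 0}2^{-2s/p}\;\les\;\lambda\delta,$$
and $\|\Phi_{\lambda,\delta}\|_{L^p}\les(\lambda\delta)^{1/2}$, which is exactly the conjectured bound in this regime. This is precisely Proposition~\ref{dd15}'s argument. No Cilleruelo--C\'ordoba type input beyond this is needed. You should also note that this estimate must be proved at $p=6$ and then the lower $p$ follow by H\"older; interpolating up from $p=4$, as your last sentence suggests, would not by itself give the $p$ near $6$ case.

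On the $\delta>\lambda^{-1/3}$ case, your argument is not wrong but it is redundant: Corollary~\ref{dd13} already gives the conjecture for \emph{all} $p$ in that range, by combining Theorem~\ref{dd12} at the critical exponent $p_\delta\in[4,6]$ with H\"older below $p_\delta$ and the $L^\infty$ bound above $p_\delta$. You propose to re-do the $L^\infty$ interpolation starting from the $L^6$ estimate, which happens to produce the same exponent for $\delta\ge\lambda^{-1/3}$, but the content is entirely contained in Corollary~\ref{dd13} and its proof. The heavy lifting in this range is Theorem~\ref{dd12} (the four-case small-cap decoupling argument at the critical exponent), which you do not address.

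In short: correct high-level strategy and an honest identification of the gap, but the attributions of \ref{dd13} and \ref{dd15} are swapped, and the decisive lattice-counting step in Lemma~\ref{lemmacounting2} / Proposition~\ref{dd15} is left unproved.
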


Finally, we refer to Section~\ref{sectiongraphical} for a graphical representation of the ranges in $(p,\lambda,\delta)$.

\subsection{Ideas of the proofs and plan of the paper} \subsubsection{Decomposition into caps} This is the first possible line of attack on Conjecture~\ref{conjproj}, which is carried out in Section~\ref{sectioncaps}; here, caps are rectangles which optimally cover the annulus $\mathcal{A}_{\lambda,\delta}$. We investigate estimates on functions whose Fourier support is restricted to caps containing a bounded number of lattice points, relying crucially on $\ell^2$ decoupling. Combining these estimates with an estimate on caps with a given number of lattice points leads to our result in that section.

\subsubsection{Dyadic decomposition of the kernel} This approach to Conjecture~\ref{conjproj} is carried out in Section~\ref{sectiondyadic}. It relies on a dyadic decomposition of the convolution kernel of $P_{\lambda,\delta}$, which is reminiscent of the original proof of the Stein-Tomas theorem. In the regime where this approach is useful ($p$ large), an important threshold is the line $\delta = \lambda^{-\frac 13}$. Reaching (slightly) smaller $\delta$ can be achieved with the help of pointwise bounds on exponential sums, whose investigation is a classical topic in analytic number theory.

\subsubsection{Conjecture~\ref{conjkernel} and small caps} Section~\ref{sectionconvolution} is dedicated to Conjecture~\ref{conjkernel}. We show that it is partially equivalent to Conjecture~\ref{conjproj}, and explain its connection with additive combinatorics. In order to make progress on this conjecture for $p\leq 6$,  $\ell^2$ decoupling is not strong enough, the right tool proves to be  small cap decoupling. Combining this tool with careful estimates on the distribution of lattice points in $\mathcal{A}_{\lambda,\delta}$ leads to our main result in that section.

\subsection{General curves} \label{subsectiongeneral} How much do our results depend on the geometry of the circle? Is it possible to replace it by an ellipse (which would correspond to general tori $\mathbb{R}^2 / [ \mathbb{Z} e_1 + \mathbb{Z} e_2 ]$, where $e_1$ and $e_2$ are non-colinear vectors in $\mathbb{R}^2$), or even by a general curve? We consider a smooth arc or a closed smooth curve, which is denoted by $\Gamma$ and is compact; the natural generalizations of $P_{\lambda,\delta}$ and $\Phi_{\lambda,\delta}$ are given by
$$
\widetilde{P_{\lambda,\delta}} f = \sum_{k \in \mathcal{N}_\delta(\lambda \Gamma) \cap \mathbb{Z}^2}\widehat{f_k} e^{2\pi i k\cdot x} \quad \mbox{and} \quad \widetilde{\Phi_{\lambda,\delta}}(x) = \sum_{k \in \mathcal{N}_\delta(\lambda \Gamma) \cap \mathbb{Z}^2} e^{2\pi i k \cdot x},
$$
where $\mathcal{N}_\delta(\lambda \Gamma)$ stands for the $\delta$-neighborhood of $\lambda \Gamma$.

Nearly all the proofs below remain valid as long as the curvature of $\Gamma$ does not vanish, and almost all the intermediary estimates proved in this paper still hold. This is the case for the cap counting Lemma~\ref{lemmacounting2}, the $L^4$ estimate Lemma~\ref{lemmaL4}, the decoupling estimate Lemma~\ref{lemmadecoupling}, the exponential sum estimates\footnote{To see why these exponential sum estimates still hold, observe that the asymptotics of the Fourier transform of the superficial measure supported on $\Gamma$ are very similar to~\eqref{asymptoticsJ}. The only difference is that the phase function $|\xi|$ is replaced by a function $\phi(\xi)$, but it remains smooth and 1-homogeneous, see~\cite{Stein}, page 360. } in Section \ref{sectiondyadic}, and the small cap estimates in Section~\ref{sectionprogress}.

But the bound on the number of lattice points in $\mathcal{A}_{\lambda,\delta}$ is lost! For the square torus $\mathbb{R}^2 / \mathbb{Z}^2$, the divisor bound immediately gives the estimate $\lambda^{1+\epsilon} \delta$; but for a general torus, or a general curve, such an argument is not available to estimate $\# \mathcal{N}_\delta(\lambda \Gamma) \cap \mathbb{Z}^2$, and this bound is much more difficult to obtain. Exponential sum bounds seem to be the only possibility, which are closely related, if not equivalent, to the Gauss circle problem. They give the expected result for $\delta > \lambda^{-\frac 13}$ easily, but it is difficult to go significantly below this barrier.

As a result, for general smooth curves (with curvature) and the associated $\widetilde{P_{\lambda,\delta}}$ operators,
\begin{itemize}
\item Theorem~\ref{thm1} remains true for $\delta > \lambda^{-\frac 13}$, and a little beyond, even though we will not compute here the exact exponents.
\item Theorem~\ref{thm2} remains true.
\item Theorem~\ref{dd12} and its corollaries remain true for $\delta > \lambda^{-\frac 13}$.
\end{itemize}
However, conjectures~\ref{conjproj} and~\ref{conjkernel} break down for general curves (with curvature) for small $\delta$: see Remark~\ref{remarkparabola} on the case of the parabola.

\subsection*{Acknowledgement} When working on this article, CD was supported by the NSF grant DMS-2055156. He thanks his student Hongki Jung for help with Figure 1.   PG was supported by a start-up grant from Imperial College and a Wolfson fellowship. He is most grateful to Simon Myerson for sharing his number theoretical insight in numerous conversations. 

\section{Caps decomposition of the kernel}

\label{sectioncaps}

In this section, we prove Conjecture~\ref{conjproj} for some range of $(p,\lambda,\delta)$ by decomposing a function supported on the annulus (in Fourier) into a sum of functions supported on caps (in Fourier).

\subsection{Counting points and caps}

\label{sectioncounting}

Recall that the annulus of inner radius $\lambda-\delta$ and width $2\delta$ is denoted
$$
\mathcal{A}_{\lambda,\delta} = \{ x \in \mathbb{R}^2, \, \lambda - \delta < |x| < \lambda + \delta \}.
$$

Next, we will split the annulus into caps: $\mathcal{A}_{\lambda,\delta}$ can be covered by a finitely disjoint collection $\mathcal{C}$ of caps $\theta$, of dimensions $\delta \times (\lambda \delta)^{\frac 12}$:
$$
\mathcal{A}_{\lambda,\delta} \subset \cup_{\theta \in \mathcal{C}} \theta.
$$
The number of such caps is $\sim \lambda^{\frac 12} \delta^{-\frac 12}$. We will be interested in the number of lattice points contained in a cap; therefore the notation
$$
\theta' = \theta \cap \mathbb{Z}^2, \qquad \mbox{and, more generally,}\qquad E' = E \cap \mathbb{Z}^2 \;\; \mbox{if $E \subset \mathbb{R}^2$}
$$
will be useful.
The maximal cardinality of $\theta'$ is $\sim \lambda^{\frac{1}{2}} \delta^{\frac 12}$; as for the average cardinality of $\theta'$, it is expected to be given by the area of $\theta$, namely $\lambda^{\frac 1 2} \delta^{\frac 3 2}$.

We will now split the collection $\mathcal C$ into $\cup_s \mathcal{C}_s \cup \mathcal{C}_0$ as follows
\begin{itemize}
\item If $2^s$ is such that $1 + \lambda^{\frac{1}{2}} \delta^{\frac 32} \ll 2^s \lesssim \lambda^{\frac 12} \delta^{\frac 12}$, $\mathcal{C}_s$ gathers all caps $\theta$ such that $\#  \theta' \sim 2^s$.
\item All remaining caps go into $\mathcal{C}_0$; in other words, caps in $\mathcal{C}_0$ are such that $\# \theta'  \lesssim \lambda^{\frac{1}{2}} \delta^{\frac 3 2} +1$.
\end{itemize}

\begin{lem}
\label{lemmacounting2}
If $2^s \gg 1 + \lambda^{\frac{1}{2}} \delta^{\frac 32}$, then
$$
\# \mathcal{C}_s \lesssim \lambda \delta 2^{-2s}.
$$
\end{lem}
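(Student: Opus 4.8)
The plan is to exploit a rigidity phenomenon: a cap carrying anomalously many lattice points must have those points lined up along a single short arithmetic progression, and only few caps can host a given such progression. Throughout I write $L=(\lambda\delta)^{1/2}$ for the long side of a cap and $M=(\lambda\delta)^{1/2}2^{-s}$; note $M\gtrsim 1$, as otherwise $\mathcal C_s=\emptyset$ by the known bound $\#\theta'\lesssim\lambda^{1/2}\delta^{1/2}$.

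\textbf{Step 1: arithmetic structure of a rich cap.} I will fix $\theta\in\mathcal C_s$, a $\delta\times L$ rectangle with $\#\theta'\sim 2^s$. Since its area $\delta L=\lambda^{1/2}\delta^{3/2}$ is $\ll 2^s$ by hypothesis, I plan to invoke the standard estimate $\#\theta'\lesssim 1+\nu_1^{-1}+\nu_1^{-1}\nu_2^{-1}$ for lattice points in a convex body, where $\nu_1\le\nu_2$ are the successive minima of (a centred translate of) $\theta$ relative to $\mathbb Z^2$; since $\nu_1^{-1}\nu_2^{-1}\asymp\operatorname{area}(\theta)$ by Minkowski's second theorem, this forces $\nu_1\lesssim 2^{-s}$. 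Consequently $\nu_1\theta$ — a box of dimensions $\lesssim 2^{-s}\delta$ by $\lesssim 2^{-s}L=M$ — contains a nonzero lattice point; taking $m_\theta$ to be a primitive vector in its direction gives $1\le|m_\theta|\lesssim M$ and, crucially, a component of $m_\theta$ transverse to the long axis of $\theta$ of size $\lesssim 2^{-s}\delta$. Because this transverse component is so small, the lattice lines in direction $m_\theta$ that meet $\theta$ will number $\lesssim 1+\delta|m_\theta|+2^{-s}\delta L\lesssim 1+\lambda^{1/2}\delta^{3/2}2^{-s}\lesssim 1$; hence all of $\theta'$ lies on $O(1)$ lines parallel to $m_\theta$, and one of them carries a genuine progression $\{x_0+jm_\theta:0\le j<J\}\subset\mathcal A_{\lambda,\delta}\cap\mathbb Z^2$ with $J\gtrsim 2^s$.

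\textbf{Step 2: locating the cap from its progression.} Next I will fix a primitive $m$ with $1\le|m|\lesssim M$ and bound the number of $\theta\in\mathcal C_s$ for which $m_\theta=m$. For such $\theta$, the progression from Step 1 forces the convex quadratic $g(j)=|x_0+jm|^2=|m|^2j^2+2(x_0\cdot m)j+|x_0|^2$ to remain in $\big((\lambda-\delta)^2,(\lambda+\delta)^2\big)$, an interval of length $4\lambda\delta$, for $j$ running over an interval of length $\gtrsim 2^s$ that contains $0$. Since the leading coefficient $|m|^2\ge 1$, this re-proves $|m|\lesssim M$ and also pins the vertex $j^{*}=-(x_0\cdot m)/|m|^2$ to within $O(\lambda\delta\,|m|^{-2}2^{-s})$ of that interval, so that $|x_0\cdot m|=|m|^2|j^{*}|\lesssim\lambda\delta\,2^{-s}$. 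Thus $x_0$ lies in the intersection of $\mathcal A_{\lambda,\delta}$ with the slab $\{\,|y\cdot m|\le C\lambda\delta\,2^{-s}\,\}$, which is a union of two patches straddling the diameter orthogonal to $m$, each of arc length $\lesssim\lambda\delta\,2^{-s}/|m|$ and therefore covered by $\lesssim 1+(\lambda\delta)^{1/2}/(2^s|m|)=1+M/|m|$ caps. Since $x_0\in\theta$ and the cover $\mathcal C$ is finitely overlapping, $\theta$ is then among $\lesssim 1+M/|m|$ caps, i.e. $\#\{\theta\in\mathcal C_s:m_\theta=m\}\lesssim 1+M/|m|$.

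\textbf{Step 3 and the main obstacle.} Summing over primitive vectors $m$, of which there are $\lesssim\mu^{2}$ with $|m|\sim\mu$, will give $\#\mathcal C_s\le\sum_{1\le|m|\lesssim M}\#\{\theta:m_\theta=m\}\lesssim\sum_{\mu\lesssim M}(\mu^{2}+\mu M)\lesssim M^{2}=\lambda\delta\,2^{-2s}$, which is the claim. I expect the main obstacle to be Step 1 — pinning down a single short primitive direction out of a cap that has many more lattice points than its area — and, in particular, using the hypothesis $2^s\gg 1+\lambda^{1/2}\delta^{3/2}$ in the two essential places: to force the first successive minimum down to $\lesssim 2^{-s}$, and to force the number of relevant lattice lines through the cap to be $O(1)$, so that one genuinely obtains a long progression rather than a bounded number of short ones. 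The remaining steps are a quadratic-in-$j$ computation and a dyadic sum.
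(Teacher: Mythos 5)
Your proof is correct, and the underlying double count is the same one the paper runs: each rich cap $\theta$ carries a short primitive direction $m_\theta$ with $|m_\theta|\lesssim M:=(\lambda\delta)^{1/2}2^{-s}$, there are $O(M^2)$ candidate directions, and a fixed direction $m$ is compatible with only $O(1+M/|m|)$ caps, giving $\#\mathcal{C}_s\lesssim M^2$. Where you diverge is in how each ingredient is obtained. For the arithmetic structure of a rich cap you invoke Minkowski's second theorem and then count lattice lines, whereas the paper reaches the (slightly stronger) statement that $\theta'$ is a single equally spaced progression directly from the fact that a symmetric convex body of small area meets a lattice coset in a set that is at most one-dimensional. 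For localizing the cap given its direction, you analyze the convex quadratic $j\mapsto|x_0+jm|^2$, concluding $\theta$ lies in the slab $\{|y\cdot m|\lesssim\lambda\delta 2^{-s}\}$; the paper instead records that the angle between $l_\theta$ and the long axis of $\theta$ is $\lesssim\delta 2^{-m-s}$, so caps with the same direction sit inside a fixed angular window, and it organizes the sum by the dyadic spacing $2^m$ rather than by the vector $m$ itself. These are two views of the same geometric fact (a long lattice progression inside a thin annulus is nearly tangent), and the intermediate bounds match term by term: your $1+M/|m|$ caps per direction is the paper's $\lambda^{1/2}\delta^{1/2}2^{-m-s}$ when $|m|\sim2^m$, and your $\sum_{1\le|m|\lesssim M}1\sim M^2$ corresponds to the paper's $\lesssim2^{2m}$ direction count summed dyadically. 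Your packaging avoids the auxiliary spacing parameter $m$, at the cost of a heavier lattice-geometry input (Minkowski) and a somewhat more elaborate Step~2 computation that the angular phrasing sidesteps.
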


\begin{proof} This follows along the lines of the argument for Theorem 2.17 in~\cite{BourgainDemeter3}. Since $2^s \gg \lambda^{\frac 1 2} \delta^{\frac 32}$, which is the area of a cap, the set $\theta'$ is one-dimensional\footnote{Indeed, if a convex body $K \subset \mathbb{R}^2$ is symmetric with respect
to the origin, $a \in \mathbb{R}^2$ and $K \cap (a + \mathbb{Z}^2)$ has dimension 2, then $\# (a + \mathbb{Z}^2) \cap K \lesssim |K|$.}. It consists of colinear points, with equal spacing. We now split $\mathcal{S}_s$ into
$$
\mathcal{S}_{s,m}  = \{ \theta \in \mathcal{C},  \; \#  \theta'  \sim 2^s, \; \mbox{and two consecutive points in $\theta' $ are $\sim 2^m$ apart} \}. 
$$

On the one hand, we can associate to $\theta \in \mathcal{S}_{s,m}$ its direction $d_\theta$, which is the difference between two consecutive points in $\theta'$. This is a vector  of $\mathbb{Z}^2$ with magnitude $\sim 2^m$; therefore, the number of possible directions is $\lesssim 2^{2m}$.

On the other hand, the angle between $\theta'$ and the major axis of $\theta$ is $\lesssim \delta 2^{-m -s}$. If we consider a subcollection of $\theta \in \mathcal{S}_{s,m}$ with angular separation $\gg \delta 2^{-m -s}$, then the directions $d_\theta$ are distinct. Such an angular separation can be achieved by labeling all caps in the collection $\mathcal{C}$, starting at one cap, and following the circle; and then keeping only those caps which are in a fixed class modulo $\sim \lambda^{\frac 1 2} \delta^{\frac 12} 2^{-m-s}$.

These arguments show that
\begin{equation}
\label{boundSms}
\# \mathcal{S}_{m,s} \lesssim \lambda^{\frac 12} \delta^{\frac 12} 2^{-m-s} 2^{2m} = \lambda^{\frac 12} \delta^{\frac 12} 2^{m-s}.
\end{equation}
Summing over $2^m \lesssim \lambda^{\frac 12} \delta^{\frac 12} 2^{-s}$ gives the desired result.
\end{proof}

\begin{lem}
\label{lemmacounting1}
If $\lambda \delta > 1$,
$$
\# ( \mathcal{A}_{\lambda,\delta})' \lesssim \lambda^{1+\epsilon} \delta.
$$
\end{lem}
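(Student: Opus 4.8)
The plan is to count lattice points according to their squared distance to the origin and then invoke the divisor bound. If $k \in (\mathcal{A}_{\lambda,\delta})'$, then $n := |k|^2$ is an integer lying in the interval $\big( (\lambda-\delta)^2, (\lambda+\delta)^2 \big)$, whose length is $(\lambda+\delta)^2 - (\lambda-\delta)^2 = 4\lambda\delta$. Conversely, for a fixed integer $n$, the number of $k \in \mathbb{Z}^2$ with $|k|^2 = n$ is the arithmetic function $r_2(n)$ counting representations of $n$ as a sum of two squares. Hence
\[
\#(\mathcal{A}_{\lambda,\delta})' = \sum_{\substack{n \in \mathbb{Z} \\ (\lambda-\delta)^2 < n < (\lambda+\delta)^2}} r_2(n).
\]

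Next I would bound each summand uniformly. By the classical formula $r_2(n) = 4 \sum_{d \mid n} \chi(d)$, where $\chi$ is the nonprincipal Dirichlet character modulo $4$, one has $r_2(n) \leq 4\, d(n)$ with $d$ the divisor function, and the divisor bound gives $d(n) \lesssim_\epsilon n^\epsilon$. Since every $n$ in the range above satisfies $n < (\lambda+\delta)^2 < 4\lambda^2$ (recall $\delta < 1 < \lambda$), this yields $r_2(n) \lesssim_\epsilon \lambda^{2\epsilon}$ uniformly in the relevant range of $n$.

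Finally I would sum over $n$. The interval $\big( (\lambda-\delta)^2, (\lambda+\delta)^2 \big)$ has length $4\lambda\delta$, hence contains at most $4\lambda\delta + 1$ integers; the hypothesis $\lambda\delta > 1$ turns this into the clean bound $\lesssim \lambda\delta$ for the number of terms. Combining the two estimates, $\#(\mathcal{A}_{\lambda,\delta})' \lesssim_\epsilon \lambda\delta \cdot \lambda^{2\epsilon} = \lambda^{1+2\epsilon}\delta$, and relabeling $\epsilon$ gives the claim.

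I do not expect a genuine obstacle here: the argument is a direct application of the divisor bound, and the only mildly delicate point is the role of the hypothesis $\lambda\delta>1$, which is needed precisely so that the contribution of the endpoints (the ``$+1$'') does not dominate the count of integers in the annular range. As emphasized in the introduction, it is exactly this step that fails to generalize: on the square torus it is supplied by the divisor bound in $\mathbb{Z}[i]$, but for a general torus or a general curve no such elementary input is available.
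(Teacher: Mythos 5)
Your argument is correct and matches the paper's proof: the paper also counts the $\sim \lambda\delta$ integers $n$ with $\lambda-\delta < \sqrt{n} < \lambda+\delta$ and bounds the number of representations of each such $n$ as a sum of two squares by $O(\lambda^\epsilon)$ via the divisor bound. Your explicit use of Jacobi's formula $r_2(n) = 4\sum_{d \mid n}\chi(d)$ is just a more detailed rendering of the paper's appeal to the divisor bound in $\mathbb{Z}[i]$.
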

\begin{proof} 
There are $\sim \lambda \delta$ integers $n$ such that $\lambda - \delta < \sqrt{n} < \lambda + \delta$. For each such integer, there are at most $O(\lambda^\epsilon)$ solutions of $x^2 + y^2 =n$, by the divisor bound in $\mathbb{Z}[i]$.
\end{proof}

\subsection{$L^p$ estimates on caps with bounded numbers of points}

\label{sectionLp}

The basic idea behind the $L^4$ and $L^6$ estimates which are stated below is to break down our spectral projectors into projections on caps. In order to do so, 
 we choose first a smooth partition of unity $(\chi_\theta)$ associated to the collection $\mathcal{C}$:
$$
\operatorname{Supp} \chi_\theta \subset \theta, \qquad \mbox{and} \qquad \sum_{\theta \in \mathcal{C}}  \chi_\theta = 1 \quad \mbox{on $\mathcal{A}_{\lambda,\delta}$}.
$$
and define next the Fourier multipliers
$$
P^\theta = \chi^\theta(D).
$$

\begin{lem}[$L^4$ estimate by the bilinear argument]
\label{lemmaL4}
Let $f$ be a function on the torus whose Fourier support $S \subset \mathcal{A}_{\lambda,\delta}$ is such that for any $\theta \in \mathcal{C}$, $\# \left( S \cap \theta \right)' \leq N$.
Then
$$
\| f \|_{L^4} \lesssim_\epsilon N^{\frac 14} \| f \|_{L^2}.
$$
\end{lem}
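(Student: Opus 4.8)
The plan is to run the classical bilinear $L^4$ argument of C\'ordoba and Zygmund type (which, for $N\sim(\lambda\delta)^{1/2}$, recovers the estimate of Bourgain--Burq--Zworski): the hypothesis $\#(S\cap\theta)'\le N$ will supply a bilinear $L^2$ bound for products of cap pieces, and the curvature of the circle will supply their almost orthogonality. I may assume $\lambda\delta\ge 1$: otherwise the annulus $\mathcal A_{\lambda,\delta}$ contains only $\lesssim_\epsilon\lambda^\epsilon$ lattice points (its radius‑square window has length $<4$, hence contains $O(1)$ integers, each with $\lesssim_\epsilon\lambda^\epsilon$ representations as a sum of two squares), and the estimate follows from $\|f\|_{L^4}\le(\#S)^{1/4}\|f\|_{L^2}$.

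Write $f_\theta=P^\theta f$, so that $f=\sum_{\theta\in\mathcal C}f_\theta$ and $|f|^2=\sum_{\theta,\theta'\in\mathcal C}f_\theta\,\overline{f_{\theta'}}$, and recall $\|f\|_{L^4}^4=\big\||f|^2\big\|_{L^2}^2$. The basic ingredient is the bilinear bound
\[
\big\|f_\theta\,\overline{f_{\theta'}}\big\|_{L^2}^2\;\le\;N\,\|f_\theta\|_{L^2}^2\,\|f_{\theta'}\|_{L^2}^2\qquad(\theta,\theta'\in\mathcal C),
\]
which is Plancherel plus Cauchy--Schwarz: for each $n$, the coefficient $\widehat{f_\theta\overline{f_{\theta'}}}(n)=\sum_{k-k'=n}\widehat{f_\theta}(k)\,\overline{\widehat{f_{\theta'}}(k')}$ is a sum over at most $\#(S\cap\theta)'\le N$ frequencies $k$ (each forcing $k'=k-n$), whence $|\widehat{f_\theta\overline{f_{\theta'}}}(n)|^2\le N\sum_{k-k'=n}|\widehat{f_\theta}(k)|^2|\widehat{f_{\theta'}}(k')|^2$, and one sums in $n$.

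Next I would split the pairs into a ``near'' family $\mathcal N$ — those $(\theta,\theta')$ with $\theta'$ within a fixed bounded number of steps of $\theta$ along the annulus, so that each $\theta$ has only $O(1)$ partners — and the complementary ``far'' family, and write $|f|^2=g_{\mathrm{near}}+g_{\mathrm{far}}$. For $g_{\mathrm{near}}$, the triangle inequality together with the bilinear bound, a Schur‑type application of Cauchy--Schwarz (using the $O(1)$ partners), and the orthogonality $\sum_\theta\|f_\theta\|_{L^2}^2\lesssim\|f\|_{L^2}^2$ give $\|g_{\mathrm{near}}\|_{L^2}\lesssim N^{1/2}\|f\|_{L^2}^2$. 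For $g_{\mathrm{far}}$, I claim the Fourier supports $\theta-\theta'$ of the pieces $f_\theta\overline{f_{\theta'}}$ have bounded overlap over the far pairs; granting this,
\[
\|g_{\mathrm{far}}\|_{L^2}^2\;\lesssim\;\sum_{(\theta,\theta')\,\mathrm{far}}\big\|f_\theta\,\overline{f_{\theta'}}\big\|_{L^2}^2\;\le\;N\Big(\sum_{\theta}\|f_\theta\|_{L^2}^2\Big)^{2}\;\lesssim\;N\,\|f\|_{L^2}^4 .
\]
Adding the two parts gives $\|f\|_{L^4}^4\lesssim N\|f\|_{L^2}^4$, as desired.

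The remaining point — the bounded overlap for far pairs, and the only place curvature enters — I would handle via lattice‑point geometry. It suffices that for each $n\in\mathbb Z^2$ only $O(1)$ far pairs satisfy $n\in\theta-\theta'$. If $n\in\theta-\theta'$ then some lattice point $k$ lies in $\theta$ with $k-n\in\theta'$; in particular $k$ belongs to the ``lens'' $L_n:=\mathcal A_{\lambda,\delta}\cap(n+\mathcal A_{\lambda,\delta})$, the intersection of two congruent annuli centred at $0$ and $n$. Thus it is enough to show $L_n$ meets only $O(1)$ caps, since then $k$, and hence $(\theta,\theta')$, ranges over $O(1)$ possibilities. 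The circles of radius $\lambda$ about $0$ and $n$ meet at an angle $\gtrsim 1$ when $|n|\sim\lambda$, in which case $L_n$ is a union of at most two sets of diameter $\lesssim\delta$, hence lies in $O(1)$ caps. When $|n|$ is near $0$ or near $2\lambda$ the circles are nearly tangent and $L_n$ is a thin curved strip, but the curvature lower bound together with $\lambda\delta\ge 1$ forces its length to be $\lesssim(\lambda\delta)^{1/2}$, so it still meets only $O(1)$ caps; the sole exception is a near‑tangency so extreme that the two caps lie within a bounded number of steps of each other along the annulus, which is exactly the configuration excluded from the far family. Making this lens dichotomy quantitative — and choosing the ``near'' threshold so that it dovetails with the range of $|n|$ where the lens degenerates — is the one genuinely technical step; everything else is bookkeeping with Plancherel and Cauchy--Schwarz.
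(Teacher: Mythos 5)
Your proof is correct, and it follows the same bilinear $L^4$ philosophy as the paper, but the technical implementation differs enough to be worth noting. The paper's proof writes $\|f\|_{L^4}^2=\|\sum_{\theta,\widetilde\theta}P^\theta f\,P^{\widetilde\theta}f\|_{L^2}$, invokes the almost-orthogonality of the bilinear products $P^{\theta_1}f\,P^{\widetilde\theta_1}f$ in one line (``the supports are disjoint unless, up to exchange, $\operatorname{dist}(\theta_1,\theta_2)+\operatorname{dist}(\widetilde\theta_1,\widetilde\theta_2)\lesssim(\lambda\delta)^{1/2}$''), applies H\"older to reduce each bilinear factor to single-cap $L^4$ norms, and then gets the $N^{1/4}$ gain from the interpolation $\|P^\theta f\|_{L^4}\lesssim\|P^\theta f\|_{L^\infty}^{1/2}\|P^\theta f\|_{L^2}^{1/2}\leq(\#\theta')^{1/4}\|P^\theta f\|_{L^2}$ on each cap. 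You instead bound $\|f_\theta\bar f_{\theta'}\|_{L^2}$ directly by Plancherel and Cauchy--Schwarz (which in effect gives the same $N^{1/2}$ bilinear gain without passing through cap-$L^4$), and then you carry out the almost-orthogonality by hand via an explicit near/far dichotomy and a lattice-point ``lens'' analysis of $\mathcal A_{\lambda,\delta}\cap(n+\mathcal A_{\lambda,\delta})$. Both routes are standard, and the exponents match. Your version is more self-contained in the orthogonality step (the paper leaves the geometric observation unproved), at the cost of the lens dichotomy, which you rightly flag as the one technical point that needs to be made quantitative. One small remark: you should double-check that your ``near'' threshold is chosen at least as large as the scale $(\lambda\delta)^{1/2}$ of a cap's length, so that the regime $|n|\lesssim(\lambda\delta)^{1/2}$ where the lens degenerates to the whole annulus is exactly the regime where $\theta'$ and $\theta$ are $O(1)$ steps apart; with that calibration the far-pair bounded overlap goes through, including for $|n|$ near $2\lambda$ (where, as a computation of the lens length shows, the lens still fits inside $O(1)$ caps, so no antipodal exception arises for $f\bar f$).
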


\begin{proof} The argument essentially follows that of Proposition 2.4 in~\cite{BourgainBurqZworski}. Since $\widehat{f}$ is supported in $\mathcal{A}_{\lambda,\delta}$,
$$
\| P_{\lambda,\delta} f \|_{L^4} = \left\| \sum_{\theta,\widetilde{\theta} \in \mathcal{C}} P^\theta f \, P^{\widetilde{\theta}} f \right\|_{L^2}^{\frac 12}.
$$
The key geometrical observation is that, up to exchanging the roles of $\theta_1$ and $\theta_2$, the supports of $P^{\theta_1} f P^{\widetilde{\theta}_1} f$ and $P^{\theta_2} f P^{\widetilde{\theta}_2} f$ are disjoint unless 
$$
\operatorname{dist}(\theta_1,\theta_2) + \operatorname{dist}(\widetilde{\theta}_1, \widetilde{\theta}_2) \lesssim \lambda^{\frac 12} \delta^{\frac 12}.
$$ 
As a consequence, almost orthogonality followed by H\"older's inequality gives the bound
\begin{align*}
\| P_{\lambda,\delta} f \|_{L^4}  \lesssim \left( \sum_{\theta,\widetilde{\theta}} \left\| P^\theta f P^{\widetilde{\theta}} f \right\|_{L^2}^2 \right)^{\frac 14}
\lesssim \left( \sum_{\theta,\widetilde{\theta} }  \left\| P^\theta f \right\|_{L^4}^2 \left\| P^{\widetilde{\theta}} f \right\|_{L^4}^2 \right)^{\frac 14}
\end{align*}
Applying successively interpolation, the Cauchy-Schwarz inequality and Lemma~\ref{lemmacounting2}, we can estimate
$$
\| P^\theta f \|_{L^4} \lesssim \| P^\theta f \|_{L^\infty}^{\frac 12} \| P^\theta f \|_{L^2}^{\frac 12} \lesssim \left( \# \theta' \right)^{\frac 14} \| P^\theta f \|_{L^2} \leq N^{\frac 14} \| P^\theta f \|_{L^2}.
$$
Injecting this inequality in the previous estimate, and using once again almost orthogonality,
$$
\| P_{\lambda,\delta} f \|_{L^4} \lesssim N^{\frac 14} \left( \sum_{\theta,\widetilde{\theta}}  \left\| P^\theta f \right\|_{L^2}^2 \left\| P^{\widetilde{\theta}} f \right\|_{L^2}^2 \right)^{\frac 14} \lesssim N^{\frac 14} \| f \|_{L^2}.
$$
\end{proof}

\begin{lem}[The $L^6$ estimate by $\ell^2$ decoupling]
\label{lemmadecoupling}
Let $f$ be a function on the torus whose Fourier support $S \subset \mathcal{A}_{\lambda,\delta}$ is such that for any $\theta \in \mathcal{C}$, $\# \left( S \cap \theta \right)' \leq N$.
Then
$$
\| f \|_{L^6} \lesssim_\epsilon \lambda^\epsilon \delta^{-\epsilon} N^{\frac 13} \| f \|_{L^2}.
$$
\end{lem}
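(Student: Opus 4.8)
The plan is to deduce this directly from the $\ell^2$ decoupling theorem for the circle \cite{BourgainDemeter3}, with the sparsity hypothesis $\#(S\cap\theta)'\le N$ fed in through a trivial Bernstein bound on each cap. The starting observation is that the collection $\mathcal{C}$ is precisely the canonical decomposition of the annulus $\mathcal{A}_{\lambda,\delta}$ into the arcs attached to $\ell^2$ decoupling: a cap of dimensions $\delta\times(\lambda\delta)^{1/2}$ is an arc of the circle of radius $\lambda$ of angular width $\sim(\delta/\lambda)^{1/2}$, thickened at scale $\delta$, and there are $\sim\lambda^{1/2}\delta^{-1/2}$ of them. Thus $\ell^2$ decoupling at $L^6$ — in the form valid for trigonometric polynomials on $\mathbb{T}^2$, as was used in \cite{GermainMyerson1} — applies and gives, for every $\epsilon>0$,
\[
\|f\|_{L^6}\ \lesssim_\epsilon\ (\lambda/\delta)^{\epsilon}\Big(\sum_{\theta\in\mathcal{C}}\|P^\theta f\|_{L^6}^2\Big)^{1/2}\ \lesssim_\epsilon\ \lambda^{\epsilon}\delta^{-\epsilon}\Big(\sum_{\theta\in\mathcal{C}}\|P^\theta f\|_{L^6}^2\Big)^{1/2},
\]
where the last inequality just uses $\delta<1<\lambda$. (We may assume $\lambda\delta\gtrsim1$, the complementary case being handled directly.)

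Next I would bound each cap piece. For a fixed $\theta$, the Fourier coefficients of $P^\theta f$ are supported on $(S\cap\theta)'$, a set of at most $N$ points of $\mathbb{Z}^2$, so Cauchy--Schwarz and Parseval give $\|P^\theta f\|_{L^\infty}\le N^{1/2}\|P^\theta f\|_{L^2}$; interpolating via $\|g\|_{L^6}^6\le\|g\|_{L^\infty}^4\|g\|_{L^2}^2$ then yields $\|P^\theta f\|_{L^6}\le\|P^\theta f\|_{L^\infty}^{2/3}\|P^\theta f\|_{L^2}^{1/3}\le N^{1/3}\|P^\theta f\|_{L^2}$. Substituting this into the decoupling inequality and using $L^2$ orthogonality together with the bounded overlap of the caps — which gives $\sum_\theta\|P^\theta f\|_{L^2}^2\lesssim\|f\|_{L^2}^2$ since $\sum_\theta|\chi^\theta|^2\lesssim\sum_\theta\chi^\theta\lesssim1$ on the Fourier support of $f$ — I would conclude
\[
\|f\|_{L^6}\ \lesssim_\epsilon\ \lambda^\epsilon\delta^{-\epsilon}N^{1/3}\Big(\sum_{\theta\in\mathcal{C}}\|P^\theta f\|_{L^2}^2\Big)^{1/2}\ \lesssim_\epsilon\ \lambda^\epsilon\delta^{-\epsilon}N^{1/3}\|f\|_{L^2},
\]
which is the desired bound (the precise exponent of $\lambda^\epsilon\delta^{-\epsilon}$ being harmless, as $\epsilon$ is arbitrary).

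The whole argument is soft once $\ell^2$ decoupling is granted, so the only point deserving care is the first one: verifying the numerology that matches $\mathcal{C}$ with the canonical arc family (immediate from the cap dimensions and count recalled above), and the transfer of the Euclidean decoupling theorem to functions on $\mathbb{T}^2$ with Fourier support in $\mathcal{A}_{\lambda,\delta}$. The latter is by now standard and was already carried out in \cite{GermainMyerson1}; it is the place where, conceptually, the ``$\epsilon$ loss'' $\lambda^\epsilon\delta^{-\epsilon}$ enters. I do not anticipate any genuine obstacle — the content of the lemma is really ``$\ell^2$ decoupling $+$ Bernstein''.
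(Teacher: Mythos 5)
Your proposal is correct and follows essentially the same route as the paper: $\ell^2$ decoupling into the caps $\theta$, then a cap-wise Bernstein-type bound exploiting that each $P^\theta f$ has at most $N$ nonzero Fourier coefficients, and finally $L^2$ almost-orthogonality. The only difference is presentational: the paper carries out the transference to $\mathbb{R}^2$ explicitly (rescaling $X=\lambda x$, $K=k/\lambda$ and inserting a bump $\phi(\delta X/\lambda)$ with compactly supported Fourier transform) and then bounds each cap piece via $\|g\|_{L^6}\lesssim\|g\|_{L^2}|\operatorname{Supp}\widehat g|^{1/3}$, whereas you invoke the periodic form of decoupling as a black box from \cite{GermainMyerson1} and use the equivalent torus-side bound $\|P^\theta f\|_{L^\infty}\le N^{1/2}\|P^\theta f\|_{L^2}$ followed by $L^2$–$L^\infty$ interpolation; both give exactly the factor $N^{1/3}$.
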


\begin{proof} The proof follows from ~\cite{GermainMyerson1}, the fundamental ingredient being the $\ell^2$ decoupling of Bourgain-Demeter~\cite{BourgainDemeter3}.
Writing $f$ as the sum of its Fourier series $f = \sum a_k e^{2\pi i k \cdot x}$ and changing variables to $X = \lambda x$ and $K = k/ \lambda$,
\begin{align*}
\| f \|_{L^6(\mathbb{T}^2)} & = \left\| \sum_{k \in \mathbb{Z}^2} a_k e^{2\pi i k \cdot x} \right\|_{L^6(\mathbb{T}^2)} 
 \lesssim \left( \frac{\delta}{\lambda} \right)^{\frac 13} \left\| \phi \left( \frac{\delta X}{\lambda} \right) \sum_{K \in \mathbb{Z}^2/\lambda}  a_{\lambda K} e^{2\pi i K \cdot X} \right\|_{L^{6}(\mathbb{R}^2)},
\end{align*}
where the cutoff function $\phi$ can be chosen to have compactly supported Fourier transform. As a result, the Fourier transform of the function on the right-hand side is supported on a $\delta/\lambda$-neighborhood of $\mathbb{S}^{1}$. It can be written as a sum of functions which are supported on caps $\theta/\lambda$ with dimension $\sim \frac{\delta}{\lambda} \times \frac{\delta^{\frac 12}}{\lambda^{\frac 12}}$ (recall from Section~\ref{sectioncounting} that the collection $\mathcal{C}$ of caps $\theta$ provides an almost disjoint covering of $\mathcal{A}_{\lambda,\delta}$) :
$$
\phi \left( \frac{\delta X}{\lambda} \right) \sum_{K \in \mathbb{Z}^2/\lambda}  a_{\lambda K} e^{2\pi i K \cdot X} = \sum_\theta \phi \left( \frac{\delta X}{\lambda} \right)  \sum_{K \in \mathbb{Z}^2 / \lambda} \chi_\theta(\lambda K) a_{\lambda K} e^{2\pi i K \cdot X}.
$$
By $\ell^2$ decoupling, the $L^6$ norm above is bounded by
$$
\lesssim_\epsilon \left( \frac{\delta}{\lambda} \right)^{\frac 13 - \epsilon} \left( \sum_{\theta} \left\|  \phi \left( \frac{\delta X}{\lambda} \right)  \sum_{K \in \mathbb{Z}^2 / \lambda} \chi_\theta(\lambda K) a_{\lambda K} e^{2\pi i K \cdot X}\right\|_{L^6(\mathbb{R}^2)}^2  \right)^{\frac 12}.
$$
At this point, we use the inequality
$$
\mbox{if $p \geq 2$}, \qquad \| g \|_{L^p(\mathbb{R}^2)} \lesssim \| g \|_{L^2(\mathbb{R}^2)} | \operatorname{Supp} \widehat{g} |^{\frac{1}{2} - \frac{1}{p}},
$$
which follows by applying successively the Hausdorff-Young and H\"older inequalities, and finally the Plancherel equality. We use this inequality for $g(X) =  \phi \left( \frac{\delta X}{\lambda} \right)  \sum_{K}  \chi_\theta(\lambda K) a_{\lambda K} e^{2\pi i K \cdot X}$. Since $\# (S \cap \theta)' \leq N$, its Fourier transform is supported on the union of at most $N$ balls of radius $O( \delta / \lambda) $, giving $| \operatorname{Supp} \widehat{f} | \lesssim N \delta^2 \lambda^{-2}$. Thus the $L^6$ norm we are trying to bound is less than
\begin{align*}
 & \lesssim  \left( \frac{\delta}{\lambda} \right)^{\frac 1 3 -\epsilon} (N \delta^2 \lambda^{-2})^{\frac 13} 
 \left( \sum_{\theta} \left\|  \phi \left( \frac{\delta X}{\lambda} \right)  \sum_{K \in \mathbb{Z}^2 / \lambda} \chi_\theta(\lambda K) a_{\lambda K} e^{2\pi i K \cdot X}\right\|_{L^2(\mathbb{R}^2)}^2  \right)^{\frac 12} 
 \end{align*}
 By almost orthogonality and periodicity of the Fourier series, this is in turn bounded by
\begin{align*}
& \lesssim \left( \frac{\delta}{\lambda} \right)^{1 - \epsilon} N^{\frac 13}  \left\|  \phi \left( \frac{\delta X}{\lambda} \right) \sum_{K \in \mathbb{Z}^2/\lambda} a_{\lambda K} e^{2\pi i K \cdot X} \right\|_{L^2(\mathbb{R}^2)} \lesssim \left( \frac{\delta}{\lambda} \right)^{- \epsilon} N^{\frac 13} \| f \|_{L^2(\mathbb{T}^2)},
\end{align*}
which is the desired estimate.
\end{proof}

\subsection{Interpolation}

Interpolating between the estimates proved in the previous subsections enables us to prove the following theorem.

\label{sectionreal}

\begin{thm} \label{thm1}
\begin{itemize}
\item[(i)] If $p \in [2, 6)$ and $\lambda^{-1+\kappa} < \delta < \lambda^{-\kappa}$ for some $\kappa>0$,
$$
\| P_{\lambda,\delta} \|_{L^2 \to L^p} \lesssim_{p,\kappa} (\lambda \delta)^{\frac{1}{4} - \frac{1}{2p}}.
$$
\item[(ii)] If either $p \in [6, 10]$ and $\delta > \lambda^{-1}$, or $p \in [10,\infty]$ and $\delta > \lambda^{- \frac 13}$,
$$
\| P_{\lambda,\delta} \|_{L^2 \to L^p} \lesssim_\epsilon \lambda^\epsilon\left[ \lambda^{\frac{1}{2} - \frac{2}{p}} \delta^{\frac 12} + (\lambda \delta)^{\frac{1}{4} - \frac{1}{2p}} \right].
$$
\end{itemize}
\end{thm}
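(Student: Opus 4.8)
The plan is a dyadic decomposition of the Fourier support of $f$ by the number of lattice points per cap, followed by interpolation and summation. We may assume $f=P_{\lambda,\delta}f$. Using the partition of unity $(\chi_\theta)_{\theta\in\mathcal C}$ and the multipliers $P^\theta$ of Section~\ref{sectionLp}, write
$$
f=f_0+\sum_s f_s,\qquad f_0=\sum_{\theta\in\mathcal C_0}P^\theta f,\qquad f_s=\sum_{\theta\in\mathcal C_s}P^\theta f,
$$
where $s$ runs over the $O(\log\lambda)$ dyadic values with $1+\lambda^{\frac12}\delta^{\frac32}\ll 2^s\lesssim\lambda^{\frac12}\delta^{\frac12}$; thus $f_s$ carries $\lesssim 2^s$ lattice points in each cap and lives on $\lesssim\lambda\delta 2^{-2s}$ caps (Lemma~\ref{lemmacounting2}), while $f_0$ carries $\lesssim 1+\lambda^{\frac12}\delta^{\frac32}$ points per cap, and since the caps overlap finitely, $\|f_0\|_{L^2}^2+\sum_s\|f_s\|_{L^2}^2\lesssim\|f\|_{L^2}^2$. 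For each piece I would interpolate among four estimates: the trivial $L^2$ bound; the loss-free bilinear $L^4$ estimate of Lemma~\ref{lemmaL4}, $\|f_s\|_{L^4}\lesssim 2^{s/4}\|f_s\|_{L^2}$ (and $\|f_0\|_{L^4}\lesssim(1+\lambda^{\frac12}\delta^{\frac32})^{1/4}\|f_0\|_{L^2}$); the $\ell^2$-decoupling $L^6$ estimate of Lemma~\ref{lemmadecoupling}, $\|f_s\|_{L^6}\lesssim_\epsilon\lambda^\epsilon 2^{s/3}\|f_s\|_{L^2}$; and the Bernstein bound $\|f_s\|_{L^\infty}\le(\#\operatorname{Supp}\widehat{f_s})^{1/2}\|f_s\|_{L^2}$, with $\#\operatorname{Supp}\widehat{f_s}\lesssim\lambda\delta 2^{-s}$ (again Lemma~\ref{lemmacounting2}) and $\#\operatorname{Supp}\widehat{f_0}\lesssim\min(\lambda\delta+\lambda^{\frac12}\delta^{-\frac12},\,\lambda^{1+\epsilon}\delta)$ (Lemma~\ref{lemmacounting1}). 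The two conjectured terms come from two configurations: a full cap ($2^s\sim\lambda^{\frac12}\delta^{\frac12}$, with $O(1)$ caps in $\mathcal C_s$) saturates Bernstein and produces $(\lambda\delta)^{\frac14-\frac1{2p}}$; the bulk piece $f_0$, which may realize the spherical concentration at the origin, produces $\lambda^{\frac12-\frac2p}\delta^{\frac12}$ — but only once the $L^6$ estimate is used, that being the only one of the four reflecting the curvature of the circle.

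For part~(i) ($p<6$, no $\lambda^\epsilon$ allowed), I would interpolate $L^2$ with $L^4$ for every piece when $p\in[2,4]$; when $p\in(4,6)$ I would interpolate $L^4$ with $L^\infty$ for the pieces $f_s$ — so the many-point, \emph{critical} pieces never meet the lossy $L^6$ estimate — and $L^4$ with $L^6$ for $f_0$ only, where the $\lambda^\epsilon$ is harmless because $f_0$ carries $\lesssim\lambda^{\frac12}\delta^{\frac32}\ll\lambda^{\frac12}\delta^{\frac12}$ points per cap and hence lands a fixed power of $\lambda$ below the target (this is where $\lambda^{-1+\kappa}<\delta<\lambda^{-\kappa}$ serves to fix $\epsilon=\epsilon(p,\kappa)$). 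A direct computation gives, for $p\in(4,6)$, $\|f_s\|_{L^p}\lesssim(\lambda\delta)^{\frac12-\frac2p}2^{s(\frac3p-\frac12)}\|f_s\|_{L^2}$, which is geometric and increasing in $s$ (since $3/p>1/2$), peaking at the full cap with value $(\lambda\delta)^{\frac14-\frac1{2p}}\|f_s\|_{L^2}$; the same value bounds the $p\in[2,4]$ pieces and $f_0$. Summing by Cauchy--Schwarz and almost orthogonality, with the geometric $s$-sum comparable to its top term (hence no $\log\lambda$), yields $\|P_{\lambda,\delta}f\|_{L^p}\lesssim_{p,\kappa}(\lambda\delta)^{\frac14-\frac1{2p}}\|f\|_{L^2}$.

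For part~(ii) a $\lambda^\epsilon$ is permitted, so I would interpolate $L^6$ with $L^\infty$ on every piece and sum by the triangle inequality, the $O(\log\lambda)$ disappearing into $\lambda^\epsilon$. This yields $\|f_s\|_{L^p}\lesssim_\epsilon\lambda^\epsilon(\lambda\delta)^{\frac12-\frac3p}2^{s(\frac5p-\frac12)}\|f_s\|_{L^2}$, whose $s$-exponent changes sign at $p=10$: for $p\in[6,10]$ it peaks at the full cap $2^s\sim\lambda^{\frac12}\delta^{\frac12}$ (value $\lambda^\epsilon(\lambda\delta)^{\frac14-\frac1{2p}}$); for $p\ge10$ it peaks at the smallest scale $2^s\sim\lambda^{\frac12}\delta^{\frac32}$, and it is precisely the hypothesis $\delta>\lambda^{-\frac13}$ (so $\lambda^{\frac12}\delta^{\frac32}\ge1$) that keeps this value, and the $f_0$ contribution $\lambda^\epsilon\lambda^{\frac12-\frac2p}\delta^{\frac12}$, inside $\lambda^\epsilon[\lambda^{\frac12-\frac2p}\delta^{\frac12}+(\lambda\delta)^{\frac14-\frac1{2p}}]$; for $p\in[6,10]$ with $\delta<\lambda^{-\frac13}$ (so $f_0$ has $O(1)$ points per cap and $\|f_0\|_{L^6}\lesssim_\epsilon\lambda^\epsilon\|f_0\|_{L^2}$) the divisor bound $\#\operatorname{Supp}\widehat{f_0}\lesssim\lambda^{1+\epsilon}\delta$ gives $\|f_0\|_{L^p}\lesssim\lambda^\epsilon(\lambda\delta)^{\frac12-\frac3p}\lesssim\lambda^\epsilon(\lambda\delta)^{\frac14-\frac1{2p}}$, using $\frac12-\frac3p\le\frac14-\frac1{2p}$ for $p\le10$. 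The endpoint $p=\infty$ is immediate from $\|P_{\lambda,\delta}f\|_{L^\infty}\le(\#(\mathcal A_{\lambda,\delta})')^{1/2}\|f\|_{L^2}\lesssim_\epsilon\lambda^\epsilon\lambda^{\frac12}\delta^{\frac12}\|f\|_{L^2}$ via Lemma~\ref{lemmacounting1}.

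The main obstacle is the tension in part~(i): $\ell^2$ decoupling is the only available tool exhibiting the curvature gain that beats the spherical example, yet it carries an intrinsic $\lambda^\epsilon$. The dyadic pigeonholing by point-count is what resolves this — the decoupling loss is genuinely absorbable only on the sub-critical pieces (few points per cap), whereas the critical pieces, carrying up to $\sim\lambda^{\frac12}\delta^{\frac12}$ points and saturating the bound, must instead be run through the loss-free $L^4$/$L^\infty$ interpolation powered by the cap-counting Lemma~\ref{lemmacounting2}. Verifying that this substitute still reproduces the sharp exponent $(\lambda\delta)^{\frac14-\frac1{2p}}$, and that the resulting $s$-sum stays geometric (so no $\log\lambda$ survives), is the technical heart; the thresholds $p=6$, $p=10$, and the barrier $\delta=\lambda^{-\frac13}$ for large $p$ all fall out of the bookkeeping of the two competing exponents.
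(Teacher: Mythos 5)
Your proposal follows the paper's proof essentially line by line: same decomposition $f=f_0+\sum_s f_s$ by point-count per cap, same four basic estimates (trivial $L^2$, bilinear $L^4$ via Lemma~\ref{lemmaL4}, decoupling $L^6$ via Lemma~\ref{lemmadecoupling}, Bernstein $L^\infty$ via Lemma~\ref{lemmacounting2}), and the same interpolation scheme with the exponents $2^{s(3/p-1/2)}$ for $4\le p\le 6$ and $2^{s(5/p-1/2)}$ for $p\ge 6$ driving the geometric sums, with the thresholds $p=6$, $p=10$, and $\delta=\lambda^{-1/3}$ emerging exactly as in the paper. Your observation that the $\lambda^\epsilon$ from decoupling is confined to $f_0$ and absorbed by the power saving $\delta^{1/2-1/p}$ under the hypothesis $\lambda^{-1+\kappa}<\delta<\lambda^{-\kappa}$ is precisely the mechanism the paper uses to make part~(i) loss-free.
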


\begin{proof} \underline{Decomposition of $P_{\lambda,\delta}$}
Recall that the Fourier multipliers $P^\theta$ were defined in the previous section. They are now used to set
$$
P^s = \sum_{\theta \in \mathcal{C}_s} P^\theta \qquad \mbox{and} \qquad P^0 = \sum_{\theta \in \mathcal{C}_0} P^\theta
$$
as well as
$$
P^s_{\lambda,\delta} = P_{\lambda,\delta} P^s \qquad \mbox{and} \qquad P^0_{\lambda,\delta} = P_{\lambda,\delta} P^0.
$$
We will split $P_{\lambda,\delta}$ into
$$
P_{\lambda,\delta} = P^0_{\lambda,\delta} + \sum_s P^s_{\lambda,\delta},
$$
and estimate the different summands on the right-hand side by interpolating between $L^2 \to L^p$ bounds, with $p=4,6,\infty$.

\medskip
\noindent \underline{Basic bounds}
We learn from Lemma~\ref{lemmaL4} and Lemma~\ref{lemmadecoupling} that, if $\delta \gtrsim \lambda^{-1}$ and $2^s \gg 1 + \lambda^{\frac{1}{2}} \delta^{\frac{3}{2}}$,
\begin{align*}
& \| P_{\lambda,\delta}^0 \|_{L^2 \to L^4} \lesssim  \lambda^{\frac 18} \delta^{\frac 38} + 1 \\
& \| P_{\lambda,\delta}^s \|_{L^2 \to L^4} \lesssim 2^{\frac s4}  \\
& \| P_{\lambda,\delta}^0 \|_{L^2 \to L^6} \lesssim_\epsilon \lambda^\epsilon \left[ \lambda^{\frac 16} \delta^{\frac 12} + 1 \right] \\
& \| P^s_{\lambda,\delta} f \|_{L^2 \to L^6} \lesssim_\epsilon \lambda^\epsilon 2^{\frac{s}{3}}.
\end{align*}
Furthermore, lemmas~\ref{lemmacounting2} and~\ref{lemmacounting1} give the bounds
\begin{align*}
& \| P_{\lambda,\delta}^0 \|_{L^2 \to L^\infty} \lesssim \lambda^\epsilon (\lambda \delta)^{\frac{1}{2}} \\ 
& \| P_{\lambda,\delta}^s \|_{L^2 \to L^\infty} \lesssim \lambda^{\frac 12} \delta^{ \frac 12} 2^{- \frac s2}  \\ 
\end{align*}

\medskip
\noindent \underline{The case $2 \leq p \leq 4$} By the basic bounds above,
$$
\| P_{\lambda,\delta} \|_{L^2 \to L^4} \lesssim \| P_{\lambda,\delta}^0 \|_{L^2 \to L^4} + \sum_j \| P_{\lambda,\delta}^s \|_{L^2 \to L^4} \lesssim  \lambda^{\frac 18} \delta^{\frac 38} + 1 + \sum_{1 \leq 2^s \leq \lambda^{\frac 12} \delta^{\frac 12}}  2^{\frac s 4} \lesssim (\lambda \delta)^{\frac 1 8}.
$$
This is the desired bound if $p=4$, and the case $2 \leq p \leq 4$ follows from interpolation with the trivial case $p=2$.

\medskip
\noindent \underline{Bounding $P^0_{\lambda,\delta}$}
Interpolating between the basic bounds for $P^0_{\lambda,\delta}$ bounds gives, if $4 \leq p \leq 6$,
\begin{align*}
& \| P^0_{\lambda,\delta} \|_{L^2 \to L^p} \lesssim \lambda^\epsilon \left[ 1 + \lambda^{\frac 14 - \frac 1 {2p}} \delta^{\frac 34 - \frac 3 {2p}} \right],
\end{align*}
which is consistent with the conjecture if $\lambda^{-1+ \kappa} < \delta < \lambda^\kappa$.

If $6 \leq p \leq \infty$, we obtain instead
\begin{equation*}
 \| P^0_{\lambda,\delta} \|_{L^2 \to L^p} \lesssim \lambda^\epsilon \left[ (\lambda \delta)^{\frac 12 - \frac 3p} + \lambda^{\frac 12 - \frac 2p} \delta^{\frac 12} \right].
\end{equation*}
 This is consistent with the conjecture with $\epsilon$ loss if $(\lambda \delta)^{\frac 12 - \frac 3p} + \lambda^{\frac 12 - \frac 2p} \delta^{\frac 12} \lesssim \lambda^{\frac 1 2 - \frac 2p} \delta^{\frac 12} + (\lambda \delta)^{\frac 14 - \frac 1 2p}$, which is the case if $\delta > \lambda^{-\frac 13}$ or $p \leq 10$.

\medskip
\noindent \underline{Bounding $\sum_s P^s_{\lambda,\delta}$ if $4 \leq p \leq 6$.} Interpolating between the basic bounds for $P^s_{\lambda,\delta}$ from $L^2$ to $L^4$ and $L^2$ to $L^\infty$,\begin{align*}
\| P^s_{\lambda,\delta} \|_{L^2 \to L^p} \lesssim \lambda^{\frac 12 - \frac 2 {p}} \delta^{\frac 12 - \frac 2 {p}} 2^{s \left( -\frac 12 + \frac 3p \right)} \qquad \mbox{if $2^s \gg 1 + \lambda^{\frac 12} \delta^{\frac 32}$} .
\end{align*}
Therefore, if $p<6$,
$$
\sum_{1 + \lambda^{\frac 12} \delta^{\frac 32} \ll 2^s \lesssim \lambda^{\frac 12} \delta^{ \frac 1 2}} \| P^s_{\lambda,\delta} \|_{L^2 \to L^p} \lesssim  \lambda^{\frac 14 - \frac 1 {2p}} \delta^{\frac 14 - \frac 1 {2p}},
$$
which is consistent with the conjecture.

\medskip
\noindent \underline{Bounding $\sum_s P^s_{\lambda,\delta}$ if $p \geq 6$.} Interpolating between the basic bounds for $P^s_{\lambda,\delta}$ from $L^2$ to $L^6$ and $L^2$ to $L^\infty$,
\begin{equation*}
\| P^s_{\lambda,\delta} \|_{L^2 \to L^p} \lesssim_\epsilon \lambda^\epsilon \lambda^{\frac 12 - \frac 3 {p}} \delta^{\frac 12 - \frac 3 {p}} 2^{s \left( -\frac 12 + \frac 5p \right)} \qquad \mbox{if $2^s \gg 1 + \lambda^{\frac 12} \delta^{\frac 32}$} .
\end{equation*}
Summing over $2^j$ gives if $p \leq 10$
\begin{align*}
\sum_{1 + \lambda^{\frac 12} \delta^{\frac 32} \ll 2^s \lesssim \lambda^{\frac 12} \delta^{\frac 12}} \| P^j_{\lambda,\delta} \|_{L^2 \to L^p} \lesssim \lambda^\epsilon \lambda^{\frac 12 - \frac 3 {p}} \delta^{\frac 12 - \frac 3 {p}}  \sum_{ 2^s \lesssim \lambda^{\frac 12}\delta^{\frac 1 2}} 2^{s \left( -\frac 12 + \frac 5p \right)} \sim \lambda^\epsilon (\lambda \delta)^{\frac{1}{4} - \frac{1}{2p}},
\end{align*}
which is consistent with the conjecture with $\epsilon$ loss. 

If we assume now that $p \geq 10$,
\begin{align*}
\sum_{1 + \lambda^{\frac 12} \delta^{\frac 32} \ll 2^s \lesssim \lambda^{\frac 12} \delta^{\frac 12}} \| P^j_{\lambda,\delta} \|_{L^2 \to L^p} & \lesssim  \lambda^\epsilon \lambda^{\frac 12 - \frac 3 {p}} \delta^{\frac 12 - \frac 3 {p}} \sum_{2^s \gg 1 + \lambda^{\frac 12} \delta^{\frac 32}}  2^{s \left( -\frac 12 + \frac 5p \right)}\\
& \sim \lambda^\epsilon \left[ \lambda^{\frac 14 - \frac 1 {2p}} \delta^{- \frac 14 + \frac 9 {2p}} + (\lambda \delta)^{\frac{1}{4} - \frac 1{2p}} \right]
\end{align*}
This is $\lesssim \lambda^\epsilon \left[ \lambda^{\frac 1 2 - \frac 2 p} \delta^{\frac 12} + (\lambda \delta)^{\frac 14 - \frac 1 {2p}} \right]$ if and only if $\delta > \lambda^{-\frac{1}{3}}$.
\end{proof}

\section{Dyadic decomposition of the kernel}

In this section, we will prove the following theorem, which validates Conjecture~\ref{conjproj} for some range of $(p,\lambda,\delta)$. The idea of the proof is to decompose dyadically (in the Poisson summation formula) the kernel of the spectral projector.

\label{sectiondyadic}

\begin{thm} \label{thm2} For $p \geq 6$ and $\epsilon>0$,
$$
\| P_{\lambda,\delta} \|_{L^2 \to L^p} \lesssim_\epsilon \lambda^{\frac{1}{2} - \frac{2}{p}} \delta^{\frac 12} \qquad \mbox{if} \qquad \delta > \min \left( \lambda^{- \frac{1 - \frac{6}{p}}{3 - \frac 2p}} , \lambda^{-\frac{10 - \frac{64}{p}}{29 - \frac{14}{p}}+\epsilon} \right).
$$
\end{thm}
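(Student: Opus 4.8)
The plan is to run a Stein--Tomas-type argument directly on the convolution kernel. Since $P_{\lambda,\delta}$ is a self-adjoint projection, $\|P_{\lambda,\delta}\|_{L^2\to L^p}^2=\|P_{\lambda,\delta}\|_{L^{p'}\to L^p}$, so it is enough to prove the kernel estimate $\|\Phi_{\lambda,\delta}*f\|_{L^p(\mathbb{T}^2)}\lesssim \lambda^{1-\frac4p}\delta\,\|f\|_{L^{p'}(\mathbb{T}^2)}$. By Poisson summation (after a harmless smoothing of $\mathbf 1_{\mathcal{A}_{\lambda,\delta}}$ whose error is absorbed into a slightly fattened annulus, or directly from $\mathbf 1_{\mathcal{A}_{\lambda,\delta}}=\mathbf 1_{B_{\lambda+\delta}}-\mathbf 1_{B_{\lambda-\delta}}$ and the explicit Bessel formula for $\widehat{\mathbf 1_{B_R}}$) the kernel $\Phi_{\lambda,\delta}$ is the $\mathbb{Z}^2$-periodization of $\widehat{\mathbf 1_{\mathcal{A}_{\lambda,\delta}}}$, whose stationary-phase asymptotics give
$$
\big|\widehat{\mathbf 1_{\mathcal{A}_{\lambda,\delta}}}(y)\big|\lesssim \min\Big(\lambda\delta,\ \delta\lambda^{\frac12}|y|^{-\frac12},\ \lambda^{\frac12}|y|^{-\frac32}\Big),
$$
with the oscillation $e^{\pm 2\pi i\lambda|y|}$ present in the intermediate range $\lambda^{-1}\lesssim|y|\lesssim\delta^{-1}$; since in the range under consideration one always has $\delta>\lambda^{-1/2}$, only this intermediate regime will be relevant.

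I would then split $\widehat{\mathbf 1_{\mathcal{A}_{\lambda,\delta}}}=\sum_{R}K_R$ dyadically according to the physical scale $|y|\sim R$, $R$ running over dyadic values $\ge\lambda^{-1}$ together with a central piece supported in $|y|\lesssim\lambda^{-1}$, and let $\Phi^{(R)}$ be the $\mathbb{Z}^2$-periodization of $K_R$, so that $\Phi_{\lambda,\delta}*f=\sum_R \Phi^{(R)}*f$. For each $R$ I interpolate (Riesz--Thorin) the trivial $L^1\to L^\infty$ bound, of norm $\|\Phi^{(R)}\|_{L^\infty(\mathbb{T}^2)}$, against the $L^2\to L^2$ bound, of norm $\sup_{m\in\mathbb{Z}^2}|\widehat{K_R}(m)|$, to get
$$
\|\Phi^{(R)}*f\|_{L^p}\lesssim \|\Phi^{(R)}\|_{L^\infty}^{1-\frac2p}\Big(\sup_{m\in\mathbb{Z}^2}|\widehat{K_R}(m)|\Big)^{\frac2p}\|f\|_{L^{p'}}.
$$
The Fourier-side factor is geometric: $\widehat{K_R}=\mathbf 1_{\mathcal{A}_{\lambda,\delta}}*\widehat{\psi_R}$ with $\widehat{\psi_R}$ essentially a bump of height $R^2$ at scale $R^{-1}$, hence $\sup_m|\widehat{K_R}(m)|\lesssim R^2\,|\mathcal{A}_{\lambda,\delta}\cap B(m,R^{-1})|\lesssim R\delta$ when $R\lesssim\delta^{-1}$. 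The physical-side factor is where arithmetic enters: for $R\lesssim1$ there are $O(1)$ lattice translates, while for $R\gtrsim1$ roughly $R^2$ of them overlap at each point, and combining the pointwise bound on $K_R$ with this overlap reduces $\|\Phi^{(R)}\|_{L^\infty}$ to the exponential sum $\big|\sum_{n:\,|x+n|\sim R}e^{\pm2\pi i\lambda|x+n|}\big|$.

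Summing the (genuinely geometric, hence loss-free) series over $R$: the central piece reproduces the target $\lambda^{1-4/p}\delta$ exactly by the Stein--Tomas numerology ($\|\Phi^{(0)}\|_\infty\lesssim\lambda\delta$, multiplier norm $\lesssim\lambda^{-1}\delta$); the pieces $\lambda^{-1}\lesssim R\lesssim1$ form a geometric series dominated by $R\sim\lambda^{-1}$ and are controlled once $p\ge6$; and the decisive contributions are those with $1\lesssim R\lesssim\delta^{-1}$. Estimating the exponential sum by its number of terms $\sim R^2$ gives $\|\Phi^{(R)}\|_\infty\lesssim R^{3/2}\delta\lambda^{1/2}$, the interpolated bound $R^{3/2-1/p}\delta\lambda^{1/2-1/p}$ grows in $R$, so the worst case is $R\sim\delta^{-1}$, and a one-line computation shows it is $\lesssim\lambda^{1-4/p}\delta$ exactly when $\delta>\lambda^{-(1-6/p)/(3-2/p)}$ — the first exponent in the minimum, obtained with no number theory beyond counting.

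To go below, one must beat the trivial bound $R^2$ for $\sum_{n:\,|x+n|\sim R}e^{\pm2\pi i\lambda|x+n|}$. Applying Poisson summation again turns it into a sum of Fourier transforms concentrated on the thin annulus $\{m:\ \big||m|-\lambda\big|\lesssim R^{-1}\}$, each of size $\lesssim R^{3/2}\lambda^{-1/2}$, so the problem becomes counting lattice points in thin annuli, i.e.\ a van der Corput / exponent-pair estimate for one-dimensional exponential sums with phase built from $|\xi|$ — a classical topic in analytic number theory, which carries over verbatim to a general curve with nonvanishing curvature (the $1$-homogeneous phase $\phi$ of the footnote). Inserting such a bound into the optimization of the dyadic sum, at the cost of $\lambda^\epsilon$, improves the admissible range to $\delta>\lambda^{-(10-64/p)/(29-14/p)+\epsilon}$. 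I expect this to be the main obstacle: one needs a pointwise exponential-sum bound strong enough to push past the $\lambda^{-1/3}$ barrier, and then to track it through the interpolation against every other dyadic scale so that the $R\sim\delta^{-1}$ piece, balanced against the exponential-sum saving, pins down precisely the exponent $\frac{10-64/p}{29-14/p}$; everything else is geometry and Riesz--Thorin interpolation, and this is also why the method stalls not far below $\lambda^{-1/3}$.
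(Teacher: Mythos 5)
Your proposal is, in substance, the paper's own proof: reduce to an $L^{p'}\to L^p$ bound on the convolution operator, periodize $\widehat{\mathbf 1_{\mathcal A_{\lambda,\delta}}}$ via Poisson summation, decompose dyadically in physical space, interpolate the geometric $L^2\to L^2$ bound ($\sim R\delta$ for the piece at scale $R\lesssim\delta^{-1}$) against the arithmetic $L^1\to L^\infty$ bound, sum, and optimize. Your identification of $R\sim\delta^{-1}$ as the worst scale, your interpolated bound $R^{3/2-1/p}\lambda^{1/2-1/p}\delta$, and the resulting threshold $\delta>\lambda^{-(1-6/p)/(3-2/p)}$ all match the paper exactly.

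Two points deserve flagging, one a genuine (fixable) gap and one a looser end.

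First, the mollification is not merely ``harmless.'' If you work with the sharp cutoff $\mathbf 1_{\mathcal A_{\lambda,\delta}}$, the periodization $\sum_m\widehat{\mathbf 1_{\mathcal A_{\lambda,\delta}}}(m-x)$ is only conditionally convergent: for $|y|\gg\delta^{-1}$ the kernel decays like $\lambda^{1/2}|y|^{-3/2}$ and the pieces $\Phi^{(R)}$ with $R\gg\delta^{-1}$ have $L^\infty$ norm $\lesssim\lambda^{1/2}R^{1/2}$, which grows in $R$, so your dyadic sum does not obviously terminate or converge. The paper's choice $\chi_{\lambda,\delta}=\delta^{-1}\chi(\delta^{-1}\cdot)*d\sigma_\lambda$ with $\widehat\chi$ compactly supported is precisely what makes $\widehat{\chi_{\lambda,\delta}}$ vanish for $|\xi|\gtrsim\delta^{-1}$ and truncates the sum. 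But then $P^\flat_{\lambda,\delta}=\chi_{\lambda,\delta}(D)$ is no longer a projection, so your shortcut $\|P_{\lambda,\delta}\|_{L^2\to L^p}^2=\|P_{\lambda,\delta}\|_{L^{p'}\to L^p}$ does not apply directly; the paper instead runs $TT^*$ for $\sqrt{\chi_{\lambda,\delta}}(D)$ and uses $\chi_{\lambda,\delta}\ge c>0$ on $\mathcal A_{\lambda,\delta}$ to transfer the bound back to $P_{\lambda,\delta}$. This extra step is needed and is missing from your sketch.

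Second, your treatment of the exponential sum is a plausible variant rather than what the paper does: you propose to Poisson-dualize the two-dimensional sum $\sum_{|n-x|\sim R}e^{\pm 2\pi i\lambda|n-x|}$ into a lattice-point count in a thin annulus and invoke an exponent pair, whereas the paper applies M\"uller's two-dimensional van der Corput estimate (Theorem 2 of \cite{Muller}, with the specific choice $q=3$) directly to the sum. Your route is morally equivalent, but you do not actually choose an exponent pair, insert it, and carry the optimization through the dyadic sum; as written, the exponent $\frac{10-64/p}{29-14/p}$ is asserted, not derived. The central piece you also handle slightly differently (a further dyadic split down to $R\sim\lambda^{-1}$, rather than the paper's Young/Stein--Tomas bound), which works for $p>6$ but gives only a logarithmic bound at $p=6$ — harmless since the theorem is vacuous there.

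Bottom line: same decomposition, same interpolation scheme, same first threshold; the one genuine gap is the incompatibility between the sharp-cutoff $TT^*$ shortcut and the (necessary) mollification/truncation, and the second threshold requires a concrete exponential-sum input you have named but not supplied.
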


\begin{rem}
The proof given below uses a pointwise bound on a two-dimensional exponential sum, which is borrowed from M\"uller~\cite{Muller} and enables us to prove the theorem for scales $\delta$ (moderately) smaller than $\lambda^{-\frac 1 3}$. The bound in~\cite{Muller} has the advantage of being robust and admitting a rather simple proof, by the Van der Corput method, while providing an improvement over the trivial estimate. But it is certainly not optimal; in particular, the methods recounted in Huxley~\cite{Huxley} leading to Huxley~\cite{Huxley2} could give further improvements, though they do not seem immediately applicable to the sum under under consideration.
\end{rem}

\begin{proof}
\noindent \underline{Decomposition of the kernel}
For technical reasons, it will be more convenient to consider a mollified version of the projector $P_{\lambda,\delta}$: let 
$$
P_{\lambda,\delta}^\flat e^{2\pi ik \cdot x} = \chi_{\lambda,\delta} (k)e^{2\pi ik \cdot x} ,
$$
where the function $\chi_{\lambda,\delta}$ is a 
nonnegative and smooth cutoff function adapted to the annulus $\mathcal{A}_{\lambda,\delta}$, namely $\chi_{\lambda,\delta}(x) >c>0$ if $x \in \mathcal{A}_{\lambda,\delta}$. 
To be more specific, it is defined as follows: consider the superficial measure $d\sigma_\lambda$ induced by the Lebesgue measure on the circle of center 0 and radius $\lambda$ (in other words, $d\sigma_\lambda$ is the uniform measure on that circle with total mass $2\pi \lambda$). Consider furthermore a positive function $\chi$ whose Fourier transform is compactly supported. Finally, let
$$
\chi_{\lambda,\delta} = \delta^{-1} \chi (\delta^{-1} \cdot) * d\sigma_\lambda.
$$

We now introduce a dyadic partition of unity, namely $\mathcal{C}_0^\infty$, nonnegative functions $\varphi$ and $\psi$ such that
$$
\varphi(x) + \sum_{M \in 2^\mathbb{N}} \psi \left( \frac{x}{M} \right) =1,
$$
and furthermore $\varphi$ is supported in a ball, and $\psi$ in an annulus.

Denoting $\Phi_{\lambda,\delta}^\flat$ the kernel of $P_{\lambda,\delta}^\flat$, it can be written by Poisson summation as
$$
\Phi_{\lambda,\delta}^\flat(x) = \sum_{k \in \mathbb{Z}^2} \chi_{\lambda,\delta}(k) e^{2\pi i k \cdot x} = \sum_{m \in \mathbb{Z}^2} \widehat{\chi_{\lambda,\delta}} (m - x),
$$
(where $\widehat{\cdot}$ stands for the Fourier transform on $\mathbb{R}$) 
and further decomposed, with the help of the partition of unity, into
\begin{align*}
\Phi_{\lambda,\delta}^\flat(x) & = \sum_{m \in \mathbb{Z}^2} \varphi(m - x) \widehat{\chi_{\lambda,\delta}} (m - x) + \sum_{m \in \mathbb{Z}^2} \sum_{M \in 2^\mathbb{N}} \psi \left( \frac{m-x}{M} \right) \widehat{\chi_{\lambda,\delta}} (m - x) \\
& = \Phi_{\lambda,\delta}^{\flat,0}(x) + \sum_{M \in 2^\mathbb{N}} \Phi_{\lambda,\delta}^{\flat,M}(x).
\end{align*}

Finally, the operators associated to these convolution kernels are denoted by $P_{\lambda,\delta}^{\flat,0}$ and $P_{\lambda,\delta}^{\flat,M}$.

\bigskip

\noindent \underline{Asymptotics of $\widehat{\chi_{\lambda,\delta}}$.} Denoting $J(\xi)$ for the Fourier transform of $d\sigma_1$ (superficial measure on the unit sphere), it follows from the definition of $\chi_{\lambda,\delta}$ that
$$
\widehat{\chi_{\lambda,\delta}}(\xi) = \lambda \delta J(\lambda \xi) \widehat{\chi}(\delta \xi).
$$
The function $J$ is smooth, and its asymptotic expansion is well-known
\begin{equation}
\label{asymptoticsJ}
J(\xi) \sim \frac{e^{i|\xi|}}{|\xi|^{\frac 12}}  \sum_{j=0}^\infty a_j |\xi|^{-j} + \frac{e^{-i|\xi|}}{|\xi|^{\frac 12}}  \sum_{j=0}^\infty b_j |\xi|^{-j}.
\end{equation}
We refer to~\cite{Stein}, Chapter VIII, for the proof of this statement and the meaning of the series in the equivalent (see in particular Proposition 3). 

\medskip

\noindent \underline{Bounding $P_{\lambda,\delta}^{\flat,0}$.} By Young's inequality and the above expansion,
$$
\| P_{\lambda,\delta}^{\flat,0} \|_{L^{p'} \to L^p} \lesssim \| \Phi_{\lambda,\delta}^{\flat,0} \|_{L^{\frac p 2}} \lesssim \| \widehat{\chi_{\lambda,\delta}} \|_{L^{\frac p 2}} \lesssim \lambda \delta \| J(\lambda \xi) \|_{L^{\frac p 2}} \lesssim \lambda^{1- \frac 4p} \delta \qquad \mbox{if $p>8$}.
$$
To treat the case $p \in [6,8]$, we can invoke the fact that the operator on $\mathbb{R}^2$ with symbol $\chi_{\lambda,\delta}(\xi)$ has $L^{p'} \to L^p$ operator norm $\lesssim \lambda^{1 - \frac{4}{p}} \delta$, which follows from the Stein-Tomas theorem~\cite{Tomas,Stein}. As a consequence, the operator with convolution kernel $\varphi \widehat{\chi_{\lambda,\delta}}$ has operator norm $\lesssim \lambda^{1 - \frac{4}{p}} \delta$ (indeed, the function $\varphi \widehat{\chi_{\lambda,\delta}}$ can be written under the form $\delta \widehat{\chi_{\lambda,1}}$ by modifying the cutoff function).

Since $\varphi$ is compactly supported, this implies the desired bound for the operator $P_{\lambda,\delta}^{\flat,0}$, whose convolution kernel is given by the periodization of $\varphi \widehat{\chi_{\lambda,\delta}}$.

\medskip

\noindent \underline{Bounding $P_{\lambda,\delta}^{\flat,M}$.} This will be achieved by interpolating between $L^2 \to L^2$ and $L^1 \to L^\infty$ bounds, in a manner which is reminiscent of the classical proof of the Stein-Tomas theorem~\cite{Tomas}.
Before doing so, we observe that the range of $M$ can be restricted to $M \lesssim \delta^{-1}$; this follows from the fact that $\widehat{\chi}$ is compactly supported, and the formula for $\widehat{\chi_{\lambda,\delta}}$ above.

\begin{itemize}
    \item To obtain the $L^2 \to L^2$ bound, we deduce from the definition of the kernel $\Phi_{\lambda,\delta}^{\flat,M}$ and Poisson summation that $P_{\lambda,\delta}^{\flat,M}$ is the Fourier multiplier on the torus with symbol $M^2 \psi(M\cdot) * \chi_{\lambda,\delta}$. Therefore,
    $$
\| P_{\lambda,\delta}^{\flat,M} \|_{L^2 \to L^2} \lesssim \| M^2 \psi(M\cdot) * \chi_{\lambda,\delta} \|_{L^\infty} \lesssim M \delta \qquad \mbox{if $M \lesssim \delta^{-1}$}
$$
\item To obtain the $L^1 \to L^\infty$ bound, we rely on exponential sum estimates. Reducing the asymptotics of $\widehat{\chi_{\lambda,\delta}}$ to its leading order (lower order terms being easier to treat), we need to bound
$$
S_{\lambda,M,x} = \lambda^{\frac 12} \delta \sum_{\substack{n \in \mathbb{Z}^2}} \psi\left( \frac {n-x} M \right) \frac{e^{i \lambda |n-x|}}{\langle n - x \rangle^{\frac 12}}.
$$
A first obvious bound is
$$
| S_{\lambda,M,x} | \lesssim \lambda^{\frac 12} \delta M^{\frac 32}.
$$
Furthermore, this sum can be written under the form $\lambda^{\frac 12} \delta M^{-\frac 12} \sum_{\substack{ |m| \sim M}} W(m) e^{i f(m)}$, which is the form considered in~\cite{Muller} (see also~\cite{Guo}). In order to apply Theorem 2 in~\cite{Muller}, we note that $W$ and $f$ satisfy the required derivative bounds; as for the condition on the determinant of iterated derivatives, it is verified thanks to Lemma 3 in that same paper. This yields the bound
$$
| S_{\lambda,M,x} | \lesssim_\epsilon \lambda^{\frac 12 + \omega} \delta M^{\frac{3}{2}-(q+1)\omega + \epsilon} \quad \mbox{if $\lambda \geq M^{q - 2 + \frac{2}{Q}}$}, \quad \mbox{with} \quad \omega = \frac{2}{4(Q-1)+2Q}, \quad Q =2^q.
$$
Choosing $q = 3$, this means that
\begin{equation*}
| S_{\lambda,M} | \lesssim_\epsilon \lambda^{\frac {6} {11}} \delta M^{\frac{29}{22} + \epsilon} \qquad \mbox{if $\lambda > M^{\frac 54}$}.
\end{equation*}
Overall, still under the assumption that $\lambda > M^{\frac 54}$,
$$
\| P_{\lambda,\delta}^{\flat,M} \|_{L^1 \to L^\infty} \lesssim \| \Phi_{\lambda,\delta}^{\flat,M} \|_{L^\infty} \lesssim \min(\lambda^{\frac 12} \delta M^{\frac 32} ,  \lambda^{\frac {6} {11}} \delta M^{\frac{29}{22} + \epsilon})
$$
\end{itemize}
Interpolating between these two bounds, we find that
$$
\| P_{\lambda,\delta}^{\flat,M} \|_{L^{p'} \to L^p} \lesssim_\epsilon \min ( M^{\frac 32 - \frac 1p} \lambda^{\frac 12 - \frac 1p} \delta, M^{\frac {29} {22} - \frac 7 {11p} + \epsilon} \lambda^{\frac 6 {11} - \frac{12}{11p}} \delta).
$$
Since $M \lesssim \delta^{-1}$, this is $< \lambda^{1 - \frac{4}{p}} \delta$ provided
$$
\delta > \min \left( \lambda^{- \frac{1 - \frac{6}{p}}{3 - \frac 2p}} , \lambda^{-\frac{10 - \frac{64}{p}}{29 - \frac{14}{p}}+\epsilon} \right).
$$

\medskip
\noindent \underline{Conclusion of the argument} Reconstructing $P^{\flat}_{\lambda,\delta}$ as the sum of $P^{\flat,M}_{\lambda,\delta}$ for $0 \leq M \lesssim \delta^{-1}$ gives the $L^{p'} \to L^p$ bound $\lambda^{1 - \frac{4}{p}} \delta$. By the classical $TT^*$ argument, this implies a $L^2 \to L^p$ bound $\lambda^{\frac 12 - \frac{2}{p}} \delta^{\frac 12}$ for the operator with symbol $\sqrt{\chi_{\lambda,\delta}}$, from which we deduce the desired $L^2 \to L^p$ bound for $P_{\lambda,\delta}$.\end{proof}

\section{The conjecture on the $L^p$ norm of the convolution kernel}

This section is dedicated to Conjecture~\ref{conjkernel}, which was introduced in the introduction. We show how it is related to Conjecture~\ref{conjproj} on the one hand, to additive energies on the other hand, and then use small cap decoupling to prove it with an $\epsilon$ loss,  if $p \leq 6$ or $\delta > \lambda^{-\frac 13}$.

\label{sectionconvolution}

\subsection{Partial equivalence of Conjecture~\ref{conjproj} and Conjecture~\ref{conjkernel}}

\begin{lem} \label{equivalence}
\begin{itemize}
\item[(i)] If $\delta > \lambda^{-1 + \frac{8}{p+2}}$, Conjecture~\ref{conjproj} for $(\delta,\lambda,p)$  implies Conjecture~\ref{conjkernel} for $(\delta,\lambda,p)$.
\item[(ii)] If $p \geq 4$ and $\delta > \lambda^{-1 + \frac{4}{p}}$, Conjecture~\ref{conjkernel} for $(\delta,\lambda,p)$  implies Conjecture~\ref{conjproj} for $(\delta,\lambda,2p)$.
\end{itemize}
\end{lem}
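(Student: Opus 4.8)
The plan is to use three soft structural facts about the projector: (a) $P_{\lambda,\delta}$ is a self-adjoint projection, $P_{\lambda,\delta}^2=P_{\lambda,\delta}=P_{\lambda,\delta}^*$; (b) $P_{\lambda,\delta}f=\Phi_{\lambda,\delta}*f$; and (c) the Fourier coefficients of $\Phi_{\lambda,\delta}$ are $\{0,1\}$-valued and supported exactly on $\mathcal{A}_{\lambda,\delta}\cap\mathbb{Z}^2$, so that $P_{\lambda,\delta}(\Phi_{\lambda,\delta})=\Phi_{\lambda,\delta}$ (equivalently $\Phi_{\lambda,\delta}*\Phi_{\lambda,\delta}=\Phi_{\lambda,\delta}$). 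Throughout write $N=\#(\mathcal{A}_{\lambda,\delta})'$, so that by Plancherel $\|\Phi_{\lambda,\delta}\|_{L^2}^2=N$.

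For part (i) I would apply the hypothesised $L^2\to L^p$ bound to the particular input $\Phi_{\lambda,\delta}$. By fact (c),
$$\|\Phi_{\lambda,\delta}\|_{L^p}=\|P_{\lambda,\delta}(\Phi_{\lambda,\delta})\|_{L^p}\le\|P_{\lambda,\delta}\|_{L^2\to L^p}\,\|\Phi_{\lambda,\delta}\|_{L^2}=N^{1/2}\,\|P_{\lambda,\delta}\|_{L^2\to L^p}.$$
Since $\delta>\lambda^{-1+8/(p+2)}$ is precisely the regime in which Conjecture~\ref{conjproj} collapses to its Knapp term $\lambda^{\frac12-\frac2p}\delta^{\frac12}$, and since the divisor bound gives $N\lesssim\lambda^{1+\epsilon}\delta$ (Lemma~\ref{lemmacounting1}), this yields $\|\Phi_{\lambda,\delta}\|_{L^p}\lesssim\lambda^{1-\frac2p+\frac\epsilon2}\delta$. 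As $\delta>\lambda^{-1+8/(p+2)}\ge\lambda^{-1+4/p}$ for $p\ge2$, the monomial $\lambda^{1-\frac2p}\delta$ is exactly the dominant term of Conjecture~\ref{conjkernel} in this range, so (i) follows — with a $\lambda^\epsilon$ loss, which I expect to be unavoidable here because $\|\Phi_{\lambda,\delta}\|_{L^2}^2$ is literally the lattice count $N$.

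For part (ii) I would run the $TT^*$/Stein--Tomas argument in reverse. Using (a) and (b),
$$\|P_{\lambda,\delta}\|_{L^2\to L^{2p}}^2=\|P_{\lambda,\delta}P_{\lambda,\delta}^*\|_{L^{(2p)'}\to L^{2p}}=\|P_{\lambda,\delta}\|_{L^{(2p)'}\to L^{2p}}\le\|\Phi_{\lambda,\delta}\|_{L^p},$$
the last step being Young's inequality with the exponent relation $1+\frac1{2p}=\frac1p+\bigl(1-\frac1{2p}\bigr)$, so the convolution exponent is exactly $p$. Plugging in Conjecture~\ref{conjkernel} for $(\delta,\lambda,p)$ and using the hypothesis $\delta>\lambda^{-1+4/p}$ — exactly the range where the kernel bound collapses to $\lambda^{1-\frac2p}\delta$ — gives $\|P_{\lambda,\delta}\|_{L^2\to L^{2p}}\lesssim(\lambda^{1-\frac2p}\delta)^{1/2}=\lambda^{\frac12-\frac1p}\delta^{1/2}$. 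This is the Knapp term of Conjecture~\ref{conjproj} at exponent $2p$, and since $\delta>\lambda^{-1+4/p}>\lambda^{-1+8/(2p+2)}$ it is the dominant term there, so Conjecture~\ref{conjproj} for $(\delta,\lambda,2p)$ follows with no loss.

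The argument is entirely soft, so I do not anticipate a genuine obstacle; the only thing requiring care is the exponent bookkeeping — checking that each hypothesis on $\delta$ places us in the regime where the relevant conjectured bound is a single monomial, and that the chained inequalities land exactly on, rather than above, the target bound. The one unavoidable imperfection is the $\lambda^\epsilon$ in (i) coming from Lemma~\ref{lemmacounting1}.
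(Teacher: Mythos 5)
Part (ii) is correct and is essentially the paper's argument: Young's inequality with the exponent identity $1 + \frac{1}{2p} = \frac{1}{p} + \frac{1}{(2p)'}$ to get $\|P_{\lambda,\delta}\|_{L^{(2p)'}\to L^{2p}} \lesssim \|\Phi_{\lambda,\delta}\|_{L^p}$, followed by $TT^*$ and the exponent check that $\delta > \lambda^{-1+4/p}$ lies below the red curve for $2p$. No issues there.

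Part (i) has a genuine gap: the $\lambda^\epsilon$ loss you introduce is avoidable, and the lemma as stated (a lossless implication between the two lossless conjectures) is not what you prove. The weak link is your bound $N = \#\mathcal{A}'_{\lambda,\delta} \lesssim \lambda^{1+\epsilon}\delta$ from the divisor bound. The paper does not reach for the divisor bound at all; instead it observes that
$$
\|\Phi_{\lambda,\delta}\|_{L^2} = \|\Phi_{\lambda,\delta}\|_{L^\infty}^{1/2} = \|P_{\lambda,\delta}\|_{L^2\to L^\infty},
$$
and then bounds this last quantity by $(\lambda\delta)^{1/2}$ using \emph{Conjecture~\ref{conjproj} at $p=\infty$}, which is itself a consequence of the hypothesised Conjecture~\ref{conjproj} at finite $p$ below the red curve: by Bernstein, $\|P_{\lambda,\delta}\|_{L^2\to L^\infty} \lesssim \lambda^{2/p}\|P_{\lambda,\delta}\|_{L^2\to L^p} \lesssim \lambda^{2/p}\cdot \lambda^{\frac12-\frac2p}\delta^{\frac12} = (\lambda\delta)^{1/2}$ (this is exactly the propagation of Lemma~\ref{hirondelle}(i)). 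In other words, under the hypothesis of (i), the lattice count $N$ is already controlled by $\lambda\delta$ without any $\epsilon$, so the chain $\|\Phi_{\lambda,\delta}\|_{L^p} \le \|P_{\lambda,\delta}\|_{L^2\to L^p}\|\Phi_{\lambda,\delta}\|_{L^2} \lesssim \lambda^{\frac12-\frac2p}\delta^{\frac12}\cdot(\lambda\delta)^{1/2} = \lambda^{1-\frac2p}\delta$ is lossless. Your remark that the $\lambda^\epsilon$ is ``unavoidable because $\|\Phi\|_{L^2}^2$ is literally the lattice count'' is precisely where the argument goes wrong: the hypothesis itself supplies a sharper bound on that lattice count than the divisor estimate. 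This matters downstream, since the authors use this lemma to transfer the $\epsilon$-free portions of Theorem~\ref{mainthmA} to Conjecture~\ref{conjkernel}.
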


\begin{proof} Let us assume first that Conjecture~\ref{conjproj} holds for $(\delta,\lambda,p)$ and $\delta > \lambda^{-1 + \frac{8}{p+2}}$. Note first that
$$
\|  \Phi_{\lambda,\delta} \|_{L^2} = \| \Phi_{\lambda,\delta} \|_{L^\infty}^{\frac 12} = \| P_{\lambda,\delta} \|_{L^2 \to L^\infty} = (\lambda \delta)^{\frac 12},
$$
where we used Conjecture~\ref{conjproj} for $(\delta,\lambda,\infty)$, which is a consequence of the conjecture for $(\delta,\lambda,p)$.
Then, estimating $\|  \Phi_{\lambda,\delta} \|_{L^2}$ by Parseval's equality,
$$
\| \Phi_{\lambda,\delta} \|_{L^p} = \| P_{\lambda,\delta} \Phi_{\lambda,\delta} \|_{L^p} \leq \| P_{\lambda,\delta} \|_{L^2 \to L^p} \|  \Phi_{\lambda,\delta} \|_{L^2} \lesssim [ \lambda^{\frac{1}{2} - \frac 2p} \delta^{\frac{1}{2}} ] [ \lambda^{\frac 12} \delta^{\frac 12} ] = \lambda^{1 - \frac{2}{p}} \delta,
$$
which proves Conjecture~\ref{conjkernel}.

Conversely, let us assume that Conjecture~\ref{conjkernel} holds for $(\delta,\lambda,p)$ and $\delta > \lambda^{-1 + \frac{4}{p}}$. Then, by Young's inequality,
$$
\| P_{\lambda,\delta} f \|_{L^{2p}} = \| \Phi_{\lambda,\delta} * f \|_{L^{2p}} \lesssim \| \Phi_{\lambda,\delta} \|_{L^p} \| f \|_{L^{(2p)'}} \lesssim  \lambda^{1- \frac 2p} \delta \| f \|_{L^{(2p)'}}.
$$
This means that
$$
\| P_{\lambda,\delta} \|_{L^{(2p)'} \to L^{2p}} \lesssim \lambda^{1- \frac 2p} \delta.
$$
By the classical $TT^*$ argument,
$$
\| P_{\lambda,\delta} \|_{L^{2} \to L^{2p}}= \| P_{\lambda,\delta} \|_{L^{{(2p)'}} \to L^{2p}}^{\frac 12} \lesssim \lambda^{\frac 12- \frac 1p} \delta^{\frac 12},
$$
which proves Conjecture~\ref{conjproj} for $(\delta,\lambda,2p)$.
\end{proof}

\subsection{Link to additive energies} \label{additivelink}
For $p$ an even number, the $\frac{p}{2}$-additive energy of the set $\Lambda \subset \mathbb{R}^2$ is defined as
$$
\mathbb{E}_{\frac p2}(\Lambda) = \# \{ (x_1,\dots,x_{p}) \in \Lambda^{p} \; \mbox{such that} \: x_1 + \dots + x_{\frac p2} = x_{\frac{p}{2}+1} + \dots + x_{p} \}.
$$
If $p$ is an even number,
$$
\mathbb{E}_{\frac p 2}(\mathcal{A}_{\lambda,\delta}') = \| \Phi_{\lambda,\delta} \|_{L^p(\mathbb{T}^2)}^p,
$$
so that the conjecture can be reformulated, for even $p$, as
$$
\mathbb{E}_{\frac p2}(\mathcal{A}'_{\lambda,\delta}) \sim \lambda^{p-2} \delta^p + (\lambda \delta)^{\frac p2}.
$$

How can this conjecture be interpreted in terms of additive energies? The second term on the above right-hand side comes from diagonal contributions, in other words from the universal inequality
$$
\mathbb{E}_{\frac p2}(\mathcal{A}'_{\lambda,\delta}) \geq | \mathcal{A}'_{\lambda,\delta}|^{\frac{p}{2}}
$$
For the first term on the right-hand side, note that there are $\sim(\lambda\delta)^{p/2}$ possible sums of $p/2$ elements of $\mathcal{A}'_{\lambda,\delta}$, and they all lie inside the ball of radius
$\sim \lambda$. Moreover, two distinct sums will have to be at least 1 apart from each other. As a result of this, many pairs of sums are forced to coincide, and a simple application of Cauchy-Schwarz proves the first lower bound.   

Additive energies of annular sets $\mathcal{A}'_{\lambda,\delta}$ in dimension 2 were considered in~\cite{KrishnapurKurlbergWigman}, and then by Bombieri-Bourgain~\cite{BombieriBourgain} who could prove that
$$
\mathbb{E}_3(\mathcal{A}_{\lambda,0}' ) \lesssim [\# \mathcal{A}_{\lambda,0}' ]^{\frac 72}.
$$
They conjectured that the exponent $\frac{7}{2}$ can be replaced by $3$ (which is equivalent to the case $p=6$ of~\eqref{macareux}). Bourgain-Demeter~\cite{BourgainDemeter3} proved the conjecture for $p=6$, $\delta = \lambda^{-\frac 13}$ with an $\epsilon$-loss. Finally, the question was also considered in dimensions four and five, see~\cite{BourgainDemeter2}.

\subsection{Small cap decoupling}

\label{sectionprogress}

In this section we write $A\les B$ if $A\lesssim_\epsilon P^\epsilon B$ holds for all $\epsilon>0$, where $P$ is the scale parameter, typically denoted by $R$ or $\lambda$.

The chief tool for proving  Conjecture B in the range $4\le p\le 6$ is small cap decoupling, a result first proved in \cite{DGW}, and further refined in \cite{FGM}.

Let $I$ be a compact interval and let $\Gamma:I\to\R$ be a smooth curve with
\begin{equation}
\label{dd14}
\min_{\xi\in I}|\Gamma''(\xi)|>0.
\end{equation}
Let $0<\beta\le 1$. For $R\ge 1$, partition its $1/R$-neighborhood $\mathcal{N}_{1/R}(\Gamma)$ into tubular regions $\Gamma$ with length $R^{-\beta}$ and width/height $\sim 1/R$. We will call such $\gamma$ an $(R^{-\beta},R^{-1})$-cap. There are $\sim R^{\beta}$ such caps.

We introduce the Fourier projection onto $L^2(\gamma)$
$$f_\gamma(x)=\int_{\gamma}\widehat{f}(\xi)e(x\cdot \xi)d\xi.$$
Given a ball $B_R=B(x_0,R)$, we define the weight
$$w_{B_R}(x)=(1+\frac{|x-x_0|}{R})^{-100}.$$

The following was proved in \cite{DGW} for the parabola. The extension to arbitrary curves $\Gamma$ as above is fairly standard, see the sketch of proof below.

\begin{thm}[$l^p(L^p)$ small cap decoupling, \cite{DGW}]
Assume $f:\R^2\to\C$ has Fourier transform supported in $\mathcal{N}_{1/R}(\Gamma)$. Then for $4\le p\le \min(2+\frac2{\beta},6)$ and each ball $B_R$ we have
\begin{equation}
\label{dd1}
\|f\|_{L^p(B_R)}\les R^{\beta(\frac12-\frac1p)}(\sum_{\gamma}\|f_\gamma\|_{L^p(w_{B_R})}^p)^{1/p}.
\end{equation}
\end{thm}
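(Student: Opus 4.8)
The plan is to reduce the $l^p(L^p)$ small cap decoupling estimate~\eqref{dd1} for an arbitrary curve $\Gamma$ with nonvanishing curvature to the known case of the parabola proved in~\cite{DGW}. The standard mechanism for this is a combination of a partition-of-unity argument, affine rescaling, and a parabolic approximation. First I would cover the compact interval $I$ by finitely many (a number depending only on $\Gamma$) subintervals $I_j$ on each of which $\Gamma$, after an affine change of variables in $\R^2$, is the graph of a function $\phi_j$ with $\phi_j(0)=\phi_j'(0)=0$ and $\phi_j''$ bounded above and below; since the number of pieces is $O(1)$, summing the estimates over $j$ costs only an $O(1)$ factor, harmless at the level of $\les$. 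So it suffices to prove~\eqref{dd1} when $\Gamma$ is such a graph over a short interval.

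On such a short interval, the next step is a further decomposition into $\sim R^{1/2}$ arcs of length $R^{-1/2}$ (the natural "canonical scale" for $\ell^2$-type curvature gain), on each of which $\Gamma$ is within $O(1/R)$ of its osculating parabola; this is exactly the regime where one may pass from $\Gamma$ to a parabola by an affine map that distorts the $1/R$-neighborhood by bounded factors. The caps of size $(R^{-\beta}, R^{-1})$ for $\Gamma$ then correspond, under these affine maps, to caps of comparable dimensions for the parabola (one should arrange $\beta \ge 1/2$, which is consistent with the hypothesis $p \le 2 + 2/\beta \le 6$, so that the small caps are no finer than the canonical scale and this two-step decomposition is coherent). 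Applying the parabola result~\cite{DGW} on each $R^{-1/2}$-arc and then combining with the standard $\ell^2(L^p)$ decoupling of Bourgain--Demeter into $R^{-1/2}$-arcs (valid for $p \le 6$) glues the pieces back together; one has to check the bookkeeping of exponents, i.e.\ that $R^{(1/2)(1/2 - 1/p)}$ from the coarse decoupling times $R^{\beta(1/2-1/p)}$ from the fine one recombines to the claimed $R^{\beta(1/2-1/p)}$ after accounting for the rescaled radius $R^{1/2}$ in the inner application --- this is the one genuinely computational point, but it is routine.

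Throughout, the weights $w_{B_R}$ need to be propagated correctly: affine maps and translations only change $w_{B_R}$ into a comparable weight (after adjusting the polynomial exponent $100$ down by an absolute amount at each step, of which there are $O(1)$), and the local constancy of $f_\gamma$ at scale $R^\beta$ (respectively $R^{1/2}$) lets one replace sharp cutoffs to $B_R$ by the smooth weights at each stage. I expect the main obstacle --- really the only place requiring care rather than routine adaptation --- to be verifying that the parabolic rescaling genuinely maps $(R^{-\beta},R^{-1})$-caps of $\Gamma$ to caps of the parabola with comparable dimensions and with the ball $B_R$ mapped to (a cover by) balls of comparable radius, uniformly over the $O(1)$ pieces; once that geometric bookkeeping is set up, the estimate~\eqref{dd1} follows by concatenating the parabola case of~\cite{DGW} with classical $\ell^2$ decoupling, exactly as in the "fairly standard" extensions alluded to in the statement.
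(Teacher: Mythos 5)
Your proposed reduction is genuinely different from the paper's, and unfortunately it contains a real gap at the step you yourself flag as "the one genuinely computational point."

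The problem is that a two-scale decomposition of the form ``$\ell^2$ decoupling into $R^{-1/2}$-arcs, then small-cap decoupling inside each arc'' does \emph{not} recombine to the constant $R^{\beta(\frac12-\frac1p)}$. If $\tau$ is a $R^{-1/2}$-arc, the parabolic rescaling $\xi\mapsto(\xi-c)/\Delta$, $\eta\mapsto\eta/\Delta^2$ with $\Delta=R^{-1/2}$ sends the $1/R$-thick neighbourhood to a neighbourhood of thickness $\Delta^{-2}R^{-1}=1$: the rescaled problem lives at scale $R'=R\Delta^2=1$, and there is nothing left to decouple. This is precisely the degenerate case, so "accounting for the rescaled radius $R^{1/2}$ in the inner application" does not happen; there is no inner application at scale $R^{1/2}$. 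If instead you keep the original scale and decouple inside $\tau$ directly, the geometry inside a single $(R^{-1/2},R^{-1})$-rectangle is effectively flat, and the sharp $\ell^p(L^p)$ constant for decoupling into the $M=R^{\beta-\frac12}$ small caps inside $\tau$ is $M^{1-\frac2p}$ (take $\widehat{f_\gamma}=\mathbf{1}_\gamma$ to see this lower bound), not $M^{\frac12-\frac1p}$. Combined with the Hölder cost $R^{\frac12(\frac12-\frac1p)}$ of passing from $\ell^2$ to $\ell^p$ over the $R^{1/2}$ arcs, the total is $R^{\frac12(\frac12-\frac1p)}M^{1-\frac2p}$, which exceeds the target $R^{\beta(\frac12-\frac1p)}$ by the factor $R^{(\beta-\frac12)(\frac12-\frac1p)}$; for $\beta>1/2$ and $p>2$ this is a genuine power loss. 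This is exactly why small-cap decoupling is a nontrivial theorem: the naive two-step concatenation through the canonical scale is lossy, and the whole content of \cite{DGW,FGM} is to beat it.

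The paper's proof avoids this by not treating the parabola result as a black box. Instead it re-runs the DGW argument for the class of curves with $|\Gamma''|\sim 1$: the bilinear main step relies on Cordoba's square-function inequality and a refined Kakeya inequality, both of which use only nonvanishing curvature and so persist verbatim for general $\Gamma$; the multi-scale (Whitney) induction then closes not by mapping a sub-arc onto the exact parabola (which, as the paper explicitly notes, fails for general curves) but by the weaker rescaling lemma that any $\Delta$-arc of $\Gamma$ is mapped by an affine map to \emph{some} curve $\Gamma'$ with the same uniform bounds, after which the bilinear inequality applies with constants uniform over the class. That uniformity over the class is the substitute for exact self-similarity of the parabola, and it is what your reduction via osculating parabolas lacks: the osculating parabola only approximates $\Gamma$ to within $O(L^3)$ on an $L$-arc, which is $\lesssim 1/R$ only for $L\lesssim R^{-1/3}$, and iterating that approximation across scales is essentially re-proving the multi-scale induction rather than quoting the parabola theorem once.
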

When $\beta>1/2$ this is a good substitute for $l^2(L^p)$ decoupling
$$\|f\|_{L^p(B_R)}\les (\sum_{\gamma}\|f_\gamma\|_{L^p(w_{B_R})}^2)^{1/2},$$
which is only true if $\beta\le 1/2$. The reason for the failure of this inequality when $\beta>1/2$ is the fact that there are many (more precisely $R^{\beta-\frac12}$) consecutive caps $\beta$ inside
an essentially rectangular/flat cap $\tau$ with dimensions $(R^{-1/2},R^{-1})$.	

Parts of our forthcoming argument need a slightly stronger (via H\"older's inequality) version of \eqref{dd1}. This is a particular case of  Corollary 5 in \cite{FGM}.
\begin{thm}[$l^q(L^p)$ small cap decoupling, \cite{FGM}]
	Assume $f:\R^2\to\C$ has Fourier transform supported in $\mathcal{N}_{1/R}(\Gamma)$. Then for $4\le p\le \min(2+\frac2{\beta},6)$, $\frac1q=1-\frac{3}{p}$ and  each ball $B_R$ we have
	\begin{equation}
	\label{dd2}
	\|f\|_{L^p(B_R)}\les R^{\beta(\frac12-\frac1q)}(\sum_{\gamma}\|f_\gamma\|_{L^p(w_{B_R})}^q)^{1/q}.
	\end{equation}
\end{thm}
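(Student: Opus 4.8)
\emph{Strategy.} The plan is to reduce to the model case of the parabola and then to run the Bourgain--Demeter induction on scales that proves~\eqref{dd1}, but propagating the $l^q$ norm of the cap pieces instead of their $l^p$ norm; this is exactly what is carried out in Corollary~5 of~\cite{FGM}, so in the end one may simply quote it. Below I describe the two ingredients and indicate where the real work sits.

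\emph{Reduction to the parabola.} Since $\min_{\xi\in I}|\Gamma''(\xi)|>0$, after rotating the frequency variables and cutting $I$ into $O(1)$ pieces we may assume that on each piece $\Gamma$ is a graph $\{(\xi,\phi(\xi)):\xi\in I_0\}$ with $\phi''\sim 1$ and bounded higher derivatives. A shear in the second frequency coordinate composed with an anisotropic dilation turns $\phi$ into a small perturbation of $\xi\mapsto\xi^2$; such affine maps send $(R^{-\beta},R^{-1})$-caps to $(R^{-\beta},R^{-1})$-caps up to constants and preserve, up to constants, every norm in~\eqref{dd2}. Finitely many parabolic rescalings then promote a perturbation to the exact parabola --- the standard reduction already used for~\eqref{dd1} --- and summing the $O(1)$ pieces of $I$ by the triangle inequality, while absorbing the Schwartz tails carried by $w_{B_R}$ in the customary way, costs only constants. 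In particular nothing beyond $\Gamma''\ne 0$ is used, exactly as for the exponential-sum estimates elsewhere in the paper.

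\emph{The parabola, and the main obstacle.} For the parabola, \eqref{dd2} is the case $\frac1q=1-\frac3p$ of Corollary~5 of~\cite{FGM} --- the $l^q$-refinement of~\eqref{dd1} --- and I would prove it along the lines of~\cite{DGW}. The structure, in the substantial regime $\beta>\frac12$, is this: $l^2(L^p)$ parabolic-rescaling decoupling, run from scale $1$ down to the critical scale $R^{-1/2}$ at which the parabola leaves its tangent line by one cap-thickness, decouples $f$ into the canonical $(R^{-1/2},R^{-1})$-caps $\tau$ --- which is where the constraint $p\le 6$ enters; inside each $\tau$ the curve is essentially a segment, so the $\sim R^{\beta-1/2}$ small caps $\gamma\subset\tau$ are separated only by flat (trivial) decoupling. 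The extremizers --- a single cap $\gamma$, and the flat bush in which all the $f_\gamma$ coincide --- force the constant to be $\gtrsim 1$ and $\gtrsim R^{\beta(\frac12-\frac1q)}$ respectively in the whole range $4\le p\le\min(2+\frac{2}{\beta},6)$, while for $p>2+\frac{2}{\beta}$ the bush demands a strictly larger constant, which is the second threshold; the value $\frac1q=1-\frac3p$ is precisely the one for which the gain from the flat scales below $R^{-1/2}$ matches the gain from the curved scales above it. The main obstacle --- and the reason why one really needs~\cite{DGW,FGM} rather than a short argument --- is that these two mechanisms have to be interleaved scale by scale. If instead one runs two separate passes, namely Bourgain--Demeter $l^2(L^p)$ decoupling into the $\tau$ followed by flat $l^q(L^p)$ decoupling of each $\tau$ into its $\gamma$'s, the pieces get reassembled through an $l^2$ sum over the caps $\tau$; since $q\ge 2$ in our range, the Hölder step turning that $l^2$ sum into the $l^q$ sum of~\eqref{dd2} loses a factor $R^{(\beta-\frac12)(\frac12-\frac1p)}$, which is exactly the gap between the two-pass bound and the sharp estimate~\eqref{dd2}. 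Avoiding this loss --- by processing the flat sub-$R^{-1/2}$ scales inside the Bourgain--Demeter recursion, so that the $l^q$ cap-norm is never passed through the lossy inclusion $l^q\hookrightarrow l^2$ --- is the bookkeeping one imports from~\cite{DGW,FGM}.
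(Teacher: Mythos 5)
Your first step---reducing a general curve $\Gamma$ with nonvanishing curvature to the exact parabola---does not go through as written. A rotation, shear and anisotropic dilation do bring the local graph $\phi$ to the form $\xi^2+O(\xi^3)$, but you then assert that ``finitely many parabolic rescalings then promote a perturbation to the exact parabola.'' Parabolic rescaling is an affine map, and no composition of affine maps can carry a curve that is not a parabola onto the parabola; rescaling shrinks the cubic and higher Taylor coefficients on a short arc, but it never annihilates them. The paper's sketch flags precisely this point (``Strictly speaking, this strong form of parabolic rescaling fails for arbitrary curves''), and the remedy is not a reduction to the model curve but an induction over a uniform class: one observes that the two ingredients of the bilinear small-cap estimate---C\'ordoba's square-function inequality and the refined Kakeya input---only use nonvanishing curvature and therefore hold with constants that are uniform over all $C^2$ graphs with $|\Gamma|,|\Gamma'|\lesssim 1$ and $|\Gamma''|\sim 1$, and that the affine map attached to an arc $J$ of length $\Delta$ sends $\Gamma|_J$ to another graph $\widetilde\Gamma$ belonging to the same class, not to the parabola. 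Because the bilinear estimate is uniform over that class, the bilinear-to-linear (Whitney) iteration closes exactly as it does for the parabola, without ever identifying the rescaled curve with $\xi\mapsto\xi^2$. Your write-up needs this uniform-class formulation in place of the literal reduction; as stated, the reduction step would be a genuinely new and, unfortunately, false claim.

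Apart from this, your account of the parabola case---the $l^q$ refinement at $\tfrac1q=1-\tfrac3p$, the role of the critical exponent $2+\tfrac{2}{\beta}$, why a two-pass (curved then flat) argument loses a factor $R^{(\beta-\frac12)(\frac12-\frac1p)}$, and the need to interleave the two families of scales---is a fair synopsis of Corollary~5 of [FGM] and is consistent with the paper, whose sketch likewise quotes that result for the parabola and devotes its effort entirely to the extension to general $\Gamma$. So the piece to repair is the extension, not the parabola input.
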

The  upper bound $p\le \min(2+\frac{2}{\beta},6)$ is sharp in both theorems. The value $2+\frac{2}{\beta}$ is called the critical exponent for small cap decoupling.

\begin{proof}[Sketch of proof:]
Let us comment on the extension of these theorems to general $C^2$ curves $\Gamma:[-1/2,1/2]\to\R$ satisfying $|\Gamma(x)|, |\Gamma'(x)|\lesssim 1$ and  $|\Gamma''(x)|\sim 1$ for $x\in[-1/2,1/2]$. 

The main step in the proof for  the parabola was proving a bilinear version of the small cap decoupling inequality. More precisely, the function $f$ is replaced with the geometric average $|f_1f_2|^{1/2}$ where the spectra of $f_1,f_2$ lie in $\sim 1$ separated parts of $\mathcal{N}_{1/R}$. The only relevance of this separation is that normals at points lying in the two pieces point in separated directions.

Two special tools were used to prove this bilinear inequality. One of them is Cordoba's inequality, whose validity and proof remain the same for  curves $\Gamma$ as above. The other one is a refined Kakeya inequality, which takes the same form for all $\Gamma$ with nonzero curvature. Indeed, curvature forces the spatial rectangles localizing wave packets to point in distinct directions. 

The remaining step in the proof for the parabola was a Whitney-type decomposition for this curve into smaller pieces, and the application of the previously mentioned bilinear small cap decoupling to each piece, via parabolic rescaling. The latter amounts to mapping a small arc on the parabola to the full scale-one parabola via an affine transformation (it is crucial that  affine transformations  commute with the Fourier transform). Strictly speaking, this strong form of parabolic rescaling fails for arbitrary curves. However, the following totally satisfactory analogue is true:  given any interval $J\subset [-1/2,1/2]$ of length $\Delta$ and centered at $c$, the affine map
$$(\xi,\eta)\to(\frac{\xi-c}{\Delta},\frac{\eta-\Gamma(c)-\Gamma'(c)\xi}{\Delta^2})$$
maps the arc $\Gamma:J\to\R$ to some $\Gamma':[-1/2,1/2]\to\R$ that has the same properties as $\Gamma$. And since the bilinear decoupling holds true with uniform bounds for such curves, the argument closes in the same way as in the case of the parabola. 
\end{proof}

In our applications of \eqref{dd1} and \eqref{dd2}, $\widehat{f}$ will be supported  only on a small number $N_{active}$ of the total number $N_{total}\sim R^{\beta}$ of caps $\gamma$. Then \eqref{dd1} gives
\begin{equation}
\label{dd3}
\|f\|_{L^p(B_R)}^p\les N_{total}^{\frac{p}2-1}N_{active}\max_{\gamma}\|f_\gamma\|_{L^p(w_{B_R})}^p,
\end{equation}
while \eqref{dd2} leads to the more favorable
\begin{equation}
\label{dd4}
\|f\|_{L^p(B_R)}^p\les N_{total}^{3-\frac{p}2}N_{active}^{p-3}\max_{\gamma}\|f_\gamma\|_{L^p(w_{B_R})}^p.
\end{equation}
When $\beta\le 1/2$, we have an even stronger estimate, as a consequence of $l^2(L^p)$ decoupling
\begin{equation}
\label{dd11}
\|f\|_{L^p(B_R)}^p\les N_{active}^{p/2}\max_{\gamma}\|f_\gamma\|_{L^p(w_{B_R})}^p.
\end{equation}
We will work with $\Gamma:[-1/2,1/2]\to\R$ given by $\Gamma(\xi)=\sqrt{1-\xi^2}$. In fact, for reasons of symmetry, we may as well work with the full circle $\mathbb{S}^1$. We rescale \eqref{dd3} and \eqref{dd4} to allow $\widehat{f}$ to be supported on $\mathcal{A}_{\lambda,\delta}$, for some $\delta\le 1$. If $\gamma$ are now $(\lambda(\frac{\delta}{\lambda})^{\beta},\delta)$-caps partitioning $\mathcal{N}_\delta(\lambda S^1) = \mathcal{A}_{\lambda,\delta}$, we find that if $\widehat{f}$ is supported on $ \mathcal{A}_{\lambda,\delta}$ then \eqref{dd3}, \eqref{dd4} and \eqref{dd11} hold with $R\sim 1/\delta$.
\medskip

\subsection{Some progress on the kernel conjecture}

The main result we prove is as follows. It is important to note that the difficult range $4<p<6$ does not follow by interpolating the easier endpoint cases $p=4,6$.

\begin{thm}[Square root cancellation at the critical exponent]
\label{dd12}	
Assume $p\in[4,6]$ and $\delta=\lambda^{\frac{4}{p}-1}$. Then
$$\|\Phi_{\lambda,\delta}\|_{L^p([0,1]^2)}\les (\lambda\delta)^{1/2}.$$	
\end{thm}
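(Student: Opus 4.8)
The plan is to use the small cap decoupling machinery set up in the previous subsection, applied at the critical exponent $p = 2 + \frac{2}{\beta}$ with a judiciously chosen $\beta$, and to combine it with the lattice point counting lemmas from Section~\ref{sectioncounting}. First I would translate the statement: since $\delta = \lambda^{4/p - 1}$, we have $\lambda\delta = \lambda^{4/p}$, and we want $\|\Phi_{\lambda,\delta}\|_{L^p([0,1]^2)} \les (\lambda\delta)^{1/2} = \lambda^{2/p}$. Taking the periodization into account, it suffices to bound $\|\Phi_{\lambda,\delta}\|_{L^p(B_R)}$ for $R \sim 1/\delta$, where $\Phi_{\lambda,\delta}$ (or rather an extension of it with the same Fourier support $\mathcal{A}_{\lambda,\delta}'$) plays the role of $f$. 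The key point is to choose $\beta$ so that $p$ is the critical exponent: $p = 2 + \frac{2}{\beta}$ means $\beta = \frac{2}{p-2}$, which lies in $(1/2, \infty)$ when $p < 6$ and equals $1/2$ exactly when $p = 6$. For $p \in [4,6]$ this gives $\beta \in [1/2, 1]$, so $\beta \le 1$ as required.

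Next I would set up the caps: rescaling as in the excerpt, the relevant caps $\gamma$ are $(\lambda(\delta/\lambda)^\beta, \delta)$-caps partitioning $\mathcal{A}_{\lambda,\delta}$, with $N_{total} \sim R^\beta \sim \delta^{-\beta}$ of them. Since $\widehat{\Phi_{\lambda,\delta}}$ is just $\mathbf{1}_{\mathcal{A}_{\lambda,\delta}'}$, every cap is "active" that contains a lattice point, so $N_{active}$ is the number of caps containing at least one point of $\mathbb{Z}^2$, which is at most $\#\mathcal{A}_{\lambda,\delta}' \lesssim \lambda^{1+\epsilon}\delta$ by Lemma~\ref{lemmacounting1}, but more usefully is simply at most $N_{total}$. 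For each individual cap, $f_\gamma$ is a sum of $\#\gamma'$ characters over lattice points in $\gamma$; I would bound $\|f_\gamma\|_{L^p(w_{B_R})}$ by interpolating between the trivial $L^\infty$ bound $\#\gamma'$ and the $L^2$ bound $(\#\gamma')^{1/2}|B_R|^{1/2}$ (on the ball), getting $\|f_\gamma\|_{L^p(w_{B_R})} \lesssim (\#\gamma')^{1 - 1/p}|B_R|^{1/p} \cdot (\text{something})$; one must be careful to track the $|B_R| \sim R^2 \sim \delta^{-2}$ factors since the decoupling is stated on a ball, not the torus. Then plug into \eqref{dd3} (or \eqref{dd4}, whichever is sharper in the regime) and compare with the target. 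The maximal cap has $\#\gamma' \lesssim$ (length of cap) $= \lambda(\delta/\lambda)^\beta$, and plugging $N_{total}\sim\delta^{-\beta}$, $\beta = \frac{2}{p-2}$ should, after arithmetic, land exactly on $(\lambda\delta)^{1/2}$ up to $\lambda^\epsilon$.

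The main obstacle I anticipate is that the crude bound $\max_\gamma \#\gamma' \lesssim \lambda(\delta/\lambda)^\beta$ is too lossy: a single cap of that length would only contain that many lattice points if they were essentially equally spaced along a line, and using the worst case for every cap simultaneously overcounts. So the real work — exactly as flagged in the paper's introduction (``careful estimates on the distribution of lattice points in $\mathcal{A}_{\lambda,\delta}$'') — is a refined counting: one must dyadically decompose the caps according to $\#\gamma' \sim 2^t$, bound the number of caps in each class (analogously to Lemma~\ref{lemmacounting2}, exploiting that caps with many lattice points are rare because those points must be collinear with controlled spacing, limiting the available directions), and sum the small cap decoupling estimate \eqref{dd3}/\eqref{dd4} over $t$. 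The fact that the endpoints $p = 4, 6$ are easy but the open range is not strongly suggests the summation over $t$ is delicate and that for intermediate $p$ one genuinely needs the $l^q(L^p)$ refinement \eqref{dd2}/\eqref{dd4} rather than \eqref{dd1}/\eqref{dd3}, with the gain from $N_{active}^{p-3}$ (note $p - 3 < p/2$ in our range) being precisely what makes the count close. I would therefore organize the proof as: (1) reduce to the ball and fix $\beta = 2/(p-2)$; (2) state and prove the refined per-class cap count; (3) estimate $\|f_\gamma\|_{L^p}$ per cap; (4) apply \eqref{dd4}, sum the geometric-type series over the dyadic classes $2^t$, and verify the exponents collapse to $(\lambda\delta)^{1/2}$.
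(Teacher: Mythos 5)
Your overall framework is right — decouple $\Phi_{\lambda,\delta}$ into caps $\eta$ of dimensions roughly $\delta^{-1}\times\delta$ at the critical exponent $\beta = \frac{2}{p-2}$, dyadically sort by $\#\eta'\sim 2^s$, bound the per-class cap count, bound $\|f_\eta\|_{L^p(w_{B_R})}^p\sim 2^{s(p-1)}R^2$, and plug into \eqref{dd3}/\eqref{dd4}. You also correctly single out the $\ell^q(L^p)$ refinement \eqref{dd4} as essential for intermediate $p$. However, there is a genuine gap in step (2), the ``refined per-class cap count,'' which you propose to do ``analogously to Lemma~\ref{lemmacounting2}.'' A direct analog of that lemma for $(\delta^{-1},\delta)$-caps — lattice directions $\lesssim 2^{2m}$, caps per direction via the angular spread $\delta/2^{m+s}$ divided by the angular width $(\lambda\delta)^{-1}$ of a cap — gives $N_{s,m}\lesssim \lambda\delta^2 2^{m-s}$ and, after summing over $2^m\lesssim\delta^{-1}2^{-s}$, only $N_s\lesssim\lambda\delta\, 2^{-2s}$. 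If you then apply \eqref{dd4} at fixed scale $\delta^{-1}$ with $N_{total}\sim\lambda\delta$, you obtain a per-class contribution $(\lambda\delta)^{3-p/2}N_s^{p-3}2^{s(p-1)}\lesssim(\lambda\delta)^{p/2}2^{s(5-p)}$, which for $p<5$ grows in $s$ and, summed up to $2^s\sim\delta^{-1}$, overshoots $(\lambda\delta)^{p/2}$ by a polynomial factor. So the count $\lambda\delta\,2^{-2s}$ is not strong enough for $4\le p<5$.

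What is actually needed is a sharper count that the paper obtains by an inherently two-scale geometric argument: cover $\mathcal{A}_{\lambda,\delta}$ by the longer $((\lambda\delta)^{1/2},\delta)$-caps $\tau$, and within each $\tau$ show, via the fact that three lattice points span a triangle of area at least $1/2$ (the inequality $d\,2^m\gtrsim 1$), that consecutive caps $\eta$ carrying distinct lattice lines are separated by $\gtrsim 2^s\delta^{-1}$ (or $2^{s+t}\delta^{-1}$ in the shallow-angle regime). The angular/direction count is then performed at the $\tau$ scale (exactly Lemma~\ref{lemmacounting2}), and the in-$\tau$ separation supplies an extra factor of $2^{-s}$ you are currently missing. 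Relatedly, rather than decoupling at the fixed scale $\delta^{-1}$ with the $N_{active}^{p-3}$ gain from \eqref{dd4}, the paper exploits the separation to \emph{re-group} the active $\eta$'s into coarser caps $\theta$ of length $2^s\delta^{-1}$, $2^{s+t}\delta^{-1}$, or $(\lambda\delta)^{1/2}$ depending on the angle $\alpha$ between the lattice line and the cap's major axis; the three regimes for $\alpha$ (compared against $\operatorname{ecc}(\eta)\sim\delta^2$ and $\operatorname{ecc}(\tau)\sim(\delta/\lambda)^{1/2}$) give Cases 2, 3, and 4, each needing a different decoupling inequality: \eqref{dd3} suffices for Case 2, \eqref{dd4} is required for Case 3, and $\ell^2(L^p)$ decoupling \eqref{dd11} handles Case 4. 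So the missing idea is not primarily the choice between \eqref{dd3} and \eqref{dd4}, but the triangle-area separation argument at two nested scales, together with the case split on $\alpha$ that determines both the re-decoupling scale and which decoupling theorem to invoke.
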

\begin{proof}
Note first that
\begin{equation}
\label{dd7}
\delta\ge \lambda^{-1/3},
\end{equation}
so that in particular $\delta^{-1} < \sqrt{\lambda \delta}$. We cover $\mathcal{A}_{\lambda,\delta}$ with $(\delta^{-1}/100,\delta)$-caps $\eta$. 
This choice is important for two reasons. On the one hand, it has area smaller than $1/2$, and this forces structure on the lattice points inside $\eta$. On the other hand, the length scale $\sim \delta^{-1}$ of $\eta$ is the smallest for which we get $L^p$ square root cancellation via small cap decoupling.
We illustrate this in Case 1 of the following four-case argument.

Decompose
$$\Phi_{\lambda,\delta}=\sum_{\eta}\Phi_{\eta},$$
with $\Phi_\eta(x)=\sum_{k\in \eta\cap \mathbb{Z}^2}e^{2\pi i k\cdot x}$.
\\
\\
Case 1. We apply (the rescaled version of) \eqref{dd3} to $f=\sum_{\eta}\Phi_\eta$ (so $\gamma=\eta$ and $f_\eta=\Phi_\eta$) and $R\sim 1/\delta$, with the sum restricted to those $\eta$ containing exactly one lattice point.

Let us check that $\eta$ has the desired length $\sim \lambda(\frac\delta\lambda)^{\beta}$, for some $\beta$ satisfying $p\le 2+\frac{2}{\beta}$. Solving $\frac1\delta=\lambda(\frac\delta\lambda)^{\beta}$ and using that $\delta=\lambda^{\frac{4}{p}-1}$, leads to $\beta=\frac{2}{p-2}$. Thus, we apply \eqref{dd3} at the critical exponent.

Note that $N_{total}\sim \lambda\delta$. We allow for the possibility that $N_{active}$ may be comparable to $N_{total}$, see the comment a few lines below. Note also that $\|f_\eta\|_{L^p(w_{B_R})}\sim R^{2/p}$. Using first the 1-periodicity of $f$, then \eqref{dd3}, we conclude with the desired bound
$$
\|\sum_{\eta}\Phi_{\eta}\|_{L^p([0,1]^2)}^p\sim R^{-2}\|\sum_{\eta}f_{\eta}\|_{L^p([0,R]^2)}^p\les (\lambda\delta)^{p/2}.
$$

The argument just presented  works when considering caps $\eta$ with $2^s\les 1$ points.
The counting arguments that we will use next show that most lattice points in $\mathcal{A}_{\lambda,\delta}$ are absorbed by such caps. In particular, for at least one of these small scales $s$, we have that  $N_{active}\ges \lambda\delta$.
This shows the sharpness of the argument, and motivates the use of small cap decoupling.
\medskip

In the remaining cases we restrict attention to those $\eta$ containing at least two lattice points. All lattice points inside $\eta$ need to sit on a line we call $l_\eta$. This is because the area of the triangle determined by any three lattice points is half an integer, while the area of (the convex hull of) $\eta$ is smaller than $1/2$. Moreover,
all lattice points on $l_\eta\cap \eta$ must be equidistant, with separation of consecutive points of order $\sim 2^m$, for some $m\ge 0$. Pigeonholing at the expense of a $(\log R)^2$ loss, we may focus on those $\eta$ corresponding to a fixed $m$, and also  containing $\sim 2^s$ lattice points, for some fixed $s\ge 0$. It follows that
\begin{equation}
\label{dd6}
\operatorname{length}(l_\eta\cap \eta)\sim 2^{s+m}\lesssim \delta^{-1}.
\end{equation}
Finally, call $\alpha$ the angle between $l_\eta$ and the long axis of $\eta$.
\\
\\
Case 2. We restrict attention to those $\eta$ with $\alpha\gg \operatorname{ecc}(\eta)\sim \delta^2$. We call them {\em active}. It is worth pointing out that $\eta$ is essentially a rectangle. In fact, each $\eta$ sits inside a $((\lambda\delta)^{1/2},\delta)$-cap (as part of $\mathcal{A}_{\lambda,\delta}$) that is also essentially a rectangle. We will call such caps by the letter $\tau$. The fact that $\tau$ is longer than $\eta$, $(\lambda\delta)^{1/2}\ge \delta^{-1}$, is a consequence of \eqref{dd7}.

\smallskip

Since $\alpha$ is much bigger than the eccentricity of $\eta$, the lines $l_\eta$ need to be different for all active $\eta$ inside a fixed $\tau$. This will force some separation between any two consecutive active $\eta_1$ and $\eta_2$, as follows. Pick two lattice points $P_1,P_2\in l_{\eta_1}\cap \eta_1$ with $\operatorname{dist}(P_1,P_2)\sim 2^m$.
Pick any point $P_3\in (l_{\eta_2}\cap \eta_2)\setminus l_{\eta_1}$. Let $d=\operatorname{dist}(P_3,l_{\eta_1})$. Since the area of $\triangle P_1P_2P_3$ is at least $1/2$, we find that
\begin{equation}
\label{dd8}
d2^m\gtrsim 1.
\end{equation}
Since $\alpha\gg \operatorname{ecc}(\eta)\sim \delta^2$, we have that
\begin{equation}
\label{dd9}
\alpha\sim\sin\alpha\sim \frac{\delta}{2^{s+m}}.
\end{equation}
On the other hand,
\begin{equation}
\label{dd10}
\alpha\sim\sin\alpha\sim \frac{d}{|P_1P_3|}.
\end{equation}
Combining these two and using that $|P_1P_3|\sim \operatorname{dist}(\eta_1,\eta_2)$ shows that
$$\operatorname{dist}(\eta_1,\eta_2)\sim d\delta^{-1}2^{s+m}.$$
When combined with \eqref{dd8}, this leads to $\operatorname{dist}(\eta_1,\eta_2)\gtrsim 2^s\delta^{-1}$.
\smallskip

This suggests partitioning each $\tau$ into $(2^s\delta^{-1},\delta)$-caps $\theta$. Each $\eta$ will sit inside some $\theta$, and each $\theta$ contains at most one active $\eta$. We call $\theta$ active if it contains some active $\eta$. Let now
$$f=\sum_{\eta:\;active}\Phi_\eta,$$
$$f_\theta=\sum_{\eta\subset\theta:\;active}\Phi_\eta.$$
We apply (the rescaled version of) \eqref{dd3} to $f$, with the caps $\gamma$ being the active $\theta's$. We need to check that the length $2^s\delta^{-1}$ of $\theta$ may be written as $\lambda(\frac{\delta}{\lambda})^{\beta}$, for some $\beta$ satisfying $p\le \min(2+\frac{2}{\beta},6)$. However, this is immediate, since we have observed earlier that $\delta^{-1}=\lambda(\frac{\delta}{\lambda})^{\frac{2}{p-2}}$. The small cap in this case is getting longer than in the previous case.
\smallskip

Periodicity and \eqref{dd3} with $R=1/\delta$ gives
$$\|\sum_{\eta:\;active}\Phi_{\eta}\|_{L^p([0,1]^2)}^p\sim R^{-2}\|\sum_{\theta:\;active}f_{\theta}\|_{L^p([0,R]^2)}^p\les R^{-2} N_{total}^{\frac{p}2-1}N_{active}\max_{\theta:active}\|f_\theta\|_{L^p(w_{B_R})}^p.$$
Note that there are $N_{total} \sim \lambda/(2^s\delta^{-1})$ caps $\theta$.
\smallskip

Here is how we evaluate the number  $N_{active}$ of active $\theta$. A $\tau$ containing at least one active $\theta$ will itself be called active. Each active $\tau$ contains $\lesssim \delta(\lambda\delta)^{1/2}/2^s$ active $\theta$, and by~\eqref{boundSms}, the number of active $\tau$ is $\lesssim (\lambda\delta)^{1/2}2^{m-s}$. We conclude that
$$N_{active}\lesssim \frac{\delta(\lambda\delta)^{1/2}}{2^s}\frac{(\lambda\delta)^{1/2}2^m}{2^s}=\lambda\delta^22^m2^{-2s}.$$
Since each $\theta$ contains $\sim 2^s$ points, we have the trivial  sharp estimate
$$\|f_\theta\|_{L^p(w_{B_R})}^p\lesssim \|f_\theta\|_{L^2(w_{B_R})}^2 2^{s(p-2)}\sim 2^{s(p-1)}R^2.$$
Putting things together, we conclude with the desired bound
$$\|\sum_{\eta:\;active}\Phi_{\eta}\|_{L^p([0,1]^2)}^p\les (\frac{\lambda\delta}{2^s})^{\frac{p}{2}-1}\lambda\delta^22^m2^{-2s}2^{s(p-1)}=(\lambda\delta)^{\frac{p}{2}}(2^{m+s}\delta)2^{s(\frac{p}{2}-3)}\lesssim (\lambda\delta)^{\frac{p}{2}},$$
where the last inequality follows from \eqref{dd6} and the fact that $p\le 6$.
\\
\\
Case 3. We now restrict attention to those $\eta$ with $(\delta/\lambda)^{1/2}\sim \operatorname{ecc}(\tau)\lesssim \alpha\lesssim \operatorname{ecc}(\eta)\sim \delta^2$. In particular, we assume $\alpha\sim 2^{-t}\delta^2$, for some fixed $t\ge 0$. The line $l_\eta$ is now the same for $\lesssim 2^t$ consecutive active $\eta$. We cover each group of such $\eta$ with a $(2^t\delta^{-1},\delta)$-cap $\sigma$, and call the common line $l_\sigma$. We call $\sigma$ active. Since $l_\sigma$ crosses at least one $\eta$, we have that $2^{m+s}\sim \delta^{-1}$.

\begin{figure}
\begin{center}
\begin{tikzpicture}[scale=2.5]

\pgfmathsetmacro{\originx}{3}
\pgfmathsetmacro{\originy}{0}
\pgfmathsetmacro{\angle}{15}

\pgfmathsetmacro{\height}{0.6}
\pgfmathsetmacro{\width}{6}

\pgfmathsetmacro{\swidth}{1.5}
\pgfmathsetmacro{\hswidth}{1}

\coordinate (origin) at (-\originx, -\originy);
\coordinate (s2origin) at (-\originx + 2*\swidth, -\originy);

\draw[blue, very thick] (origin) --++ (0, 0+\height);
\draw[name path= base][orange, ultra thick] (origin) -- ++(0 + \width, 0);
\draw[blue, very thick] (-\originx, -\originy+\height) -- ++(0+ \width, 0);
\draw[orange, ultra thick] (-\originx + \width, - \originy) -- ++ (0 , 0+\height);

\coordinate (m5) at ($(origin)+(0,-0.65)$);
\coordinate (m6) at ($(origin)+(\width, 0)+(0,-0.65)$);
\draw [decorate, decoration = {calligraphic brace, mirror}, very thick] (m5) --  (m6);
\node[below] at ($(m5)! 0.5 !(m6) +(0,-0.05)$)  {$\tau$};


\draw[red] (origin) -- ++(0, 0+\height);
\draw[red] (origin) -- ++(0 + \swidth, 0);
\draw[red] (-\originx, -\originy+\height) -- ++(0+ \swidth, 0);
\draw[red] (-\originx + \swidth, - \originy) -- ++(0, \height);

\coordinate (m1) at ($(origin)+(0,-0.3)$);
\coordinate (m2) at ($(origin)+(\swidth, -0.3)$);
\draw[gray, <->] (m1)--(m2);
\node[below] at ($(m1)! 0.5 !(m2) $)  {$2^t\delta^{-1}$};

\coordinate (m3) at ($(origin)+(0,\height)+(0,0.5)$);
\coordinate (m4) at ($(origin)+ (2*\swidth,0) +(0,\height)+(0,0.5)$);
\draw [decorate, decoration = {calligraphic brace}, very thick] (m3) --  (m4);
\node[above] at ($(m3)! 0.5 !(m4) +(0,0.05)$)  {$\theta$};

\coordinate (m7) at ($(origin)+(0,\height)+(0,0.2)$);
\coordinate (m8) at ($(origin)+ (\swidth,0) +(0,\height)+(0,0.2)$);
\draw [decorate, decoration = {calligraphic brace}, very thick] (m7) --  (m8);
\node[above] at ($(m7)! 0.5 !(m8) +(0,0.05)$)  {$\sigma_1$};

\draw[blue] (origin) -- ++(0.5*\swidth, 0)-- ++ (0,\height) -- ++ (-0.5*\swidth,0)-- cycle;
\node[below left] at ($ (origin)+(0.5*0.5*\swidth, 0.5*\height)  $) {$\eta$};

\draw[red] (s2origin) -- ++(0, 0+\height);
\draw[red] (s2origin) -- ++(0 + \swidth, 0);
\draw[red] (-\originx + 2*\swidth, -\originy+\height) -- (-\originx + 2*\swidth + \swidth, -\originy+\height);
\draw[red] (-\originx + 2*\swidth + \swidth, -\originy+\height) -- (-\originx + 2*\swidth + \swidth, -\originy);

\coordinate (m9) at ($(s2origin)+(0,\height)+(0,0.2)$);
\coordinate (m10) at ($(s2origin)+ (\swidth,0) +(0,\height)+(0,0.2)$);
\draw [decorate, decoration = {calligraphic brace}, very thick] (m9) --  (m10);
\node[above] at ($(m9)! 0.5 !(m10) +(0,0.05)$)  {$\sigma_2$};

\draw[blue] (s2origin) -- ++(0.5*\swidth, 0)-- ++ (0,\height) -- ++ (-0.5*\swidth,0)-- cycle;

\pgfmathsetmacro{\lorigin}{0.4}
\draw[ultra thick] (origin)+(\swidth, \lorigin*\height) --  ($ (180-\angle : 1.7) + (origin)+ (\swidth, \lorigin*\height ) $ );

\draw[name path=line][ultra thick] (origin)+(\swidth, \lorigin*\height) -- ($(-\angle : 3) + (origin)+(\swidth, \lorigin*\height)$) ;

\fill[red,name intersections={of=base and line,total=\t, by=inter}]
    \foreach \s in {1,...,\t}{(intersection-\s) circle (0pt) };

\pgfmathsetmacro{\radius}{0.5}
\draw[thick] (inter)+(\radius,0) arc (0: -\angle: \radius);
\node[right] at ($(inter)+ ( -0.5* \angle: \radius)$) {$\alpha$}; 

\filldraw [green] ($ (180-\angle : 0.6) + (origin)+ (\swidth, \lorigin*\height ) $ ) circle (0.7pt);
\node[below ] at ($ (180-\angle : 0.6) + (origin)+ (\swidth, \lorigin*\height ) $ ) {$P_1$}; 

\filldraw [green] ($ (180-\angle : 0.3) + (origin)+ (\swidth, \lorigin*\height ) $ ) circle (0.7pt);
\node[below ] at ($ (180-\angle : 0.3) + (origin)+ (\swidth, \lorigin*\height ) $ ) {$P_2$}; 

\filldraw [green] ($ (180-\angle : 0.9) + (origin)+ (\swidth, \lorigin*\height ) $ ) circle (0.7pt);

\filldraw [green] ($ (180-\angle : 1.2) + (origin)+ (\swidth, \lorigin*\height ) $ ) circle (0.7pt);

\draw[ultra thick] (s2origin)+(\swidth, \lorigin*\height) -- ($(180-\angle : 1.7) + (s2origin)+(\swidth, \lorigin*\height)$) ;

\draw[ultra thick] (s2origin)+(\swidth, \lorigin*\height) -- ($(-\angle : 1.6) + (s2origin)+(\swidth, \lorigin*\height)$) ;

\filldraw [green] ($(180-\angle : 1.2)+(s2origin)+(\swidth, \lorigin*\height)$) circle (0.7pt);
\node[below ] at ($(180-\angle : 1.2)+(s2origin)+(\swidth, \lorigin*\height)$) {$P_3$}; 
\filldraw [green] ($(180-\angle : 0.9)+(s2origin)+(\swidth, \lorigin*\height)$) circle (0.7pt);
\filldraw [green] ($(180-\angle : 0.6)+(s2origin)+(\swidth, \lorigin*\height)$) circle (0.7pt);
\filldraw [green] ($(180-\angle : 0.3)+(s2origin)+(\swidth, \lorigin*\height)$) circle (0.7pt);

\pgfmathsetmacro{\d}{2*\swidth * sin(\angle)}
\draw [thick, <->] (s2origin)+(\swidth, \lorigin*\height) -- ($(-90 -\angle : \d) + (s2origin)+(\swidth, \lorigin*\height)$) ;
\node[below right] at ($(s2origin)+(\swidth, \lorigin*\height)+ 0.5*(-90 -\angle : \d)$) {$d$};

\end{tikzpicture}
\end{center}
\begin{caption}{Lattice points and caps inside $\tau$}
\end{caption}
\end{figure}

Next, we prove separation between consecutive active $\sigma_1$ and $\sigma_2$, using the argument from Case 2. Pick two lattice points $P_1,P_2$ in $\sigma_1\cap l_{\sigma_1}$ with $\operatorname{dist}(P_1,P_2)\sim 2^m$ and pick a lattice point $P_3\in (\sigma_2\cap l_{\sigma_2})\setminus l_{\sigma_1}$. Letting $d=\operatorname{dist}(P_3,l_{\sigma_1})$, we find as before that $d2^m\gtrsim 1$. Also as before,
$$2^{-t}\delta^2\sim \alpha\sim \frac{d}{\operatorname{dist}(\sigma_1,\sigma_2)},$$
so $$\operatorname{dist}(\sigma_1,\sigma_2)\sim d2^t\delta^{-2}\sim d2^m2^{s+t}\delta^{-1}\gtrsim 2^{s+t}\delta^{-1}.$$

We cover each $\tau$ with $(2^{s+t}\delta^{-1},\delta)$-caps $\theta$. Each $\theta$ contains at most one active $\sigma$, and is contained in a unique  $\tau$. We call $\theta$ active if it contains some active $\sigma$, and we also call active the $\tau$ containing such $\theta$.
\smallskip

We decouple into caps $\theta$. Small cap decoupling is applicable, as $\theta$ is even  longer than in Case 2. Note first that
$$N_{total}\sim \lambda\delta 2^{-s-t}.$$
There are $\lesssim (\lambda\delta)^{1/2}2^{m-s} \sim 2^{2m} (\lambda\delta)^{1/2}\delta2^{-t}$ active $\tau$, each containing $\lesssim \frac{\delta(\lambda\delta)^{1/2}}{2^{s+t}}$ active $\theta$. Thus
the number of active $\theta$ satisfies
$$N_{active}\lesssim \lambda\delta^32^{2m-2t-s}.$$
Since $\theta$ now contains $\lesssim 2^{t+s}$ points, we have as before
$$\|f_\theta\|_{L^p(w_{B_R})}^p\lesssim \|f_\theta\|_{L^2(w_{B_R})}^2 2^{(s+t)(p-2)}\sim 2^{(s+t)(p-1)}R^2.$$
We first make the point that \eqref{dd3} is not strong enough in this case, as it leads to
\begin{align*}
\|\sum_{\eta:\;active}\Phi_{\eta}\|_{L^p([0,1]^2)}^p&\sim R^{-2}\|\sum_{\theta:\;active}f_{\theta}\|_{L^p([0,R]^2)}^p\\&\les R^{-2} N_{total}^{\frac{p}2-1}N_{active}\max_{\theta:active}\|f_\theta\|_{L^p(w_{B_R})}^p\\&\lesssim  (\lambda\delta)^{p/2}(2^{m+s}\delta)^22^{s(\frac{p}{2}-3)}2^{t(\frac{p}{2}-2)}\\&\sim (\lambda\delta)^{p/2}2^{s(\frac{p}{2}-3)}2^{t(\frac{p}{2}-2)}.
\end{align*}
When $p>4$, we cannot force this to be $\lesssim (\lambda\delta)^{p/2}$, as $t$ may be much larger than $s$. However, using instead \eqref{dd4} we find the desired upper bound
\begin{align*}
\|\sum_{\eta:\;active}\Phi_{\eta}\|_{L^p([0,1]^2)}^p&\sim R^{-2}\|\sum_{\theta:\;active}f_{\theta}\|_{L^p([0,R]^2)}^p\\&\les R^{-2} N_{total}^{3-\frac{p}2}N_{active}^{p-3}\max_{\theta:active}\|f_\theta\|_{L^p(w_{B_R})}^p\\&\lesssim  (\lambda\delta)^{p/2}2^{-(s+t)(3-\frac{p}{2})}(\delta^22^{2m-2t-s})^{p-3}2^{(s+t)(p-1)}\\&\lesssim (\lambda\delta)^{p/2}(\delta^22^{2m}2^{2s})^{p-3}2^{s(5-\frac{3p}{2})}2^{t(2-\frac{p}{2})}\\&\sim (\lambda\delta)^{p/2}2^{s(5-\frac{3p}{2})}2^{t(2-\frac{p}{2})}.
\end{align*}
This is $\lesssim (\lambda\delta)^{p/2}$ if $p\ge 4$.
\\
\\
Case 4. When $\alpha\ll \operatorname{ecc}(\tau)$, the points on distinct active $\tau$ are aligned in distinct directions. Thus, there can only be $O(2^{2m})$ active $\tau$. We have that $2^{s+m}\sim \delta^{-1}$, and each $\tau$ contains $\lesssim 2^s(\lambda\delta)^{1/2}\delta$ points. The desired bound follows from the $l^2(L^p)$ decoupling \eqref{dd11} into caps $\gamma=\tau$
\begin{align*}
\|\sum_{\eta:\;active}\Phi_{\eta}\|_{L^p([0,1]^2)}^p&\sim R^{-2}\|\sum_{\tau:\;active}f_{\tau}\|_{L^p([0,R]^2)}^p\\&\les R^{-2} N_{active}^{p/2}\max_{\tau:active}\|f_\tau\|_{L^p(w_{B_R})}^p\\&\lesssim  2^{mp}(2^s(\lambda\delta)^{1/2}\delta)^{p-1}\\&\sim (\lambda\delta)^{p/2}\frac{\delta^{-1}}{2^s\sqrt{\lambda\delta}}.
\end{align*}
This is $\lesssim (\lambda\delta)^{p/2}$ due to \eqref{dd7}.

\end{proof}
We may now prove Conjecture B in the range $\delta\ge \lambda^{-1/3}$.
\begin{cor}
\label{dd13}	
Assume $1\ge \delta\ge \lambda^{-1/3}$. Then Conjecture B holds for all $p$.	
\end{cor}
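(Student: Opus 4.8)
The plan is to bootstrap Theorem~\ref{dd12}, which is precisely Conjecture~\ref{conjkernel} at the critical width $\delta=\lambda^{4/p-1}$, by interpolation and by a decomposition of $\mathcal{A}_{\lambda,\delta}$ into thinner annuli. Two ingredients will be used repeatedly: the trivial endpoint bounds $\|\Phi_{\lambda,\delta}\|_{L^2}=(\#\mathcal{A}_{\lambda,\delta}')^{1/2}\les(\lambda\delta)^{1/2}$ and $\|\Phi_{\lambda,\delta}\|_{L^\infty}=\#\mathcal{A}_{\lambda,\delta}'\les\lambda\delta$, both immediate from Parseval and Lemma~\ref{lemmacounting1} (note $\lambda\delta\ge\lambda^{2/3}>1$); and the fact that $\Phi_{\lambda,\delta}$ is $\mathbb{Z}^2$-periodic, so that $\|\,\cdot\,\|_{L^p([0,1]^2)}\le\|\,\cdot\,\|_{L^q([0,1]^2)}$ for $p\le q$. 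Write $\delta_c(p):=\lambda^{4/p-1}$ for the critical width: it is decreasing in $p$, equals $1$ at $p=4$ and $\lambda^{-1/3}$ at $p=6$, and the conjectured bound for $\delta$ in our range is $(\lambda\delta)^{1/2}$ when $\delta\le\delta_c(p)$ and $\lambda^{1-2/p}\delta$ when $\delta\ge\delta_c(p)$, the two expressions coinciding at $\delta=\delta_c(p)$. Also, $\delta\ge\lambda^{-1/3}$ implies $\delta>\lambda^{-1+\kappa}$ with $\kappa=2/3$, so that Conjecture~\ref{conjkernel} is meaningfully in force.

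The core is the range $4\le p\le6$. If $\lambda^{-1/3}\le\delta\le\delta_c(p)$, I would set $\tilde p:=4/(1+\log_\lambda\delta)$, so that $\delta=\lambda^{4/\tilde p-1}=\delta_c(\tilde p)$; the two bounds on $\delta$ are then exactly equivalent to $p\le\tilde p\le6$, so Theorem~\ref{dd12} applies at exponent $\tilde p$ and, using $p\le\tilde p$ on the probability space, gives $\|\Phi_{\lambda,\delta}\|_{L^p}\le\|\Phi_{\lambda,\delta}\|_{L^{\tilde p}}\les(\lambda\delta)^{1/2}$, as desired. If instead $\delta_c(p)\le\delta\le1$, I would split $\mathcal{A}_{\lambda,\delta}$ into $M:=\lceil\delta/\delta_c(p)\rceil\sim\delta\lambda^{1-4/p}$ pairwise disjoint subannuli $\mathcal{A}_{\mu_i,\delta'}$ of common width $\delta'=\delta/M\sim\lambda^{4/p-1}$ and radii $\mu_i\sim\lambda$ (tile the radial interval evenly; boundary circles may be assumed free of lattice points), so that $\Phi_{\lambda,\delta}=\sum_{i=1}^M\Phi_{\mu_i,\delta'}$. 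Theorem~\ref{dd12} applies to each summand, giving $\|\Phi_{\mu_i,\delta'}\|_{L^p}\les(\lambda\delta')^{1/2}=\lambda^{2/p}$, and summing yields $\|\Phi_{\lambda,\delta}\|_{L^p}\le\sum_{i=1}^M\|\Phi_{\mu_i,\delta'}\|_{L^p}\les M\lambda^{2/p}\sim\lambda^{1-2/p}\delta$. This settles the range $4\le p\le6$.

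The remaining ranges follow by log-convexity of $L^p$ norms. For $2\le p\le4$ one has $\delta\le1=\delta_c(4)$, so the previous paragraph gives $\|\Phi_{\lambda,\delta}\|_{L^4}\les(\lambda\delta)^{1/2}$; interpolating with $\|\Phi_{\lambda,\delta}\|_{L^2}\les(\lambda\delta)^{1/2}$ gives $\|\Phi_{\lambda,\delta}\|_{L^p}\les(\lambda\delta)^{1/2}$, the conjectured bound in that range. For $6\le p\le\infty$ one has $\delta\ge\lambda^{-1/3}=\delta_c(6)$, so the previous paragraph gives $\|\Phi_{\lambda,\delta}\|_{L^6}\les\lambda^{2/3}\delta$; interpolating with $\|\Phi_{\lambda,\delta}\|_{L^\infty}\les\lambda\delta$, $\|\Phi_{\lambda,\delta}\|_{L^p}\le\|\Phi_{\lambda,\delta}\|_{L^6}^{6/p}\|\Phi_{\lambda,\delta}\|_{L^\infty}^{1-6/p}\les(\lambda^{2/3}\delta)^{6/p}(\lambda\delta)^{1-6/p}=\lambda^{1-2/p}\delta$, which is the conjectured bound, since $\delta\ge\lambda^{-1/3}$ forces $\delta\ge\delta_c(p)$ for all $p\ge6$ (the endpoint $p=\infty$ being the $L^\infty$ bound itself). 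This covers all $p\ge2$.

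The one point that genuinely needs care is the step ``Theorem~\ref{dd12} applies to each summand'' in the case $\delta\ge\delta_c(p)$: Theorem~\ref{dd12} is stated for the annulus $\mathcal{A}_{\lambda,\delta}$ with $\delta$ exactly $\lambda^{4/p-1}$, while the subannuli $\mathcal{A}_{\mu_i,\delta'}$ have widths $\delta'$ and radii $\mu_i$ that are only comparable to $\lambda^{4/p-1}$ and $\lambda$. What is needed is the (routine) robustness of Theorem~\ref{dd12} under $O(1)$ dilations: its proof uses $\delta\asymp\lambda^{4/p-1}$ only to place small cap decoupling at or below its critical exponent, and uses $\delta\gtrsim\lambda^{-1/3}$ (and the ensuing sub-half area of the caps $\eta$) only through geometric facts invariant under $O(1)$ rescaling, so the whole four-case argument goes through verbatim, with implied constants depending on those of the comparabilities. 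Alternatively, one may appeal to the fact that the proof of Theorem~\ref{dd12} in fact bounds the $L^p$ norm of $\sum_{k\in S}e^{2\pi ik\cdot x}$ for an arbitrary subset $S\subset\mathcal{A}_{\mu,\delta'}'$, which allows a cruder covering of $\mathcal{A}_{\lambda,\delta}$. Beyond this, the argument is just bookkeeping with the exponents $\tfrac12-\tfrac1p$.
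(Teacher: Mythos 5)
Your argument is correct, and for the subcritical half (\(\delta \le \lambda^{4/p-1}\)) it is identical to the paper's: find the \(p_\delta=\tilde p\in[4,6]\) for which \(\delta\) is critical, apply Theorem~\ref{dd12} there, and pass to smaller \(p\) by H\"older. For the supercritical half you take a genuinely different route: rather than invoking the trivial \(L^\infty\) bound \(\|\Phi_{\lambda,\delta}\|_{L^\infty}\les\lambda\delta\), you slice the annulus into \(\sim\delta\lambda^{1-4/p}\) subannuli of critical width, apply Theorem~\ref{dd12} to each, and sum by the triangle inequality. This works, and your exponent bookkeeping is right, but it buys you nothing and costs you the robustness discussion you flag at the end (Theorem~\ref{dd12} is stated at \(\delta=\lambda^{4/p-1}\) exactly, so you must argue that it survives \(O(1)\) dilations of \(\delta\) and of the radius). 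The paper's own supercritical argument is the one-liner
\[
\int|\Phi_{\lambda,\delta}|^{p}\le\int|\Phi_{\lambda,\delta}|^{p_\delta}\,\|\Phi_{\lambda,\delta}\|_{L^\infty}^{p-p_\delta}\les(\lambda\delta)^{p_\delta/2}(\lambda\delta)^{p-p_\delta}=\lambda^{p-2}\delta^{p},
\]
which applies uniformly for all \(p>p_\delta\), requires no rescaling of Theorem~\ref{dd12}, and also subsumes your separate \(L^6\)--\(L^\infty\) interpolation for \(p\ge6\). You in fact use the same \(L^\infty\) interpolation for \(p\ge 6\); if you apply it already from \(p_\delta\) rather than from \(6\), the whole subdivision step (and the attendant robustness argument) disappears and you recover the paper's proof.
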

\begin{proof}
There is $p_\delta\in[4,6]$ such that $\delta=\lambda^{\frac4{p_\delta}-1}$. If $p\le p_\delta$, the result follows from Theorem \ref{dd12} and H\"older's inequality. When $p>p_\delta$, it follows from the same  theorem and the $L^\infty$ bound
$$\int|\Phi_{\lambda,\delta}|^p\le \int|\Phi_{\lambda,\delta}|^{p_\delta}(\lambda\delta)^{p-p_\delta}\les (\lambda\delta)^{p_\delta/2}(\lambda\delta)^{p-p_\delta}=\frac{(\lambda\delta)^p}{\lambda^2}.$$
\end{proof}	
A simple application of $l^2(L^6)$ decoupling gives the following.
\begin{prop}
\label{dd15}	
Conjecture B holds for $p\le 6$ and $\delta\le \lambda^{-1/3}$.
\end{prop}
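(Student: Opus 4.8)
The plan is to reduce immediately to the endpoint $p=6$ and then push the cap decomposition of Section~\ref{sectioncounting} through the $\ell^2(L^6)$ decoupling inequality of Lemma~\ref{lemmadecoupling}. Since $[0,1]^2$ has measure one, H\"older's inequality gives $\|\Phi_{\lambda,\delta}\|_{L^p([0,1]^2)}\le\|\Phi_{\lambda,\delta}\|_{L^6([0,1]^2)}$ for every $p\le6$; and since $p\le6$ forces $\tfrac4p-1\ge-\tfrac13$, we have $\delta\le\lambda^{-1/3}\le\lambda^{4/p-1}$, so $\lambda^{1-2/p}\delta\le(\lambda\delta)^{1/2}$ and the right-hand side of Conjecture~\ref{conjkernel} is comparable to $(\lambda\delta)^{1/2}$. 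Hence it suffices to prove
$$\|\Phi_{\lambda,\delta}\|_{L^6([0,1]^2)}\les(\lambda\delta)^{1/2}.$$
This also explains why the difficulty of the range $4<p<6$ present in Theorem~\ref{dd12} evaporates here: the target exponent $(\lambda\delta)^{1/2}$ is the same for every $p\le6$.

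The one structural input is that $\delta\le\lambda^{-1/3}$ is equivalent to $\lambda^{1/2}\delta^{3/2}\le1$, i.e.\ the area of a cap is at most $1$. Consequently, in the splitting $\mathcal{C}=\mathcal{C}_0\cup\bigcup_s\mathcal{C}_s$ of Section~\ref{sectioncounting} the caps of $\mathcal{C}_0$, which by definition contain $\lesssim1+\lambda^{1/2}\delta^{3/2}$ lattice points, contain only $O(1)$ of them, while the dyadic index runs over $1\ll2^s\lesssim(\lambda\delta)^{1/2}$ (a nonempty range, since $\delta>\lambda^{-1+\kappa}$). I would fix a partition of $\mathcal{A}_{\lambda,\delta}\cap\mathbb{Z}^2$ subordinate to $\mathcal{C}$ and set $\Phi^0=\sum_{\theta\in\mathcal{C}_0}\Phi_\theta$ and $\Phi^s=\sum_{\theta\in\mathcal{C}_s}\Phi_\theta$, where $\Phi_\theta$ collects the frequencies of $\Phi_{\lambda,\delta}$ assigned to $\theta$; then $\Phi_{\lambda,\delta}=\Phi^0+\sum_s\Phi^s$ and the Fourier support of $\Phi^s$ meets each cap of $\mathcal{C}$ in $\lesssim2^s$ points.

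For the dyadic pieces I would apply Lemma~\ref{lemmadecoupling} to $\Phi^s$ with $N\sim2^s$ to obtain $\|\Phi^s\|_{L^6}\les2^{s/3}\|\Phi^s\|_{L^2}$; by Parseval, the definition of $\mathcal{C}_s$ and the cap counting Lemma~\ref{lemmacounting2},
$$\|\Phi^s\|_{L^2}^2\lesssim 2^s\,\#\mathcal{C}_s\lesssim 2^s\cdot\lambda\delta\,2^{-2s}=\lambda\delta\,2^{-s},$$
so that $\|\Phi^s\|_{L^6}\les(\lambda\delta)^{1/2}2^{-s/6}$, which sums over $s$ to $(\lambda\delta)^{1/2}$. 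For $\Phi^0$ I would apply Lemma~\ref{lemmadecoupling} with $N=O(1)$, which gives $\|\Phi^0\|_{L^6}\les\|\Phi^0\|_{L^2}$, and here $\|\Phi^0\|_{L^2}^2$ equals the number of lattice points assigned to $\mathcal{C}_0$, which is $\le\#\mathcal{A}_{\lambda,\delta}'\lesssim\lambda^{1+\epsilon}\delta$ by the divisor bound of Lemma~\ref{lemmacounting1}; hence $\|\Phi^0\|_{L^6}\les(\lambda\delta)^{1/2}$. Combining the two contributions by the triangle inequality proves the claim; the $\lambda^\epsilon$ losses in Lemmas~\ref{lemmadecoupling} and~\ref{lemmacounting1} and the $O(\log\lambda)$ dyadic scales are all absorbed into the $\les$ notation, using $\delta^{-1}\le\lambda$.

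The only delicate point — and the unique place where $\delta\le\lambda^{-1/3}$ is genuinely used — is the treatment of $\Phi^0$: one cannot afford to estimate $\|\Phi^0\|_{L^2}^2$ by the product of the number of caps and the maximal number of lattice points in a cap (which would give $\sim\lambda^{1/2}\delta^{-1/2}$, too large), and the global divisor count $\#\mathcal{A}_{\lambda,\delta}'\lesssim\lambda^{1+\epsilon}\delta$ is essential. Moreover, if one tries to run the same argument for $\delta>\lambda^{-1/3}$, the caps of $\mathcal{C}_0$ may carry up to $\lambda^{1/2}\delta^{3/2}\gg1$ lattice points, and Lemma~\ref{lemmadecoupling} then only delivers $\|\Phi^0\|_{L^6}\les(\lambda^{1/2}\delta^{3/2})^{1/3}(\lambda\delta)^{1/2}=\lambda^{2/3}\delta$, which exceeds $(\lambda\delta)^{1/2}$ precisely when $\delta>\lambda^{-1/3}$ — which is why that regime must instead be handled by the considerably harder Theorem~\ref{dd12}.
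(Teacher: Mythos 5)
Your proof is correct and follows essentially the same path as the paper's: reduce to $p=6$ by Hölder, decouple into $(\lambda\delta)^{1/2}\times\delta$ caps via $\ell^2(L^6)$ decoupling, count caps with $\sim 2^s$ points via Lemma~\ref{lemmacounting2}, and handle the $O(1)$-point caps with the divisor bound of Lemma~\ref{lemmacounting1}. The only cosmetic difference is that you route the decoupling through Lemma~\ref{lemmadecoupling} applied to each dyadic piece $\Phi^s$, whereas the paper invokes the raw $\ell^2(L^6)$ inequality \eqref{dd11} once and pigeonholes in $s$; the underlying estimate is the same.
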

\begin{proof}
We may assume $\delta\ll \lambda^{-1/3}$.
The case $p<6$ follows from $p=6$ and H\"older's inequality. When $p=6$ we use $l^2(L^6)$ decoupling \eqref{dd11}. More precisely, we decouple into $((\lambda\delta)^{1/2},\delta)$-caps $\tau$. Their volume is $\le 1/2$, so lattice points in $\tau$ are contained in a line. Reasoning as before, there are $N_{active}\les \frac{\lambda\delta}{2^{2s}}$ caps $\tau$ with $\sim 2^{2s}$ points. Then \eqref{dd11} gives
$$\int|\Phi_{\lambda,\delta}|^6\les \max_{s\ge 0}(N_{active})^{3}2^{5s}\les  \max_{s\ge 0}2^{-s}(\lambda\delta)^3\sim (\lambda\delta)^3.$$

\end{proof}
\begin{rem}
A small improvement over the results presented here, which would reach the region $\delta < \lambda^{-\frac 13}$, $p>6$ for Conjecture B, are possible with the help of bounds on exponential sums. We will not pursue this approach, in order to avoid further technical details.
\end{rem}

Theorem \ref{dd12} and thus also Corollary \ref{dd13} continue to hold true (via the same argument) if $\mathcal{A}_{\lambda,\delta}$ are the lattice points in the $\delta$-neighborhood of  $\lambda\Gamma$, where $\Gamma$ is any curve satisfying \eqref{dd14}.
However, Proposition \ref{dd15} needs the fact that $\mathcal{A}_{\lambda,\delta}$ contains $\les \lambda\delta$ lattice points. Its proof relies on this bound in order to guarantee that $N_{active}\les \lambda\delta$ for the caps $\tau$ containing only one point. For caps with at least two points, the upper bound $\les \lambda\delta/2^s$ remains true  for arbitrary $\Gamma$ as in \eqref{dd14}, via the same geometric argument, that only exploits curvature. The fact that this inequality is also true for caps with one point in the case of $S^1$ is an consequence of Lemma \ref{lemmacounting1}. 

\begin{rem} \label{remarkparabola}
For certain $\Gamma$,  the analog of the set $\mathcal{A}_{\lambda,\delta}$ may contain significantly more lattice points than $\lambda\delta$, when $\delta$ is significantly smaller than $\lambda^{-1/3}$. One such example is the parabola $\Gamma_{\mathbb{P}_1}(\xi)=\xi^2$. If $\lambda^{1/2}=n$ is an integer, then $\lambda\Gamma_{\mathbb{P}_1}$ contains $\sim \lambda^{1/2}$ lattice points $(nl,l^2), |l|\le n$, far more than $\lambda\delta$ when $\delta \ll \lambda^{-1/2}$ (this is as much as possible for a general curve, up to a subpolynomial factor, by~\cite{BombieriPila}). 

In the case of the parabola, these points are arranged along an arithmetic progression in the horizontal direction, which leads to constructive interference on a large set: note that
 $$\left|\sum_{|l|\le n}e(lnx_1+l^2x_2)\right|\sim n$$
 if $x_1\in \cup_{j\le n}[j/n,j/n+1/10n^2],\;|x_2|\le 1/10n^2$. Thus
\begin{equation}
\label{ineqparabola}
\int_{[0,1]^2} \left|\sum_{|l|\le n}e(lnx_1+l^2x_2)\right|^pdx_1dx_2\gtrsim \lambda^{\frac p2-\frac 32}.
\end{equation}

Recall that the generalized projection operator and its kernel are defined by
$$
\widetilde{P_{\lambda,\delta}} f = \sum_{k \in \mathcal{N}_\delta(\lambda \Gamma) \cap \mathbb{Z}^2}\widehat{f_k} e^{2\pi i k\cdot x} \quad \mbox{and} \quad \widetilde{\Phi_{\lambda,\delta}}(x) = \sum_{k \in \mathcal{N}_\delta(\lambda \Gamma) \cap \mathbb{Z}^2} e^{2\pi i k \cdot x}.
$$

From the inequality~\eqref{ineqparabola}, it follows that
$$
\|\widetilde{P_{\lambda,\delta}}\|_{L^2\to L^p}\gtrsim \lambda^{\frac14-\frac3{2p}} \quad \mbox{and} \quad \|\widetilde{\Phi_{\lambda,\delta}}\|_{L^p} \gtrsim \lambda^{\frac{1}{2} - \frac{3}{2p}},
$$
which shows that conjectures~\ref{conjproj} and~\ref{conjkernel} (for the latter, at least when $p$ is even) do not apply to the parabola for small enough $\delta$. We refrain from making a precise conjecture for the parabola, as there might be additional examples.
\end{rem}

\section{Graphical representation}
\label{sectiongraphical}
Figure~\ref{loriot} represents in the coordinates $(\frac{1}{p},-\frac{\log \delta}{\log p})$ the different regions where Conjecture~\ref{conjproj} is verified. The vertical coordinate gives the size of $\delta$ which decreases, making the problem harder, as one goes up in the picture; for the bottom line $\delta=1$, the conjecture corresponds to Sogge's theorem. The horizontal coordinate gives the size of $p$; if $p\leq 10$ and $p=\infty$, the conjecture is settled (with $\epsilon$ loss), but it appears that intermediate values of $p$ are harder to understand.

\begin{figure}
\centering
\begin{tikzpicture}
\begin{axis}[ xmin=0, xmax=.25, ymin=0, ymax=1 , xtick=0, ytick=0 ]
\addplot[name path=A,domain=0:1,samples=100, opacity=0] {1};
\addplot[name path=B,domain=0:1,samples=100, opacity=0] {0};
\addplot[name path=C,domain=0:1,samples=100, opacity=0] {0.33333};
\addplot[blue!50] fill between[of=B and A, soft clip={domain=0.166666:1}];
\addplot[blue!30] fill between[of=B and A, soft clip={domain=0.1:0.166666}];
\addplot[blue!30] fill between[of=B and C, soft clip={domain=0:0.1}];
\addplot [domain=0:1, samples=100, name path=g, opacity=0] {(1-6*x)/(3-2*x)};
\addplot[blue!50] fill between[of=g and B, soft clip={domain=0:1}];
\addplot [domain=0:1, samples=100, name path=g, opacity=0] {(10-64*x)/(29-14*x)};
\addplot[blue!50] fill between[of=g and B, soft clip={domain=0:1}];
\addplot [domain=0:1, samples=100, name path=f, thick, color=red!50] {1-8*x/(1+2*x)};
\end{axis}
\draw(4.6,0) node{$\bullet$};
\draw(4.6,-.5) node{$\frac{1}{6}$}; 
\draw(6.85,0) node{$\bullet$};
\draw(6.85,-.5) node{$\frac{1}{4}$}; 
\draw(7.5,-.5) node{$\frac{1}{p}$};
\draw(2.76,0) node{$\bullet$};
\draw(2.76,-.5) node{$\frac{1}{10}$}; 
\draw(0,5.67) node{$\bullet$};
\draw(-.5,5.67) node{$1$};
\draw(-2,5.67) node{$\alpha=-\frac{\log \delta}{\log \lambda}$};
\draw(0,1.89) node{$\bullet$};
\draw(-.5,1.89) node{$\frac{1}{3}$};
\end{tikzpicture}
\caption{\label{loriot}
The vertical axis corresponds to $\alpha=-\frac{\log \delta}{\log \lambda}$, and the horizontal axis to $\frac 1p$.
 In the dark blue region, Conjecture~\ref{conjproj} is verified; in the light blue region, it is verified with an $\epsilon$-loss. The red line is the curve $\delta = \lambda^{-1 + \frac{8}{p+2}}$, which separates the region where the conjecture is $\lambda^{\frac{1}{2}-\frac{2}{p}} \delta^{\frac{1}{2}}$ (below) from the region where the conjecture is $(\lambda \delta)^{\frac{1}{4}-\frac{1}{2p}}$ (above).}
\end{figure}
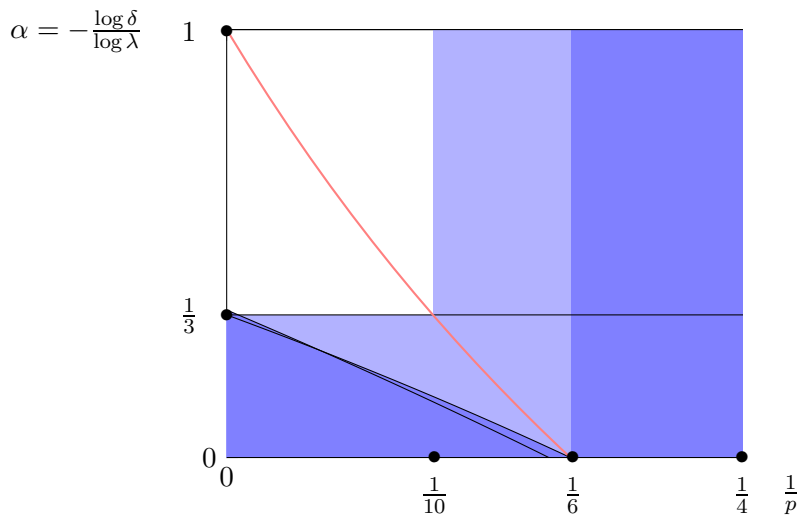

If the conjecture holds at a given point in the above diagram, then it is also true on a whole region depending on that point. These implications are summarized in the following lemma.

\begin{lem} \label{hirondelle}
\begin{itemize} \item[(i)] If Conjecture~\ref{conjproj} is satisfied at a point $(\frac{1}{p_0},\alpha_0)$ below the red curve, consider the rectangle with that point as its top right vertex. Then the conjecture holds at any point in that rectangle.
\item[(ii)] If Conjecture~\ref{conjkernel} is satisfied at a point $(\frac{1}{p_0},\alpha_0)$ above the red curve, then it holds to the right of this point, that is on the segment joining this point to $(\frac{1}{2},\alpha_0)$.
\end{itemize}
\end{lem}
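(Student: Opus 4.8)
The plan is to verify each part by a couple of ``free'' propagation moves: one that fixes $p$ and enlarges $\delta$, and one that fixes $\delta$ and moves $p$.

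\textbf{Part (i).} The region below the red curve is closed under decreasing $\alpha$ and under decreasing $\tfrac1p$ (since $1-\tfrac{8}{p+2}$ increases with $p$), so the whole rectangle lies below the red curve, where the conjectured bound is the Knapp term $\lambda^{\frac12-\frac2p}\delta^{\frac12}$. \emph{Moving down} (enlarging $\delta$, with $p=p_0$): set $\delta_0=\lambda^{-\alpha_0}$ and cover $\mathcal{A}_{\lambda,\delta}$ by $\sim\delta/\delta_0$ annuli $\mathcal{A}_{\lambda_j,\delta_0}$, $\lambda_j\sim\lambda$, with essentially disjoint lattice sets, so that $P_{\lambda,\delta}f=\sum_j P_{\lambda_j,\delta_0}P_{\lambda,\delta}f$; the triangle inequality, the hypothesis applied to each $P_{\lambda_j,\delta_0}$, Cauchy--Schwarz in $j$, and $\ell^2$-orthogonality give
$$
\|P_{\lambda,\delta}\|_{L^2\to L^{p_0}}\lesssim\Big(\frac{\delta}{\delta_0}\Big)^{\frac12}\|P_{\lambda,\delta_0}\|_{L^2\to L^{p_0}}\lesssim\Big(\frac{\delta}{\delta_0}\Big)^{\frac12}\lambda^{\frac12-\frac2{p_0}}\delta_0^{\frac12}=\lambda^{\frac12-\frac2{p_0}}\delta^{\frac12},
$$
which is the conjectured bound at $(\tfrac1{p_0},\alpha)$ for all $\alpha\le\alpha_0$. \emph{Moving left} (enlarging $p$, with $\delta$ fixed): interpolate the hypothesis at $p_0$ with the endpoint $\|P_{\lambda,\delta}\|_{L^2\to L^\infty}=(\# \mathcal{A}_{\lambda,\delta}')^{1/2}\lesssim_\epsilon\lambda^\epsilon(\lambda\delta)^{1/2}$ (divisor bound); since $\tfrac1p\mapsto\lambda^{\frac12-\frac2p}\delta^{\frac12}$ interpolates correctly between $p_0$ and $\infty$, this gives the conjectured bound for all $p\in[p_0,\infty]$. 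Composing the two moves reaches every point of the rectangle. (The only imperfection is an $\epsilon$-loss at the $L^\infty$ endpoint; removing it would need the $p=\infty$ case, i.e. the Gauss circle estimate $\# \mathcal{A}_{\lambda,\delta}'\lesssim\lambda\delta$.)

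\textbf{Part (ii).} First suppose $\delta<\lambda^{\frac{4}{p_0}-1}$, so that the conjectured kernel bound at $p_0$ is the diagonal term $(\lambda\delta)^{1/2}$; since $\lambda^{\frac4p-1}$ only increases as $p$ decreases, we stay in the regime $\delta<\lambda^{\frac4p-1}$ for every $p\in[2,p_0]$, so the conjectured bound is $(\lambda\delta)^{1/2}$ throughout that segment. As $[0,1]^2$ is a probability space, $p\mapsto\|\Phi_{\lambda,\delta}\|_{L^p([0,1]^2)}$ is non-decreasing, so the hypothesis yields, for $2\le p\le p_0$,
$$
\|\Phi_{\lambda,\delta}\|_{L^p([0,1]^2)}\le\|\Phi_{\lambda,\delta}\|_{L^{p_0}([0,1]^2)}\lesssim(\lambda\delta)^{1/2},
$$
which is exactly the conjectured bound at $(\tfrac1p,\alpha_0)$.

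The step I expect to be delicate is the remaining sliver $\lambda^{\frac{4}{p_0}-1}<\delta<\lambda^{-1+\frac{8}{p_0+2}}$ of the region above the red curve, where the conjectured kernel bound is $\lambda^{1-\frac2{p_0}}\delta$ rather than $(\lambda\delta)^{1/2}$. There, interpolating $L^q$-norms against $L^2$ or $L^4$ is too lossy in the power of $\delta$; one has to interpolate the hypothesis at $p_0$ against an estimate already of the shape $\lambda^{1-\frac2q}\delta$, namely the square-root cancellation $\|\Phi_{\lambda,\delta}\|_{L^q}\lesssim_\epsilon\lambda^\epsilon(\lambda\delta)^{1/2}$ at the critical scale $\delta=\lambda^{\frac4q-1}$ furnished by Theorem~\ref{dd12}, which is presently available only for $q\le 6$ (see the remark after Proposition~\ref{dd15}). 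Alternatively one can route this sliver through the partial equivalence of Lemma~\ref{equivalence} with Conjecture~\ref{conjproj} and reapply part (i). This is the one point of the lemma that is not completely formal.
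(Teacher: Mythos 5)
The part (i) ``moving down'' step is fine: your decomposition $P_{\lambda,\delta}f = \sum_j P_{\lambda_j,\delta_0}P_{\lambda,\delta}f$ followed by the hypothesis on each piece, Cauchy--Schwarz in $j$, and $\ell^2$-orthogonality gives exactly $\|P_{\lambda,\delta}\|_{L^2\to L^{p_0}}\lesssim (\delta/\delta_0)^{1/2}\lambda^{\frac12-\frac2{p_0}}\delta_0^{1/2}$. The paper reaches the same bound by a slightly different packaging: since $P_{\lambda,\delta}$ is a projection, $\|P_{\lambda,\delta}\|_{L^2\to L^p}^2=\|P_{\lambda,\delta}\|_{L^{p'}\to L^p}$ (the $TT^*$ identity), and then the triangle inequality over the $O(\delta/\delta_0)$ sub-projections in the $L^{p'}\to L^p$ norm produces the factor $\delta/\delta_0$ before taking the square root. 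Both are correct and lossless.

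The part (i) ``moving left'' step is where your argument has a genuine gap. Interpolating the $p_0$ hypothesis against $\|P_{\lambda,\delta}\|_{L^2\to L^\infty}=(\#\mathcal{A}_{\lambda,\delta}')^{1/2}$ injects the extraneous $L^\infty$ endpoint, which via the divisor bound only gives $\lambda^\epsilon(\lambda\delta)^{1/2}$; so you land on the conjecture with an $\epsilon$-loss, whereas the lemma asserts the conjecture without loss. The paper's route avoids this entirely: since $P_{\lambda,\delta}f$ has Fourier support in a ball of radius $\sim\lambda$, Bernstein's inequality on $\mathbb{T}^2$ gives $\|P_{\lambda,\delta}f\|_{L^p}\lesssim \lambda^{\frac2{p_0}-\frac2p}\|P_{\lambda,\delta}f\|_{L^{p_0}}$ directly for $p>p_0$. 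The factor $\lambda^{\frac2{p_0}-\frac2p}$ is precisely what converts $\lambda^{\frac12-\frac2{p_0}}\delta^{1/2}$ into $\lambda^{\frac12-\frac2p}\delta^{1/2}$, with no $\epsilon$-loss and no appeal to lattice-point counting. Replacing your $L^\infty$ interpolation with this one Bernstein step closes the gap.

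For part (ii), the difficulty you flag is an artifact of the statement: ``Conjecture~\ref{conjkernel}'' there is a typo for ``Conjecture~\ref{conjproj}'' --- all of Lemma~\ref{hirondelle} is phrased with respect to the $(\frac1p,\alpha)$-diagram of Figure~\ref{loriot} (Conjecture~\ref{conjproj}), with the companion statement for the kernel conjecture appearing as a separate, unlabeled lemma after Figure~\ref{pivert}. Under the intended reading, the proof is a one-line Riesz--Thorin interpolation between the hypothesis at $p_0$ and the trivial $\|P_{\lambda,\delta}\|_{L^2\to L^2}\lesssim 1$: with $\frac1p-\frac12=\theta(\frac1{p_0}-\frac12)$ one gets $(\lambda\delta)^{\theta(\frac14-\frac1{2p_0})}=(\lambda\delta)^{\frac14-\frac1{2p}}$. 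There is no sliver to worry about, because above the red curve of Figure~\ref{loriot} the conjectured bound for Conjecture~\ref{conjproj} is always the diagonal term, and the region above that curve is preserved when $\frac1p$ increases. Your monotonicity-of-$L^p$-norms argument is the right tool for the analogous statement about Conjecture~\ref{conjkernel} (with the red curve of Figure~\ref{pivert}, above which the kernel bound is the $p$-independent $(\lambda\delta)^{1/2}$); there too, the paper actually interpolates against $\|\Phi_{\lambda,\delta}\|_{L^2}\lesssim\lambda^\epsilon(\lambda\delta)^{1/2}$ rather than using monotonicity, which is why the companion lemma's part (ii) is stated only ``with $\epsilon$ loss.''
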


\begin{proof}
\noindent $(i)$ We will show that the conjecture holds at points $(\frac{1}{p},\alpha_0)$, with $p > p_0$, and also at points $(\frac{1}{p},\alpha)$, with $\alpha < \alpha_0$; this will prove the assertion.
\begin{itemize}
\item If $p>p_0$, the Bernstein inequality gives $$\| P_{\lambda,\delta} f \|_{L^p} \lesssim \lambda^{\frac{2}{p_0}-\frac{2}{p}} \| P_{\lambda,\delta} f \|_{L^{p_0}} \lesssim \lambda^{\frac{2}{p_0}-\frac{2}{p}} \lambda^{\frac{1}{2} - \frac{2}{p_0}} \delta^{\frac{1}{2}} \| f \|_{L^2} \lesssim \lambda^{\frac{1}{2} - \frac{2}{p}} \delta^{\frac{1}{2}} \| f \|_{L^{2}}.$$
\item To deal with the case $\alpha < \alpha_0$, we observe first that the classical $TT^*$ argument shows that $\| P_{\lambda,\delta} \|_{L^2\to L^p} = \| P_{\lambda,\delta} \|_{L^{p'}\to L^p}$.
Assume now that 
$$
\| P_{\lambda_0,\delta_0} \|_{L^2 \to L^{p_0}} = C_0 \lambda_0^{\frac{1}{2} - \frac{2}{p}} \delta^{\frac{1}{2}}
$$
for constants $(\lambda_0,\delta_0,p_0,C_0)$, and consider $\delta > \delta_0$. Then the interval $(\lambda-\delta,\lambda+\delta)$ can be covered by $O(\delta / \delta_0)$ disjoint intervals $(I_j)$ of length $\delta_0$ and
$$
\| P_{\lambda_0,\delta} \|_{L^2 \to L^p_0}^2 \leq \Big\| \sum_j P_{I_j} \Big\|_{L^2 \to L^{p_0}}^2 = \Big\| \sum_j P_{I_j} \Big\|_{L^{p_0'} \to L^{p_0}} \leq \sum_j \left\|P_{I_j} \right\|_{L^{p_0'} \to L^{p_0}} \lesssim \frac{\delta}{\delta_0} \lambda_0^{1-\frac{4}{p_0}} \delta_0 \lesssim\lambda_0^{1-\frac{4}{p_0}} \delta.
$$
\end{itemize}

\medskip 
\noindent
$(ii)$ is a consequence of interpolation, and of the trivial bound $\| P_{\lambda,\delta} \|_{L^2 \to L^2} \lesssim 1$: if $2 < p < p_0$, choosing $\theta$ such that $\frac 1 p - \frac 1 2 = \theta \left( \frac 1 {p_0} - \frac{1}{2} \right)$,
$$
\| P_{\lambda,\delta} \|_{L^2 \to L^p} \lesssim \| P_{\lambda,\delta} \|_{L^2 \to L^2}^{1-\theta} \| P_{\lambda,\delta} \|_{L^2 \to L^{p_0}}^\theta \lesssim (\lambda \delta)^{\theta \left( \frac 1 4 - \frac 1 {2p_0} \right)} = (\lambda \delta)^{\frac 1 4 - \frac 1 {2p}}.
$$
\end{proof}

We now turn to Conjecture~\ref{conjkernel}; Figure~\ref{pivert} depicts the different regions where it is verified. Furthermore, if the conjecture holds at a given point in this diagram, then it follows on a region depending on that point. Such implications are summarized in the following lemma.

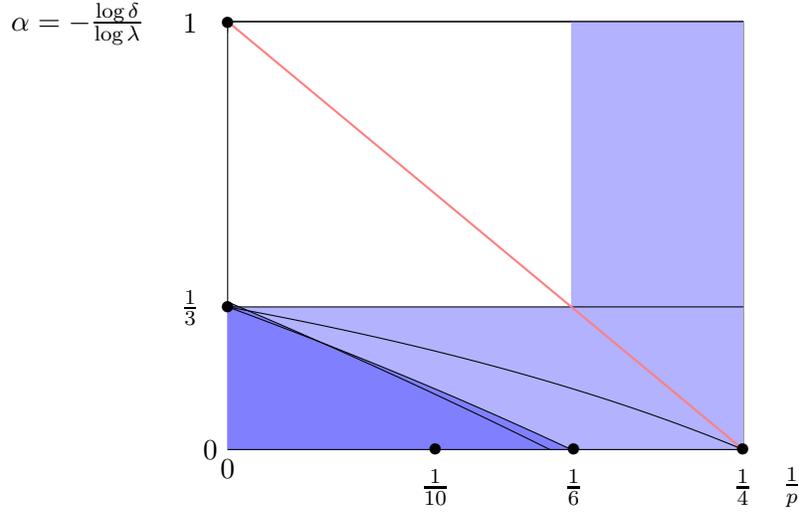
\begin{figure}
\centering
\begin{tikzpicture}
\begin{axis}[
   		xmin=0, xmax=.25,
   		ymin=0, ymax=1,
   		xtick=0, ytick=0 ]
\addplot[name path=A,domain=0:1,samples=100, opacity=0] {1};
\addplot[name path=B,domain=0:1,samples=100, opacity=0] {0};
\addplot[name path=C,domain=0:1,samples=100, opacity=0] {0.33333};
\addplot[blue!30] fill between[of=B and A, soft clip={domain=0.166666:0.25}];
\addplot[blue!30] fill between[of=B and C, soft clip={domain=0:0.166666}];
\addplot [domain=0:0.25, samples=100, name path=H, opacity=0] {(1-4*x)/(3-4*x)};
\addplot [domain=0:0.25, samples=100, name path=g, opacity=0] {(1-6*x)/(3-2*x)};
\addplot[blue!50] fill between[of=g and B, soft clip={domain=0:0.25}];
\addplot [domain=0:0.25, samples=100, name path=g, opacity=0] {(10-64*x)/(29-14*x)};
\addplot[blue!50] fill between[of=g and B, soft clip={domain=0:0.25}];
\addplot [domain=0:0.25, samples=100, name path=f, thick, color=red!50] {1-4*x};
\end{axis}
\draw(4.6,0) node{$\bullet$};
\draw(4.6,-.5) node{$\frac{1}{6}$}; 
\draw(6.85,0) node{$\bullet$};
\draw(6.85,-.5) node{$\frac{1}{4}$}; 
\draw(7.5,-.5) node{$\frac{1}{p}$};
\draw(2.76,0) node{$\bullet$};
\draw(2.76,-.5) node{$\frac{1}{10}$}; 
\draw(0,5.67) node{$\bullet$};
\draw(-.5,5.67) node{$1$};
\draw(-2,5.67) node{$\alpha=-\frac{\log \delta}{\log \lambda}$};
\draw(0,1.89) node{$\bullet$};
\draw(-.5,1.89) node{$\frac{1}{3}$};
\end{tikzpicture}
\caption{\label{pivert}
The vertical axis corresponds to $\alpha=-\frac{\log \delta}{\log \lambda}$, and the horizontal axis to $\frac 1p$.
 In the dark blue region, Conjecture~\ref{conjkernel} is verified without loss; in the light blue region, it is verified with an $\epsilon$-loss. The red line is the curve $\delta = \lambda^{-1 + \frac{4}{p}}$, which separates the region where the conjecture is $\lambda^{1-\frac{2}{p}} \delta$ (below) from the region where the conjecture is $(\lambda\delta)^{\frac{1}{2}}$ (above).}
\end{figure}

\begin{lem}
\begin{itemize} \item[(i)] If Conjecture~\ref{conjkernel} is satisfied at a point $(\frac{1}{p_0},\alpha_0)$ below the red curve, consider the rectangle with that point as its top right vertex. Then the conjecture holds at any point in that rectangle.
\item[(ii)] If Conjecture~\ref{conjkernel} is satisfied with $\epsilon$ loss at a point $(\frac{1}{p_0},\alpha_0)$ above the red curve, then it holds with $\epsilon$ loss to the right of this point, that is on the segment joining this point to $(\frac{1}{2},\alpha_0)$.
\end{itemize}
\end{lem}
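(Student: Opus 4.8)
The plan is to follow the proof of Lemma~\ref{hirondelle} essentially verbatim, but working with the kernel $\Phi_{\lambda,\delta}$ in place of the operator $P_{\lambda,\delta}$. Throughout one should note that the red curve $\delta=\lambda^{-1+\frac 4p}$ is compatible with the moves performed: enlarging $p$, or enlarging $\delta$, starting from a point below (resp. above) it keeps one below (resp. above) it, so the relevant branch of the conjectured bound, namely $\lambda^{1-\frac 2p}\delta$ (resp. $(\lambda\delta)^{\frac 12}$), stays the same at every point considered.

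For part $(i)$, set $\delta_0=\lambda^{-\alpha_0}$; being below the red curve means $\delta_0>\lambda^{-1+\frac 4{p_0}}$, so the hypothesis reads $\|\Phi_{\lambda,\delta_0}\|_{L^{p_0}}\lesssim\lambda^{1-\frac 2{p_0}}\delta_0$, and the conjectured bound at every point of the rectangle is the corresponding $\lambda^{1-\frac 2p}\delta$. It suffices to move in each coordinate separately. To reach a point $(\frac 1p,\alpha_0)$ with $p>p_0$, I would apply Bernstein's inequality on $\mathbb{T}^2$ exactly as in the first bullet of the proof of Lemma~\ref{hirondelle}: since $\Phi_{\lambda,\delta}$ has spectrum in a ball of radius $\lesssim\lambda$, $\|\Phi_{\lambda,\delta}\|_{L^p}\lesssim\lambda^{\frac 2{p_0}-\frac 2p}\|\Phi_{\lambda,\delta}\|_{L^{p_0}}\lesssim\lambda^{1-\frac 2p}\delta$. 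To reach a point $(\frac 1{p_0},\alpha)$ with $\alpha<\alpha_0$, i.e. $\delta>\delta_0$, I would partition $(\lambda-\delta,\lambda+\delta)$ into $O(\delta/\delta_0)$ subintervals of length $\sim\delta_0$, write $\Phi_{\lambda,\delta}$ as the sum of the $O(\delta/\delta_0)$ associated kernels $\Phi_{r_j,\delta_0}$ (all with $r_j\sim\lambda$), and conclude by the triangle inequality in $L^{p_0}$: the hypothesis bounds each summand by $\lesssim\lambda^{1-\frac 2{p_0}}\delta_0$, so the sum is $\lesssim\frac{\delta}{\delta_0}\lambda^{1-\frac 2{p_0}}\delta_0=\lambda^{1-\frac 2{p_0}}\delta$. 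Composing the two moves handles an arbitrary point of the rectangle.

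For part $(ii)$, being above the red curve at $(\frac 1{p_0},\alpha_0)$ means that both the conjectured and (by hypothesis) the verified bound there is $(\lambda\delta)^{\frac 12}$, and $(\frac 1p,\alpha_0)$ stays above the red curve for all $2\le p\le p_0$, so $(\lambda\delta)^{\frac 12}$ remains the target on the whole segment. I would simply interpolate: by Parseval and Lemma~\ref{lemmacounting1}, $\|\Phi_{\lambda,\delta}\|_{L^2}=(\#\mathcal{A}_{\lambda,\delta}')^{\frac 12}\lesssim_\epsilon\lambda^\epsilon(\lambda\delta)^{\frac 12}$, while $\|\Phi_{\lambda,\delta}\|_{L^{p_0}}\lesssim_\epsilon\lambda^\epsilon(\lambda\delta)^{\frac 12}$ by assumption; since $[0,1]^2$ carries a probability measure, Hölder's inequality gives $\|\Phi_{\lambda,\delta}\|_{L^p}\le\|\Phi_{\lambda,\delta}\|_{L^2}^{1-\theta}\|\Phi_{\lambda,\delta}\|_{L^{p_0}}^{\theta}$ with $\frac 1p=\frac{1-\theta}2+\frac\theta{p_0}$, and the right-hand side is $\lesssim_\epsilon\lambda^\epsilon(\lambda\delta)^{\frac 12}$, as desired.

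None of the steps is substantial; the one place deserving a word of care is the partition step in part $(i)$, where the hypothesis --- stated for $\mathcal{A}_{\lambda,\delta}$ --- is applied to the sub-annuli $\mathcal{A}_{r_j,\delta_0}$ with slightly shifted centers. This is legitimate because the hypothesis is understood to hold uniformly over centers of size $\sim\lambda$ (and $\delta_0\sim r_j^{-\alpha_0}$ with $\alpha_0<1$ keeps each sub-annulus at essentially the point $(\frac 1{p_0},\alpha_0)$), while passing from open to half-open annuli changes $\Phi$ only by a trigonometric polynomial supported on $O(\lambda^\epsilon)$ frequencies (divisor bound in $\mathbb{Z}[i]$), whose $L^{p_0}$ norm is negligible compared with $\lambda^{1-\frac 2{p_0}}\delta_0$.
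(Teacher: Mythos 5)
Your proof is correct and follows the same route as the paper: part (i) via Bernstein's inequality together with decomposing $\Phi_{\lambda,\delta}$ into $O(\delta/\delta_0)$ kernels of type $\Phi_{\cdot,\delta_0}$, and part (ii) via interpolation with the $L^2$ bound $\|\Phi_{\lambda,\delta}\|_{L^2}\lesssim\lambda^\epsilon(\lambda\delta)^{1/2}$. The paper states only this outline (referring back to Lemma~\ref{hirondelle}); you have supplied the same steps with full details, including the careful remarks on uniformity in the center $r_j\sim\lambda$ and on boundary lattice points, which are reasonable and do not change the substance.
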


\begin{proof} This is very similar to Lemma~\ref{hirondelle}. For $(i)$, we use  Bernstein's inequality and the fact that, if $\delta_0 < \delta$, then $\Phi_{\lambda,\delta}$ can be written as the sum of $O(\frac{\delta}{\delta_0})$ functions of the type $\Phi_{\lambda,\delta_0}$. For $(ii)$, it suffices to interpolate with the trivial bound $\| \Phi_{\lambda,\delta} \|_{L^2} \lesssim \lambda^\epsilon (\lambda \delta)^{\frac 12}$.
\end{proof}

\bibliographystyle{abbrv}
\bibliography{references}

\end{document}